\newtheorem{thm}{Theorem}
\newtheorem{cor}[thm]{Corollary}
\newtheorem{lem}[thm]{Lemma}
\newtheorem{prop}[thm]{Proposition}
\newtheorem{defn}[thm]{Definition}
\newtheorem{ex}[thm]{Example}
\newtheorem{rem}[thm]{Remark}
\def\A{\mathcal{A}}
\def\ev{\textup{ev}}
\def\odd{\textup{odd}}
\def\W{\mathcal{W}}
\def\Wsn{\W^{s,n}}
\def\C{\mathbb{C}}
\def\cs{\mathrm{cs}}
\def\E{\mathcal{E}}
\def\H{\mathcal{H}}
\def\R{\mathbb{R}}
\DeclareMathOperator{\tr}{Tr}
\DeclareMathOperator{\Tr}{Tr}
\def\S{\mathcal{S}}
\def\O{\mathcal{O}}
\def\mB{\mathcal{B}}
\renewcommand{\L}{\mathcal{L}}
\newcommand{\br}[1]{\langle#1\rangle}
\newcommand{\braket}[2]{\left\langle #1 \middle| #2 \right\rangle}
\newcommand{\vect}[2]{\begin{pmatrix}#1\\#2\end{pmatrix}}
\newcommand{\Esg}{\mathcal{E}^{s,\gamma}}
\newcommand{\N}{\mathbb N}
\newcommand{\nrm}[2]{\left\|#1\right\|_{#2}}
\newcommand{\norm}[1]{\left\|#1\right\|}
\newcommand{\supnorm}[1]{\norm{ #1 }_\infty}
\newcommand{\absnorm}[1]{\norm{ #1 }_1}
\newcommand{\sa}{\textnormal{sa}}
\newcommand{\Tfn}{T_{f^{[n]}}}
\DeclareMathOperator{\exterior}{\@ifnextchar^\@exterior{\@exterior^{}}}
\def\@exterior^#1{\mathop{\bigwedge\nolimits^{\!#1}}}
\title{Cyclic cocycles in the spectral action}
\author{Teun D.H. van Nuland\footnote{t.vannuland@math.ru.nl},\qquad Walter D. van Suijlekom\footnote{waltervs@math.ru.nl}\\
	Radboud University, Heyendaalseweg 135,\\
	6525 AJ Nijmegen, The Netherlands}
\begin{document}

\maketitle

\begin{abstract}
	We show that the spectral action, when perturbed by a gauge potential, can be written as a series of Chern--Simons actions and Yang--Mills actions of all orders. In the odd orders, generalized Chern--Simons forms are integrated against an odd $(b,B)$-cocycle, whereas, in the even orders, powers of the curvature are integrated against $(b,B)$-cocycles that are Hochschild cocycles as well. In both cases, the Hochschild cochains are derived from the Taylor series expansion of the spectral action $\tr(f(D+V))$ in powers of $V=\pi_D(A)$, but unlike the Taylor expansion we expand in increasing order of the forms in $A$. This extends \cite{CC06}, which computes only the scale-invariant part of the spectral action, works in dimension at most 4, and assumes the vanishing tadpole hypothesis. In our situation, we obtain a truly infinite odd $(b,B)$-cocycle. The analysis involved draws from recent results in multiple operator integration, which also allows us to give conditions under which this cocycle is entire, and under which our expansion is absolutely convergent. As a consequence of our expansion and of the gauge invariance of the spectral action, we show that the odd $(b,B)$-cocycle pairs trivially with $K_1$.
\end{abstract}

\tableofcontents 
\section{Introduction}
The spectral action \cite{CC96,CC97} is one of the key instruments in the applications of noncommutative geometry to particle physics. With inner fluctuations \cite{C96} of a noncommutative manifold playing the role of gauge potentials, the spectral action principle yields Lagrangians for them. In fact, the asymptotic behavior of the spectral action for small momenta leads in this way to local field theories. For example, this provides the simplest way known to geometrically explain the dynamics and interactions of the gauge bosons and the Higgs boson in the Standard Model Lagrangian as an effective field theory \cite{CCM07} (see also the textbooks \cite{CM07,Sui14}). These techniques extend to more general noncommutative manifolds, and, indeed, if one restrict to the scale-invariant part, one may naturally identify a Yang--Mills term and a Chern--Simons term to elegantly appear in the spectral action \cite{CC06}. However, it remains an open problem to determine the general form of the spectral action without resorting to the scale-invariant part, even though from the physical perspective there is a strong desire to go beyond this effective field theory approach.

Motivated by this, we study the spectral action when it is expanded in terms of inner fluctuations associated to an arbitrary noncommutative manifold, without resorting to heat-kernel techniques. Indeed, the latter are not always available and an understanding of the full spectral action could provide deeper insight in the origin of these gauge theories from noncommutative geometry. Let us now give a more precise description of our setup.

We let $(\A,\H,D)$ be an $s$-summable spectral triple ({\em cf.} Definition \ref{defn:st} below). If $f : \R \to \C$ is a suitably nice function we may define the spectral action \cite{CC97}:
$$
\tr (f(D)).
$$
An inner fluctuation, as explained in \cite{C96}, is given by a Hermitian universal one-form
\begin{align}\label{eq:A uitgeschreven}
	A=\sum_{j=1}^n a_jdb_j\in\Omega^1(\A),
\end{align}
for elements $a_j,b_j\in\A$. The terminology `fluctuation' comes from representing $A$ on $\H$ as
\begin{align}\label{eq:V uitgeschreven}
	V:=\pi_D(A):=\sum_{j=1}^n a_j[D,b_j]\in\mB(\H)_\sa,
\end{align}
and fluctuating $D$ to $D+V$ in the spectral action.
 %
The variation of the spectral action under the inner fluctuation is then given by
\begin{align}\label{variation of SA}
	\tr(f(D+V))-\tr(f(D)).
\end{align}
As spectral triples can be understood as noncommutative spin manifolds encoding the gauge fields as an inner structure, one could hope that perturbations of the spectral action could be understood in terms of noncommutative versions of geometrical, gauge theoretical concepts. Hence we would like to express \eqref{variation of SA} in terms of universal forms constructed from $A$. To express an action functional in terms of universal forms, one is naturally led to cyclic cohomology. As it turns out, hidden inside the spectral action we will identify an odd $(b,B)$-cocycle $(\tilde\psi_1,\tilde\psi_3,\ldots)$ and an even $(b,B)$-cocycle $(\phi_2,\phi_4,\ldots)$ for which $b\phi_{2k}=B\phi_{2k}=0$, i.e., each Hochschild cochain $\phi_{2k}$ forms its own $(b,B)$-cocycle $(0,\ldots,0,\phi_{2k},0,\ldots)$. On the other hand, the odd $(b,B)$-cocycle $(\tilde\psi_{2k+1})$ is truly infinite. 

 The main result of this paper is that for suitable $f:\R\to\C$ we may expand
\begin{align}\label{eq:expansion intro}
	\tr(f(D+V)-f(D))=\sum_{k=1}^\infty\left(\int_{\psi_{2k-1}}\cs_{2k-1}(A)+\frac{1}{2k}\int_{\phi_{2k}}F^{k}\right),
\end{align}
	in which the series converges absolutely. Here $\psi_{2k-1}$ is a scalar multiple of $\tilde\psi_{2k-1}$, $F_t=tdA+t^2A^2$, so that $F=F_1$ is the curvature of $A$, and $\cs_{2k-1}(A)=\int_0^1 AF_t^{k-1}dt$ is a generalized noncommutative Chern--Simons form. We also give a bound on the remainder of this expansion.

As already mentioned, a similar result was shown earlier to hold for the scale-invariant part of the spectral action. Indeed,  Connes and Chamseddine \cite{CC06} expressed the scale-invariant part in dimension $\leq 4 $ as
\begin{equation*}
\zeta_{D+V}(0) - \zeta_D(0) = - \frac 1 4\int_{\tau_0} (dA+A^2) + \frac 12 \int_\psi \left(A d A + \frac 2 3 A^3\right),
\end{equation*}
for a certain Hochschild 4-cocycle $\tau_0$ and cyclic 3-cocycle $\psi$.

Interestingly, a key role in our extension of this result to the full spectral action will be played by multiple operator integrals. It is the natural replacement of residues in this context, and also allows to go beyond dimension $4$.
For our analysis of the cocycle structure that appears in the full spectral action we take the Taylor series expansion as a starting point. 
This explains the appearance of multiple operator integrals, as traces thereof are multilinear extensions of the derivatives of the spectral action. This viewpoint is also studied in \cite{Skr13,Sui11}, where multiple operator integrals are used to investigate the Taylor expansion of the spectral action. As we will show, multiple operator integrals can also be used to define cyclic cocycles, because of some known properties of the multiple operator integral that have been proved in increasing generality in the last decades (e.g., in \cite{ACDS09,CS18,vNS21,Skr13,Sui11}). We push these results a bit further, in Section \ref{sct:Function spaces}, by proving estimates and continuity properties for the multiple operator integral when the self-adjoint operator has an $s$-summable resolvent. Extending these results to the case of non-unital spectral triples is left open for future research.


These techniques also allow us to show that the found $(b,B)$-cocycles are {\em entire} in the sense of \cite{C88a}. This makes it meaningful to analyze their pairing with K-theory, which we find to be trivial in Section \ref{sct:vanishing pairing}.

\subsection*{Conventions}
Throughout, we fix a separable Hilbert space $\H$. If we say $D$ is self-adjoint in $\H$, it is possibly unbounded and self-adjoint with a domain dense in $\H$. We let $\mB(\H)$ be the C*-algebra of bounded operators on $\H$. For $M\subseteq\mB(\H)$, $M_\sa$ denotes the self-adjoint elements in $M$. We denote by $\mathcal{S}^s\subseteq\mB(\H)$ the Schatten class of $s$-summable operators, for $s\in\N=\{1,2,\ldots\}$.
We write $\N_0=\{0,1,2,\ldots\}$.

\paragraph{Universal forms}
When considering a spectral triple $(\A,\H,D)$, we write $\Omega^\bullet(\A)=\oplus_{n\in\N_0}\Omega^n(\A)$ for the universal differential graded algebra over $\A=:\Omega^0(\A)$, endowed with grading $d$. We write $\Omega^1_D(\A):=\pi_D(\Omega^1(\A))$ where $\pi_D:\Omega^1(\A)\to\mB(\H)$ is defined by \eqref{eq:A uitgeschreven} and \eqref{eq:V uitgeschreven}. Whenever $A\in\Omega^1(\A)$, we write $F:=dA+A^2\in\Omega^2(\A)$ for the curvature of $A$.

\paragraph{Functions} We write $C^n$ for the space of $n$ times continuously differentiable functions, $C:=C^0$, $C_0$ for the space of continuous functions vanishing at infinity, $C^n_c$ for the space of compactly supported functions in $C^n$, $\mathcal{D}:=C_c^\infty$ for the space of smooth compactly supported functions, $\S$ for the Schwartz space, and $L^p$ for the Lebesgue spaces ($p\in[1,\infty]$). All these functions are defined on $\R$ and have values in $\C$. We define $u(x):=x-i$, and write $u^{-1}$ for the multiplicative inverse of $u\in C$. 

\paragraph{Fourier transforms}We define the Fourier transform of an integrable function $f$ on $\R$ by $\hat{f}(x):=\int_\R \frac{dy}{2\pi}f(y)e^{-iyx}.$ For a general (not necessarily tempered) distribution $f\in\mathcal{D}'$ we can still define the Fourier transform as a distribution $\hat{f}:\hat{\mathcal D}\to\C$ by $\langle \hat{f}|\varphi\rangle:=\braket{f}{\hat{\varphi}}$ for all Schwartz functions $\varphi$ with $\hat{\varphi}\in\mathcal D$. The restriction $~\hat{~}:\S'\to\S'$ is bijective with inverse denoted by $\check{~}:\S'\to\S'$. 

This more general definition of the Fourier transform will be applied in the following way. For an arbitrary continuous function $f$ (which is in $\mathcal D'$ but not a priori in $\S'$) we will often assume that $\hat{f}\in L^1$. Because then $\hat{f}\in\S'$ and $\check{\hat{f}}\in C_0$ by the Riemann-Lebesgue lemma, we find that $\hat{\check{\hat{f}}}=\hat{f}$, which implies $\braket{f}{\varphi}=\langle\check{\hat{f}}|\varphi\rangle$ for all $\varphi\in\mathcal D$. Therefore $f=\check{\hat{f}}\in C_0$ and $f(x)=\int \hat{f}(y)e^{ixy}\,dy$.

\section{Preliminaries on multiple operator integrals}
In this section, we introduce the subject of multiple operator integration in a sufficiently light way to fit our purposes. For a more extensive discussion, we refer to \cite{ACDS09,PSS13,ST19}. 

For every $n\in\N_0$ and $f\in C^n$, define the divided difference $f^{[n]}$ recursively as follows:
\begin{align*}
	f^{[0]}(x_0)&:=f(x_0)\\
	f^{[n+1]}(x_0,\ldots,x_{n+1})&:=\begin{cases}\frac{f^{[n]}(x_0,\ldots,x_{n})-f^{[n]}(x_0,\ldots,x_{n-1},x_{n+1})}{x_n-x_{n+1}}&\text{if }x_n\neq x_{n+1}\\
	\frac{\partial}{\partial x_n} f^{[n]}(x_0,\ldots,x_n)&\text{if }x_n=x_{n+1}.
	\end{cases}
\end{align*}
 Let $\sigma$ denote the standard measure on the $n$-simplex, $$\Delta_n:=\left\{(s_0,\ldots,s_n)\in\R^{n+1}_{\geq0}:~ \sum_{j=0}^n s_j=1\right\}.$$  In order to define a multilinear operator integral, we need the following lemma.
\begin{lem}\label{lem:divided diff}
Whenever $f\in C^n$ is such that $\widehat{f^{(n)}}\in L^1$, we can write $f^{[n]}$ as
	$$f^{[n]}(x_0,\ldots,x_n)=\int_{\Delta_n}\int_\R e^{its_0x_0}\cdots e^{its_nx_n}\widehat{f^{(n)}}(t)\,dt\,d\sigma(s_0,\ldots,s_n).$$
%
\end{lem}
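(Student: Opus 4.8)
The plan is to factor the identity into two independent pieces: a real-variable integral representation of the divided difference that holds for every $f\in C^n$, and the pointwise Fourier inversion of $f^{(n)}$ that the hypothesis $\widehat{f^{(n)}}\in L^1$ makes available.

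First I would record the Hermite--Genocchi formula
$$f^{[n]}(x_0,\ldots,x_n)=\int_{\Delta_n}f^{(n)}(s_0x_0+\cdots+s_nx_n)\,d\sigma(s_0,\ldots,s_n),$$
where $\sigma$ is the measure given by $d\sigma=ds_1\cdots ds_n$ in the coordinates $s_0=1-\sum_{i\geq1}s_i$, so that $\sigma(\Delta_n)=1/n!$; this is exactly the normalization for which the cases $n=0$ and $n=1$ reduce to $f(x_0)$ and to the difference quotient $\tfrac{f(x_1)-f(x_0)}{x_1-x_0}$. I would prove this by induction on $n$ directly from the recursive definition of $f^{[n]}$. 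The base case $n=0$ is immediate. In the inductive step one integrates out the last barycentric coordinate of $\Delta_{n+1}$ and applies the fundamental theorem of calculus, which converts the integral of $f^{(n+1)}$ into a difference of two integrals of $f^{(n)}$ over $\Delta_n$, one carrying the node $x_n$ and the other the node $x_{n+1}$ alongside the common nodes $x_0,\ldots,x_{n-1}$; dividing by $x_n-x_{n+1}$ reproduces the recursion, and the coincidence case $x_n=x_{n+1}$ follows by continuity, matching the derivative branch of the recursion.

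Second, since $f\in C^n$ the derivative $f^{(n)}$ is continuous, so with $\widehat{f^{(n)}}\in L^1$ we are precisely in the setting of the inversion statement recorded among the conventions: $f^{(n)}\in C_0$ and $f^{(n)}(y)=\int_\R\widehat{f^{(n)}}(t)e^{ity}\,dt$ for all $y$. Substituting $y=s_0x_0+\cdots+s_nx_n$ and factoring $e^{ity}=e^{its_0x_0}\cdots e^{its_nx_n}$ into the Hermite--Genocchi formula yields the claimed expression, once the order of integration is exchanged. This exchange is justified by Fubini: since $|e^{its_jx_j}|=1$ and $\sigma(\Delta_n)<\infty$, the integrand is dominated in absolute value by $|\widehat{f^{(n)}}(t)|$, which is integrable on $\Delta_n\times\R$.

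The only genuine content is the Hermite--Genocchi step, and the hard part there is the inductive bookkeeping of the barycentric integration together with the coincident-node case; the remaining inversion-and-Fubini argument is routine given the conventions. Alternatively, one can run the induction on the claimed formula itself: the simplex integral $\int_{\Delta_n}e^{it\sum_j s_jx_j}\,d\sigma(s)$ equals the divided difference of $\exp$ at the nodes $itx_0,\ldots,itx_n$ and hence satisfies the divided-difference recursion in the $x_j$, after which the relation $\widehat{f^{(n+1)}}(t)=it\,\widehat{f^{(n)}}(t)$ transfers this recursion to the full integral with the factors of $it$ cancelling. That route is self-contained but must additionally track integrability of the lower-order transforms, which is why I would prefer the cleaner factorization above.
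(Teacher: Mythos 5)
Your proof is correct and takes essentially the same route as the paper: the paper's proof simply cites the combination of \cite[Lemmas 5.1 and 5.2]{PSS13}, which are respectively the Hermite--Genocchi simplex representation and the Fourier-inversion-plus-Fubini step that you carry out explicitly (with the same normalization $\sigma(\Delta_n)=1/n!$ and the same inversion convention recorded in the paper's preliminaries). Nothing further is needed.
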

\begin{proof}
One simply combines the proofs of \cite[Lemma 5.1]{PSS13} and \cite[Lemma 5.2]{PSS13}.
\end{proof}
It is easily seen that $(\Sigma,\sigma_f):=(\Delta_n\times\R,\sigma\times \widehat{f^{(n)}})$ is a finite measure space with total variation equal to $\tfrac{1}{n!}\|\widehat{f^{(n)}}\|_1$.


\begin{defn}
	Let $D_0\ldots,D_n$ be self-adjoint in $\H$ and let  $f \in C^{n}$ such that $\widehat{f^{(n)}} \in L^1$. The {\em multiple operator integral} $\Tfn^{D_0,\ldots,D_n}:\mB(\H)^{\times n}\to\mB(\H)$ is defined on $V_1,\ldots,V_n\in\mB(\H)$ by
\begin{align}\label{def:Tfn}
	\Tfn^{D_0,\ldots,D_n}(V_1,\ldots,V_n)y:=\int_{\Delta_n}\int_\R   e^{its_0D_0}V_1e^{its_1D_1}\cdots V_n e^{its_nD_n}y\:\widehat{f^{(n)}}(t)\:dt\:d\sigma(s_0,\ldots,s_n),
\end{align}
for $y\in\H$. In the case that $D_j=D$ for all $j$, we also write $\Tfn^D:=\Tfn^{D,\ldots,D}$.
\end{defn}
This is a special case of the operator integral $T_\phi$ in \cite[Definition 4.1]{ACDS09} for the function $\phi=f^{[n]}$.
We use the properties of $T_\phi$ proven there, e.g. we use \cite[Lemma 4.3]{ACDS09}, to conclude that \eqref{def:Tfn} does not depend on the representation of $f^{[n]}$ as given by Lemma \ref{lem:divided diff}. We also note that \eqref{def:Tfn} in particular implies $T^D_{f^{[0]}}=f(D)$. 

An important property of the multiple operator integral $\Tfn^D$ is that it allows us to compute a higher-order derivative of a trace functional like the spectral action:
\begin{align}\label{eq:nth derivative of SA}
	\frac{1}{n!}\frac{d^n}{dt^n}\tr(f(D+tV))\big|_{t=0}=\tr(\Tfn^D(V,\ldots,V)),
\end{align}
see for instance \cite[Theorem 5.7]{ACDS09}. Under other assumptions, this formula is proven in \cite[Theorem 5.3.5]{ST19}, \cite[Theorem 3.12]{vNS21} and \cite[equation (5.30)]{PSS13}. In other words, multiple operator integrals are multilinear extensions of higher-order derivatives of the spectral action, which is why they are of interest to us. We write the Taylor series expansion of the spectral action as
\begin{align}\label{eq:Taylor}
	\tr(f(D+V))\sim\sum_{n=0}^\infty \tr(\Tfn^D(V,\ldots,V)).
\end{align}
  The remainder of this Taylor series can be written in terms of a single multiple operator integral:
  \begin{prop}
    \label{prop:Taylor}
For any $K \in\N_0$, let $f \in C^{K+1}$ be such that $\widehat{f^{(n)}} \in L^1$ for $n\in\{0,\ldots,K+1\}$. Then 
\begin{align}\label{eq:remainder}
  f(D+V)-f(D)-\sum_{n=1}^{K}\frac{1}{n!}\frac{d^n}{dt^n}f(D+tV)\Big|_{t=0}=T^{D+V,D,\ldots,D}_{f^{[K+1]}}(V,\ldots,V).
\end{align}

  \end{prop}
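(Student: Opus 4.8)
The plan is to prove the identity by induction on $K$, built on a one-step recursion that trades one order of the Taylor polynomial for one extra order in the remainder multiple operator integral. Since $V\in\mB(\H)_\sa$ is bounded, $D+V$ is self-adjoint on the domain of $D$, so every exponential $e^{its(D+V)}$ occurring below is unitary; combined with the finiteness of the measure space recorded after Lemma~\ref{lem:divided diff} (total variation $\tfrac1{n!}\|\widehat{f^{(n)}}\|_1$), this guarantees that all the integrals I manipulate converge absolutely.

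First I would check the base case $K=0$, i.e.\ the first-order perturbation formula $f(D+V)-f(D)=T^{D+V,D}_{f^{[1]}}(V)$. Writing $f(D+V)-f(D)=\int_\R\widehat{f}(t)\bigl(e^{it(D+V)}-e^{itD}\bigr)\,dt$ and inserting Duhamel's formula $e^{it(D+V)}-e^{itD}=it\int_0^1 e^{its(D+V)}Ve^{it(1-s)D}\,ds$, the identity $\widehat{f'}(t)=it\,\widehat{f}(t)$ turns the right-hand side into $\int_\R\int_0^1\widehat{f'}(t)\,e^{its(D+V)}Ve^{it(1-s)D}\,ds\,dt$, which is exactly $T^{D+V,D}_{f^{[1]}}(V)$ once the $s$-integral is read as integration over $\Delta_1$.

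Next I would establish the general recursion, for $0\le n\le K$,
\[
T^{D+V,D,\ldots,D}_{f^{[n]}}(V,\ldots,V)=\Tfn^D(V,\ldots,V)+T^{D+V,D,\ldots,D}_{f^{[n+1]}}(V,\ldots,V),
\]
by the same mechanism applied only to the leading exponential. Starting from the integral representation I split $e^{its_0(D+V)}=e^{its_0D}+its_0\int_0^1 e^{its_0(1-r)(D+V)}Ve^{its_0rD}\,dr$; the first summand reproduces $\Tfn^D(V,\ldots,V)$, while for the second I perform the simplex change of variables $\sigma_0=s_0(1-r)$, $\sigma_1=s_0r$, $\sigma_j=s_{j-1}$ ($2\le j\le n+1$), whose Jacobian equals $s_0$ and which maps $\Delta_n\times[0,1]$ onto $\Delta_{n+1}$; together with $\widehat{f^{(n+1)}}(t)=it\,\widehat{f^{(n)}}(t)$ this reassembles exactly $T^{D+V,D,\ldots,D}_{f^{[n+1]}}(V,\ldots,V)$. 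Telescoping from $n=0$, where $T^{D+V}_{f^{[0]}}=f(D+V)$ and $\Tfn^D$ reads $f(D)$, up to $n=K$ yields
\[
f(D+V)-f(D)=\sum_{n=1}^{K}\Tfn^D(V,\ldots,V)+T^{D+V,D,\ldots,D}_{f^{[K+1]}}(V,\ldots,V),
\]
and the operator-valued form of \eqref{eq:nth derivative of SA} rewrites each term $\Tfn^D(V,\ldots,V)$ as $\tfrac1{n!}\tfrac{d^n}{dt^n}f(D+tV)\big|_{t=0}$, giving the statement.

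The step I expect to be the main obstacle is the rigorous justification of the recursion rather than its bookkeeping: one must legitimately insert Duhamel's formula underneath the combined simplex-and-Fourier integral and interchange the orders of integration. This is exactly where the hypotheses $\widehat{f^{(n)}},\widehat{f^{(n+1)}}\in L^1$ enter, ensuring through the finite total variation $\tfrac1{n!}\|\widehat{f^{(n)}}\|_1$ that every iterated integrand is absolutely integrable uniformly in the unitaries, so that Fubini applies. A smaller point to handle with care is that \eqref{eq:nth derivative of SA} is stated at the level of traces, whereas I need its operator-valued counterpart; I would either invoke the operator-level higher-derivative formula directly, or, to remain self-contained, apply the recursion with $tV$ in place of $V$, extract the factor $t^n$ by $n$-linearity of $\Tfn^D$, and read off the Taylor coefficients by differentiating at $t=0$ after verifying that the remainder is $O(t^{K+1})$ in operator norm.
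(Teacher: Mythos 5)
Your proof is correct and is essentially the argument the paper relies on: the paper proves Proposition~\ref{prop:Taylor} by citing \cite[Theorem 3.13]{vNS21}, and the proof there is precisely your Duhamel-based one-step recursion $T^{D+V,D,\ldots,D}_{f^{[n]}}(V,\ldots,V)=T^{D}_{f^{[n]}}(V,\ldots,V)+T^{D+V,D,\ldots,D}_{f^{[n+1]}}(V,\ldots,V)$ (perturbation formula), telescoped from $n=0$ to $K$ and combined with the operator-level derivative formula $\frac{1}{n!}\frac{d^n}{dt^n}f(D+tV)\big|_{t=0}=T^{D}_{f^{[n]}}(V,\ldots,V)$. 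For that last identification, do take your first option of invoking the operator-level formula (as in \cite[Theorem 3.12]{vNS21} or \cite{ST19}), since your self-contained fallback is not quite sufficient as stated: an $O(t^{K+1})$ operator-norm approximation by a polynomial does not by itself guarantee that the derivatives at $t=0$ exist, so one would instead have to run your recursion at an arbitrary base point $D+tV$ and establish differentiability inductively.
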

  \proof
The proof of \cite[Theorem 3.13]{vNS21} exactly goes through for the stated class of functions.
 \endproof
The following lemma is an adaptation of \cite[Lemma 4.6]{ACDS09}. We give its proof for convenience of the reader.

\begin{lem}\label{lem:T(V1,..,Vp) Schatten bound}
	Fix $n\in\N_0$ and $f\in C^n$ such that $\widehat{f^{(n)}}\in L^1$, and let $D_0,\ldots,D_n$ be self-adjoint in $\H$. Suppose $\alpha,\alpha_j\in[1,\infty]$ with $\frac{1}{\alpha}=\frac{1}{\alpha_1}+\ldots+\frac{1}{\alpha_n}$, and let $V_j\in\S^{\alpha_j}$, where we set $\S^\infty=B(\H)$. We then have
		$$\nrm{\Tfn^{D_0,\ldots,D_n}(V_1,\ldots,V_n)}{\alpha}\leq \frac{1}{n!}\absnorm{\widehat{f^{(n)}}}\nrm{V_1}{\alpha_1}\cdots\nrm{V_n}{\alpha_n}.$$
\end{lem}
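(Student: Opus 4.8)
The plan is to bound the operator-valued integrand in \eqref{def:Tfn} pointwise in the Schatten-$\alpha$ norm and then integrate against the finite measure $\sigma_f=\sigma\times\widehat{f^{(n)}}$. Fix $(s_0,\ldots,s_n)\in\Delta_n$ and $t\in\R$. Since each $D_j$ is self-adjoint, every factor $e^{its_jD_j}$ is a unitary and hence has operator norm $1$. The noncommutative Hölder inequality for Schatten norms, which states that $\nrm{A_1\cdots A_m}{\alpha}\leq\nrm{A_1}{\beta_1}\cdots\nrm{A_m}{\beta_m}$ whenever $\frac{1}{\alpha}=\sum_i\frac{1}{\beta_i}$, then yields
$$
\nrm{e^{its_0D_0}V_1e^{its_1D_1}\cdots V_ne^{its_nD_n}}{\alpha}\leq\nrm{V_1}{\alpha_1}\cdots\nrm{V_n}{\alpha_n},
$$
where the unitary factors, assigned exponent $\infty$, each contribute a factor $1$ and leave the balance $\frac{1}{\alpha}=\sum_j\frac{1}{\alpha_j}$ intact.

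Next I would pass the Schatten norm through the integral by the triangle inequality. Treating \eqref{def:Tfn} as a Bochner integral of the $\S^\alpha$-valued integrand (weak-$*$ valued when $\alpha=\infty$) against the complex measure $\sigma_f$, one obtains
$$
\nrm{\Tfn^{D_0,\ldots,D_n}(V_1,\ldots,V_n)}{\alpha}\leq\int_{\Delta_n}\int_\R\nrm{V_1}{\alpha_1}\cdots\nrm{V_n}{\alpha_n}\,|\widehat{f^{(n)}}(t)|\,dt\,d\sigma(s).
$$
Since the integrand on the right is constant in $(s,t)$, this integral evaluates to $\nrm{V_1}{\alpha_1}\cdots\nrm{V_n}{\alpha_n}$ times the total variation $|\sigma_f|(\Delta_n\times\R)=\tfrac{1}{n!}\absnorm{\widehat{f^{(n)}}}$ recorded just before the definition, which is exactly the claimed bound.

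The one point requiring care is the legitimacy of moving the norm inside the integral, i.e.\ making the integral rigorous as an $\S^\alpha$-valued object and verifying measurability of the map $(s,t)\mapsto e^{its_0D_0}V_1\cdots V_ne^{its_nD_n}$. For $\alpha<\infty$ I would check strong (hence Bochner) measurability from the strong continuity of each one-parameter unitary group $t\mapsto e^{itD_j}$ together with joint continuity of multiplication on bounded sets, after which Bochner's inequality $\norm{\int g\,d\mu}\leq\int\norm{g}\,d|\mu|$ applies directly. For the borderline case $\alpha=\infty$, and to treat all cases uniformly, I would instead invoke the duality $\nrm{T}{\alpha}=\sup\{|\tr(TW)|:W\in\S^{\alpha'},\,\nrm{W}{\alpha'}\leq1\}$, pair \eqref{def:Tfn} against such a $W$, move the resulting scalar integral outside, and bound $|\tr(e^{its_0D_0}V_1\cdots V_ne^{its_nD_n}W)|\leq\nrm{V_1}{\alpha_1}\cdots\nrm{V_n}{\alpha_n}\nrm{W}{\alpha'}$ by the tracial Hölder inequality. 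Either route reduces to the same elementary estimate; as this lemma is an adaptation of \cite[Lemma 4.6]{ACDS09}, the measurability and integrability facts may be carried over from there.
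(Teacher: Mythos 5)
Your proposal is correct and takes essentially the same approach as the paper: the identical pointwise H\"older estimate on $A_{s,t}=e^{its_0D_0}V_1e^{its_1D_1}\cdots V_ne^{its_nD_n}$ combined with the total variation $\tfrac{1}{n!}\absnorm{\widehat{f^{(n)}}}$ of $\sigma_f$. Your duality fallback is precisely the paper's argument: it pairs the operator integral against $B\in\S^{\alpha'}$, interchanges trace and integral via \cite[Lemma 3.10]{ACDS09}, and recovers the Schatten norm by duality, treating $\alpha=1$ (via polar decomposition) and $\alpha=\infty$ (via the triangle inequality for the weak integral) as separate cases.
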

\begin{proof}
	Assume that $\alpha<\infty$. Define
		$$A_{s,t}:=e^{its_0D_0}V_1e^{its_1D_1}\cdots V_ne^{its_nD_n},$$
	for all $(s,t)\in\Delta_n\times\R= \Sigma$. Using H\"older's inequality 
	we find that $A_{s,t}\in\S^\alpha$ and
		$$\nrm{A_{s,t}}{\alpha}\leq\nrm{V_1}{\alpha_1}\cdots\nrm{V_n}{\alpha_n}.$$
	Let $B\in\S^{\alpha'}$ be arbitrary, for $\alpha'$ the H\"older conjugate exponent of $\alpha$. Define
		$$g(s,t):=A_{s,t}B,$$
	and notice that $g:\Sigma\to\S^1$ is uniformly $\S^1$-bounded. For all $j\leq n$, the map $(s,t)\mapsto e^{it s_jD_j}$ is strongly continuous, and therefore $g:\Sigma\to\S^1$ is so*-continuous. 
	By \cite[Lemma 3.10]{ACDS09}, we obtain
	\begin{align*}
		\left|\Tr(\Tfn^{D_0,\ldots,D_n}(V_1,\ldots,V_n)B)\right|&=\left|\int_{\Sigma}\Tr(A_{s,t}B)\:d\sigma_f(s,t)\right|\\
		&\leq \int_{\Sigma}\Tr|A_{s,t}B|\:d|\sigma_f|(s,t)\\
		&\leq \nrm{V_1}{\alpha_1}\cdots\nrm{V_n}{\alpha_n}\nrm{B}{\alpha'}\frac{1}{n!}\supnorm{\widehat{f^{(n)}}}.
	\end{align*}
	When $\alpha=1$, simply taking a unitary $B$ such that $\Tfn^{D_0,\ldots,D_n}(V_1,\ldots,V_n)B=|\Tfn^{D_0,\ldots,D_n}(V_1,\ldots,V_n)|$ yields the result. If $\alpha\in(1,\infty)$, then an application of $(\S^{\alpha'})^*=\S^{\alpha}$ 
	gives the result. If $\alpha=\infty$, one obtains the lemma directly by the triangle inequality $\norm{\int A_{s,t}}\leq\int \norm{A_{s,t}}$.
\end{proof}

Lemma \ref{lem:T(V1,..,Vp) Schatten bound} shows in particular that $T^{D_0,\ldots,D_n}_{f^{[n]}}:B(\H)^{\times n}\to B(\H)$ is $\norm{\cdot}$-continuous. When we replace $(B(\H),\norm{\cdot})$ by $(B(\H)_1,\text{s.o.t.})$, this is known to still hold true (see \cite[Proposition 4.9]{ACDS09}). These results can be unified and generalized by writing $\L^\alpha:=(\S^\alpha,\nrm{\cdot}{\alpha})$ for $\alpha\in[1,\infty)$ and $\L^\infty:=(B(\H)_1,\text{s.o.t.})$. We then have the following.
\begin{lem}\label{lem:continuity with alpha relation}
	Let $f\in C^n$ with $\widehat{f^{(n)}}\in L^1$ and let $\alpha,\alpha_j\in[1,\infty]$ with $\tfrac{1}{\alpha}=\tfrac{1}{\alpha_1}+\ldots+\tfrac{1}{\alpha_n}$. Assume that either $\alpha_n<\infty$ or $\alpha_1=\ldots=\alpha_n=\infty$. Then
		$$T^{D_0,\ldots,D_n}_{f^{[n]}}:\L^{\alpha_1}\times\cdots\times\L^{\alpha_n}\to\L^\alpha$$
	is continuous.
\end{lem}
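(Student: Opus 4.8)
The plan is to establish sequential continuity, which is enough: since $\H$ is separable, the strong operator topology on the ball $B(\H)_1$ is metrizable, so $\L^{\alpha_1}\times\cdots\times\L^{\alpha_n}$ is metrizable and continuity coincides with sequential continuity. Thus I would fix sequences $V_j^{(m)}\to V_j$ in $\L^{\alpha_j}$, put $W_j^{(m)}:=V_j^{(m)}-V_j$, and use multilinearity of $\Tfn^{D_0,\ldots,D_n}$ to write the difference $\Tfn^{D_0,\ldots,D_n}(V_1^{(m)},\ldots,V_n^{(m)})-\Tfn^{D_0,\ldots,D_n}(V_1,\ldots,V_n)$ as the telescoping sum over $j$ of the terms $\Tfn^{D_0,\ldots,D_n}(V_1^{(m)},\ldots,V_{j-1}^{(m)},W_j^{(m)},V_{j+1},\ldots,V_n)$. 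It then suffices to show that each of these $n$ terms tends to $0$ in $\L^\alpha$; note the defining $\alpha$-relation is unchanged for each term, and every factor other than the $j$-th stays bounded in its norm (those with finite exponent because they converge, those with exponent $\infty$ because they lie in $B(\H)_1$).

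For an index with $\alpha_j<\infty$ this is immediate from Lemma \ref{lem:T(V1,..,Vp) Schatten bound}, since $\nrm{W_j^{(m)}}{\alpha_j}\to0$ while the bound there is multiplicative. The substance of the lemma lies in the indices with $\alpha_j=\infty$, for which $W_j^{(m)}\to0$ only strongly and the norm bound is worthless. Here the hypothesis enters: if the exponents are not all $\infty$, then $\alpha_n<\infty$, forcing $j<n$ and $\alpha<\infty$. Writing the integrand defining $\Tfn^{D_0,\ldots,D_n}$ as $L_{s,t}^{(m)}\,W_j^{(m)}\,R_{s,t}$ with $R_{s,t}:=e^{its_jD_j}V_{j+1}\cdots V_ne^{its_nD_n}$, the factor $V_n\in\S^{\alpha_n}$ makes $R_{s,t}\in\S^{\beta_R}$ for $\tfrac{1}{\beta_R}=\sum_{i>j,\,\alpha_i<\infty}\tfrac{1}{\alpha_i}\ge\tfrac{1}{\alpha_n}$, so $\beta_R<\infty$; correspondingly $L_{s,t}^{(m)}$ is bounded in $\nrm{\cdot}{\beta_L}$ uniformly in $m$ and $(s,t)$, with $\tfrac{1}{\beta_L}+\tfrac{1}{\beta_R}=\tfrac{1}{\alpha}$. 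The analytic key is the elementary fact that a bounded, strongly null sequence times a fixed Schatten operator is Schatten-norm null: if $\sup_m\norm{W^{(m)}}<\infty$, $W^{(m)}\to0$ strongly and $K\in\S^{p}$ with $p<\infty$, then $\nrm{W^{(m)}K}{p}\to0$, as one sees by approximating $K$ by finite-rank operators in $\nrm{\cdot}{p}$ and using strong nullity on their finitely many range vectors. H\"older's inequality then gives, for each fixed $(s,t)$, $\nrm{L_{s,t}^{(m)}W_j^{(m)}R_{s,t}}{\alpha}\le\nrm{L_{s,t}^{(m)}}{\beta_L}\nrm{W_j^{(m)}R_{s,t}}{\beta_R}\to0$, dominated uniformly by a constant multiple of $\sup_{s,t}\nrm{R_{s,t}}{\beta_R}<\infty$.

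To pass from this pointwise convergence to the integral I would pair against an arbitrary $B\in\S^{\alpha'}$ exactly as in Lemma \ref{lem:T(V1,..,Vp) Schatten bound}, so that $\bigl|\Tr(\Tfn^{D_0,\ldots,D_n}(\ldots,W_j^{(m)},\ldots)B)\bigr|\le\nrm{B}{\alpha'}\int_\Sigma\nrm{L_{s,t}^{(m)}W_j^{(m)}R_{s,t}}{\alpha}\,d|\sigma_f|$, and apply the dominated convergence theorem on the finite measure space $(\Sigma,|\sigma_f|)$; taking the supremum over $\nrm{B}{\alpha'}\le1$ and using $(\S^{\alpha'})^*=\S^\alpha$ (or, for $\alpha=1$, a supremum over contractions) yields convergence of the term to $0$ in $\nrm{\cdot}{\alpha}$. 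The remaining case $\alpha_1=\cdots=\alpha_n=\infty$, hence $\alpha=\infty$, is the vector-valued shadow of this argument and reproves \cite[Proposition 4.9]{ACDS09}: the target carries the strong topology, so one tests against a fixed $y\in\H$, replaces $R_{s,t}$ by the vector $R_{s,t}y$, uses $\norm{W_j^{(m)}R_{s,t}y}\to0$ together with the uniform operator bound on $L_{s,t}^{(m)}$, and concludes by dominated convergence that $\norm{\Tfn^{D_0,\ldots,D_n}(\ldots,W_j^{(m)},\ldots)y}\to0$. I expect the strong-operator slots to be the only real obstacle: the hypothesis ``$\alpha_n<\infty$ or all $\alpha_j=\infty$'' is precisely what guarantees that everything standing to the right of such a slot is either genuinely Schatten—so that the product with a bounded strongly null sequence is norm null—or, in the all-$\infty$ case, may be absorbed into a test vector, and it is this structural dichotomy rather than any estimate that drives the proof.
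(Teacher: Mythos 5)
Your proof is correct and is essentially the paper's argument: both rest on the single analytic fact that multiplying a fixed operator in $\S^p$ ($p<\infty$) by a bounded, strongly convergent sequence is $\S^p$-norm continuous, combined with H\"older's inequality, the trace pairing against $B\in\S^{\alpha'}$, dominated convergence over the finite measure space $\Sigma$, and metrizability to reduce to sequences. The only organizational differences are that you telescope over the $n$ slots one at a time, whereas the paper establishes joint continuity of the product map $\L^{\alpha_1}\times\cdots\times\L^{\alpha_n}\to\L^\alpha$ by iterating the two-factor case (with the hypothesis on $\alpha_n$ playing the identical role in both arguments), and that you reprove the all-$\infty$ case by testing against vectors where the paper simply cites \cite[Proposition 4.9]{ACDS09}.
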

\begin{proof}
	If $\alpha=\infty$, then all $\alpha_j=\infty$, and the result is proven in \cite[Proposition 4.9]{ACDS09}. If $\alpha<\infty$, we can use the same technique and notation as in the proof of Lemma \ref{lem:T(V1,..,Vp) Schatten bound} to find
\begin{align}\label{eq:Tr(TfnC)=int Tr(AC)}
	\Tr(\Tfn^{D_0,\ldots,D_n}(V_1,\ldots,V_n)B)=\int_\Sigma \Tr(A_{s,t}B)d\sigma_f(s,t),
\end{align}
	for every $B\in\S^{\alpha'}$. By H\"older's inequality and the continuity of the product $\mB(\H)_1\times \L^p\to\L^p$,
	\begin{align}\label{eq:product}
		\L^p\times\L^q\to\L^r,\qquad (A,B)\mapsto AB
	\end{align}
	is continuous for every $p,q,r\in[1,\infty]$ with $\tfrac{1}{p}+\tfrac{1}{q}=\tfrac{1}{r}$ and either $q<\infty$ or $p=q=\infty$. Therefore also the product $\L^p\times (\L^q\times\L^s)\to\L^r$ is continuous whenever $p,q,r,s\in[1,\infty]$ satisfy $\tfrac{1}{p}+\tfrac{1}{q}+\tfrac{1}{s}=\tfrac{1}{r}$ and either $s<\infty$ or $p=q=s=\infty$. By repeatedly applying this continuity property, we find that the product
	\begin{align}\label{eq:product2}
		\L^{\alpha_1}\times\cdots\times\L^{\alpha_n}\to\L^\alpha,\qquad (V_1,\ldots,V_n)\mapsto V_1\cdots V_n
	\end{align}
	is continuous under the standing assumptions on $\alpha,\alpha_j$.
	Hence, for every $(s,t)\in\Sigma$, the operator $A_{s,t}B\in\L^1$ depends continuously on $(V_1,\ldots,V_n)\in\L^{\alpha_1}\times\cdots\times\L^{\alpha_n}$. Since a convergent sequence in $\L^p$ is bounded w.r.t. $\nrm{\cdot}{p}$ (where $\nrm{\cdot}{\infty}=\norm{\cdot}$) an application of the dominated convergence theorem shows that \eqref{eq:Tr(TfnC)=int Tr(AC)} depends sequentially continuously on $(V_1,\ldots,V_n)$. By our specific choice of $\L^\infty$, every $\L^p$ is a metric space, hence sequential continuity implies continuity.
\end{proof}

The relation $\tfrac{1}{\alpha}=\tfrac{1}{\alpha_1}+\ldots+\tfrac{1}{\alpha_n}$ is central to the above Lemma. When the resolvent of $D$ is $s$-Schatten, however, for an explicit class $\Wsn$ of functions $f$, defined in Section \ref{sct:Function spaces}, we can do away with this relation. We will prove this in \textsection\ref{sct:MOI continuity}.

\section{Estimating the multiple operator integral in the $s$-summable case}
\label{sct:Function spaces}
%

We specialize the class of functions $f$ that appear in Proposition \ref{prop:Taylor} and consider for $s,n\in\N_0$:
\begin{align*}
	\Wsn:=\{f\in C^{n}:~ \widehat{(fu^m)^{(k)}}\in L^1\text{ for all $m\leq s$ and $k\leq n$}\},
\end{align*}
where $u(x):=x-i$. Examples of functions in $\Wsn$ are $n+1$-differentiable functions such that $(fu^s)^{(k)}\in L^2$ for all $k\leq n+1$, such as Schwartz functions, or functions in $C_c^{n+1}$.

The main result of this section will be the following bound on the multiple operator integral.
\begin{thm}\label{thm:Schatten estimate}
	Let $D$ be self-adjoint in $\H$ such that $(D-i)^{-1}\in\S^s$ for $s\in\N$. For every $n\in\N_0$, every $f\in\Wsn$ and every $V_1,\ldots,V_n\in B(\H)$, the multiple operator integral $T^D_{f^{[n]}}(V_1,\ldots,V_n)$ is trace-class and satisfies the bound
		$$\nrm{T^D_{f^{[n]}}(V_1,\ldots,V_n)}{1}\leq c_{s,n}(f)\norm{V_1}\cdots\norm{V_n}\big\|(D-i)^{-1}\big\|_{s}^s,$$
	where
		$$c_{s,n}(f):=\sum_{k=0}^{\min(s,n)}\vect{s}{k}\frac{\absnorm{\widehat{(fu^{s-k})^{(n-k)}}}}{(n-k)!}.$$
	More generally, when $V\in\mB(\H)_\sa$,
		$$\nrm{T^{D+V,D,\ldots,D}_{f^{[n]}}(V_1,\ldots,V_n)}{1}\leq c_{s,n}(f)\norm{V_1}\cdots\norm{V_n}(1+\norm{V})^{2s}\big\|(D-i)^{-1}\big\|_{s}^s.$$
\end{thm}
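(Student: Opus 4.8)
The plan is to extract, one at a time, $s$ factors of the resolvent $(D-i)^{-1}\in\S^s$ from the multiple operator integral, trading at each step either a power of $u$ or a drop in order for one resolvent. The engine is the divided difference identity
\[
u(x_0)\,f^{[n]}(x_0,\ldots,x_n)=(uf)^{[n]}(x_0,\ldots,x_n)-f^{[n-1]}(x_1,\ldots,x_n),
\]
which is the Leibniz rule for the product $uf=(x-i)f$, since $(x-i)^{[0]}=u(x_0)$, $(x-i)^{[1]}=1$, and all higher divided differences of $x-i$ vanish. Dividing by $u(x_0)$ and passing to operators — multiplication of the symbol by a function of $x_0$ becoming left multiplication by that function of $D$, and a symbol independent of $x_0$ letting $V_1$ detach on the left — this yields the recursion
\[
T^D_{f^{[n]}}(V_1,\ldots,V_n)=(D-i)^{-1}T^D_{(uf)^{[n]}}(V_1,\ldots,V_n)-(D-i)^{-1}V_1\,T^D_{f^{[n-1]}}(V_2,\ldots,V_n).
\]
I would make these symbol manipulations precise inside the representation-independent framework of the operator integral $T_\phi$ of \cite{ACDS09}, writing $u(x_0)^{-1}=i\int_0^\infty e^{-\tau}e^{-i\tau x_0}\,d\tau$ so that the extra factor fits that framework; the hypothesis $f\in\Wsn$ is exactly what guarantees $\widehat{(fu^m)^{(k)}}\in L^1$ for all $m\le s$ and $k\le n$, so that every intermediate multiple operator integral is well defined.

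Next I would iterate the recursion exactly $s$ times along each branch. Each application extracts one resolvent and presents a binary choice: the first term (call it $\mathrm{A}$) raises the power of $u$ by one while preserving the order and the list of operators, whereas the second term ($\mathrm{B}$) consumes the leading operator, lowers the order by one, and leaves a pure-$D$ integral behind. A branch choosing $\mathrm{B}$ exactly $k$ times (hence $\mathrm{A}$ exactly $s-k$ times) ends in the bounded multiple operator integral $T^D_{(fu^{s-k})^{[n-k]}}$ of $n-k$ of the $V_j$'s, preceded by a string of $s$ resolvents interlaced with the consumed operators. The choice $\mathrm{B}$ can be made at most $\min(s,n)$ times, and there are exactly $\binom{s}{k}$ branches with $k$ such choices, which is the source of the binomial coefficients in $c_{s,n}(f)$.

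The estimate then follows termwise in $\S^1$. Lemma \ref{lem:T(V1,..,Vp) Schatten bound}, applied with all $\alpha_j=\infty$, bounds each terminal factor by $\tfrac{1}{(n-k)!}\absnorm{\widehat{(fu^{s-k})^{(n-k)}}}\prod_j\norm{V_j}$ in operator norm, while the $s$ resolvents, each lying in $\S^s$, combine with the interlaced bounded operators through H\"older's inequality into a single factor $\norm{(D-i)^{-1}}_s^s$ in $\S^1$. Summing the $\binom{s}{k}$ contributions over $k=0,\ldots,\min(s,n)$ by the triangle inequality reproduces $c_{s,n}(f)$ and gives the first bound. For the perturbed statement I would run the same recursion on $T^{D+V,D,\ldots,D}_{f^{[n]}}$: the resolvents extracted up to the first $\mathrm{B}$-step are resolvents of $D+V$ and all later ones are resolvents of $D$, and the resolvent identity $(D+V-i)^{-1}=(D-i)^{-1}-(D+V-i)^{-1}V(D-i)^{-1}$ together with $\norm{(D+V-i)^{-1}}\le 1$ compares $\norm{(D+V-i)^{-1}}_s$ with $\norm{(D-i)^{-1}}_s$ up to a factor $(1+\norm{V})$, so that each term picks up at most the factor $(1+\norm{V})^{2s}$ recorded in the statement, leaving the rest of the estimate intact.

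The most delicate point is the rigorous justification of the recursion: checking, at the level of the defining integral \eqref{def:Tfn}, that dividing the symbol by $u(x_0)$ is left multiplication by $(D-i)^{-1}$ and that the $x_0$-independent symbol detaches $V_1$, all while remaining within the class of symbols for which the operator integral is defined. Once the recursion is in place the iteration and the H\"older bounds are bookkeeping, and the definition of $\Wsn$ has been arranged precisely so that no integrability issue can arise at any intermediate stage.
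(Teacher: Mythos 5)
Your proof is correct and is essentially the paper's own argument: your iterated recursion, with its $\binom{s}{k}$ branches terminating in $T^D_{(fu^{s-k})^{[n-k]}}$, is exactly the content of Lemma \ref{lem:added weights} (proved there by induction on $s$, with the one-step resolvent-extraction identity taken from the proof of \cite[Theorem 3.10(i)]{vNS21} and the terms indexed by compositions $j_0+\cdots+j_k=s$ rather than by binary branches), after which the paper concludes just as you do, via the triangle inequality, Lemma \ref{lem:T(V1,..,Vp) Schatten bound} with all $\alpha_j=\infty$, H\"older's inequality, and the comparison $\|(D+V-i)^{-1}\|_s^s\le(1+\|V\|)^{2s}\|(D-i)^{-1}\|_s^s$ for the perturbed case. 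The only cosmetic difference is that the paper extracts resolvents at the last variable (multiplying on the right), whereas you extract at the first variable (multiplying on the left); the combinatorics and estimates are mirror images of each other.
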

This theorem allows us to freely work with the traces of multiple operator integrals up to order $n$. This is the sole analytical ingredient for a truncated version of our main result, namely Theorem \ref{thm:asymptotic expansion}. However, if we want the expansion \eqref{eq:expansion intro} to converge, we will need to impose infinite differentiability of $f$, as well as a growth condition on the $L^1$-norms occurring in Theorem \ref{thm:Schatten estimate}, as $n$ goes to infinity.
We therefore introduce the space
$$\Esg:=\left\{f\in C^\infty :~ \text{there exists $C_f\geq 1$ s.t. } \|\widehat{(fu^m)^{(n)}}\|_1\leq C_f^{n+1}n!^\gamma \,\text{ for all $m\leq s$ and $n\in\mathbb N_0$}\right\},$$
for $\gamma\in (0,1]$. Our main result is that the expansion \eqref{eq:expansion intro} holds for all functions $f\in\Esg$, and certain perturbations $A$. If $\gamma=1$, the expansion converges absolutely whenever the perturbation $A$ is sufficiently small. If $\gamma<1$ the expansion converges absolutely for all perturbations. The following Lemma underlies both results.
\begin{lem}\label{lem:Cs and Es bounds}
	Let $s\in\N$, $D$ self-adjoint in $\H$ with $(D-i)^{-1}\in\S^s$, and $\gamma\in(0,1]$. For any $f\in\Esg$ there exists a $C\geq1$ such that for all $n\in\N_0$, $V_1,\ldots,V_n\in\mB(\H)$, and $V\in\mB(\H)_\sa$, we have
		$$\absnorm{\Tfn^{D+V,D,\ldots,D}(V_1,\ldots,V_n)}\leq \Big(C^{n+1}n!^{\gamma-1}\Big)\norm{V_1}\cdots\norm{V_n}(1+\norm{V})^{2s}\big\|(D-i)^{-1}\big\|_s^s.$$
\end{lem}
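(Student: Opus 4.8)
The plan is to derive Lemma \ref{lem:Cs and Es bounds} as a direct consequence of the bound in Theorem \ref{thm:Schatten estimate}, by controlling the constant $c_{s,n}(f)$ uniformly in $n$ when $f\in\Esg$. Theorem \ref{thm:Schatten estimate} already gives
\begin{align*}
	\nrm{T^{D+V,D,\ldots,D}_{f^{[n]}}(V_1,\ldots,V_n)}{1}\leq c_{s,n}(f)\norm{V_1}\cdots\norm{V_n}(1+\norm{V})^{2s}\big\|(D-i)^{-1}\big\|_{s}^s,
\end{align*}
so the entire content of the lemma is to show that the membership $f\in\Esg$ forces a bound of the form $c_{s,n}(f)\leq C^{n+1}n!^{\gamma-1}$ for some constant $C\geq1$ depending only on $f$, $s$, and $\gamma$. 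First I would write out $c_{s,n}(f)=\sum_{k=0}^{\min(s,n)}\binom{s}{k}\frac{\absnorm{\widehat{(fu^{s-k})^{(n-k)}}}}{(n-k)!}$ and insert the defining estimate of $\Esg$, namely $\|\widehat{(fu^m)^{(n)}}\|_1\leq C_f^{n+1}n!^\gamma$ for all $m\leq s$, applied with $m=s-k$ at differentiation order $n-k$.

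The core of the argument is then a term-by-term estimate. Each summand is bounded by
\begin{align*}
	\binom{s}{k}\frac{C_f^{n-k+1}(n-k)!^{\gamma}}{(n-k)!}=\binom{s}{k}C_f^{n-k+1}(n-k)!^{\gamma-1}.
\end{align*}
Since $\gamma-1\leq0$, the factorial factor $(n-k)!^{\gamma-1}$ is largest when $(n-k)!$ is smallest, i.e.\ when $k$ is as large as possible; but I want a clean uniform bound, so I would instead compare $(n-k)!^{\gamma-1}$ to $n!^{\gamma-1}$. The key elementary inequality is that for $0\leq k\leq s$ one has $\frac{n!}{(n-k)!}=n(n-1)\cdots(n-k+1)\leq n^k\leq n^s$, whence $(n-k)!^{\gamma-1}=n!^{\gamma-1}\big(\tfrac{n!}{(n-k)!}\big)^{1-\gamma}\leq n!^{\gamma-1}\,n^{s(1-\gamma)}$. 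Absorbing the at most $s+1$ terms, the binomial coefficients $\binom{s}{k}\leq 2^s$, and the powers $C_f^{n-k+1}\leq C_f^{n+1}$ (using $C_f\geq1$), I obtain $c_{s,n}(f)\leq (s+1)2^s C_f^{n+1}\,n^{s(1-\gamma)}\,n!^{\gamma-1}$.

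The remaining step is to absorb the polynomial factor $n^{s(1-\gamma)}$ into an exponential factor $C^{n+1}$, which is routine: since $n^{s(1-\gamma)}$ grows only polynomially in $n$ while $(C/C_f)^{n}$ grows exponentially, there is a constant $C\geq C_f$ (depending on $s$ and $\gamma$) with $(s+1)2^s\,n^{s(1-\gamma)}\,C_f^{n+1}\leq C^{n+1}$ for all $n\in\N_0$. This yields $c_{s,n}(f)\leq C^{n+1}n!^{\gamma-1}$ and hence the claimed inequality, taking $\max(1,C)$ if needed to ensure $C\geq1$. I expect the main obstacle to be purely bookkeeping rather than conceptual: one must handle the edge cases $n<s$ (so that the sum stops at $\min(s,n)=n$ and the bound $n^s$ should be read as $n^n$ or simply subsumed) and $n=0$ carefully, and make sure the factor $n^{s(1-\gamma)}$ is interpreted as $1$ when $n=0$ or $\gamma=1$. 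When $\gamma=1$ the polynomial factor disappears and one gets the cleaner bound $c_{s,n}(f)\leq C^{n+1}$, matching the statement with $n!^{\gamma-1}=1$; it is worth checking that the single constant $C$ works simultaneously for the $\gamma<1$ and $\gamma=1$ regimes, which it does since the argument never divides by $1-\gamma$.
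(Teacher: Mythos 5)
Your proposal is correct and follows essentially the same route as the paper: the paper's proof is the one-liner ``apply the definition of $\Esg$ to Theorem \ref{thm:Schatten estimate}, and absorb $2^s$ into the constant $C$.'' Your write-up merely makes explicit the bookkeeping the paper compresses into that sentence, in particular the conversion $(n-k)!^{\gamma-1}\leq n!^{\gamma-1}\,n^{s(1-\gamma)}$ and the absorption of this polynomial factor into $C^{n+1}$, which is indeed the step one must check.
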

\begin{proof}
Apply the definition of $\Esg$ to Theorem \ref{thm:Schatten estimate}, and absorb $2^s$ into the constant $C$.
\end{proof}

This lemma will be used in Subsection \ref{sct:convergence} and Section \ref{sct:vanishing pairing}.

Examples of functions in $\E^{s,1}$ are Schwartz functions with compactly supported Fourier transform. The following proposition gives more examples.

\begin{prop}
	Let $f\in C^\infty$ and $s,t\in\N_0$.
	\begin{enumerate}[label=(\roman*)]
		\item\label{mult E} If $f\in \E^{s,1}$ and $g\in \E^{t,1}$, then $fg\in \E^{s+t,1}$.
		\item\label{exp1 E} If $\hat{f}\in L^1$ with $|\hat{f}(x)|\leq e^{-c|x|}$ a.e. for some $c>0$, then $f\in\E^{0,1}$.
		\item\label{exp2 E} If $\widehat{fu^s}\in L^1$ with $|\widehat{fu^s}(x)|\leq e^{-c|x|}$ a.e. for some $c>0$, then $f\in\E^{s,1}$. 
		\item\label{rati E} Rational functions in $\O(|x|^{-s-1})$ are in $\E^{s,1}$.
		\item\label{Gauss E} The function $x\mapsto e^{-x^2}$ is in $\E^{s,1/2}$ for any $s\in\N_0$.
	\end{enumerate}
\end{prop}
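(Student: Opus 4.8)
The unifying observation is that $\widehat{(fu^m)^{(n)}}(x)=(ix)^n\widehat{fu^m}(x)$, so that the quantity governing membership in $\Esg$ is the weighted norm
\[
\absnorm{\widehat{(fu^m)^{(n)}}}=\int_\R|x|^n\,\bigl|\widehat{fu^m}(x)\bigr|\,dx.
\]
The plan in every part is therefore to first control the decay of $\widehat{fu^m}$ for $m\le s$ and then estimate these norms in $n$ (taking a common constant over the finitely many $m\le s$); the elementary identity $\int_\R|x|^ne^{-c|x|}\,dx=2\,n!/c^{n+1}$ is the device that converts exponential decay into a bound of the form $C^{n+1}n!$. For (i), given $m\le s+t$ I would write $u^m=u^{m_1}u^{m_2}$ with $m_1=\min(m,s)\le s$ and $m_2=m-m_1\le t$, so $fgu^m=(fu^{m_1})(gu^{m_2})$; applying the Leibniz rule, then Young's inequality $\absnorm{a*b}\le\absnorm a\,\absnorm b$ to the resulting convolutions, and inserting the defining bounds for $f$ and $g$, the identity $\binom nk k!\,(n-k)!=n!$ collapses the sum to $C_fC_g\,n!\sum_kC_f^kC_g^{n-k}\le C^{n+1}n!$. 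Part (ii) is then just the engine integral applied to $\hat f$ itself, and is the $s=0$ base case of (iii).

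For (iii) it suffices, for each fixed $m\le s$, to prove that $\widehat{fu^m}$ decays exponentially, since the engine integral then finishes exactly as in (ii). Writing $fu^m=(fu^s)\,u^{-(s-m)}$ and using that pointwise multiplication intertwines with convolution under the Fourier transform, I obtain $\widehat{fu^m}=\widehat{fu^s}*\widehat{u^{-(s-m)}}$. A residue computation gives $\widehat{u^{-1}}(x)=i\,e^x\mathbf 1_{(-\infty,0)}(x)$, so $\widehat{u^{-j}}$ is a $j$-fold convolution of this, hence lies in $L^1$ and decays exponentially; since a convolution of two exponentially decaying functions again decays exponentially, so does $\widehat{fu^m}$. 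Part (iv) then follows from (iii) with $f=R$: because $R\in\O(|x|^{-s-1})$ has no real poles, $Ru^s$ is a proper rational function (numerator degree strictly below denominator degree) with poles off the real axis, and partial fractions together with the residue formula for $\widehat{(x-z)^{-l}}$ show $\widehat{Ru^s}\in L^1$ with exponential decay.

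For (v), the Fourier transform of $g(x)=e^{-x^2}$ is the Gaussian $\tfrac{1}{2\sqrt\pi}e^{-x^2/4}$, so $\widehat{gu^m}$ is a degree-$m$ polynomial times $e^{-x^2/4}$ and the weighted norms reduce to $\int_\R|x|^Ne^{-x^2/4}\,dx=2^{N+1}\Gamma(\tfrac{N+1}{2})$ with $N=n+d$, $d\le m$. I expect the main obstacle to be the asymptotic estimate $\Gamma(\tfrac{N+1}{2})=\O\bigl(C^N(N!)^{1/2}\bigr)$, extracted from Stirling's formula: this is the one place where the faster-than-exponential decay of the Gaussian genuinely produces $\sqrt{n!}$ rather than $n!$, forcing the exponent $\gamma=\tfrac12$, and one must carefully absorb the polynomial corrections arising from $d\le m\le s$ and from comparing $(n+s)!$ with $n!$ into the constant $C^{n+1}$. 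A secondary point to verify throughout (iii)–(v) is the legitimacy of $\widehat{(fu^m)^{(n)}}=(ix)^n\widehat{fu^m}$ and of the product-to-convolution identity; both are justified by the exponential decay established for $\widehat{fu^m}$ together with the fact that Fourier transforms of $L^1$ functions form an algebra under convolution.
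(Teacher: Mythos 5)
Your proof is correct, and its core mechanisms coincide with the paper's: Leibniz plus Young's inequality for (i), conversion of exponential decay of the Fourier transform into $C^{n+1}n!$ growth for (ii)--(iv), and the Gaussian's faster decay yielding $\sqrt{n!}$ for (v). The differences are in execution. For (iii), the paper argues modularly: it applies (ii) to $fu^s$, asserts (``it is easy to see'') that $u^{-s}\in\E^{s-1,1}$, and then invokes the multiplicativity (i) to conclude $f=(fu^s)\cdot u^{-s}\in\E^{s-1,1}$, handling the remaining exponent $m=s$ separately as in (ii); you instead establish exponential decay of each $\widehat{fu^m}=\widehat{fu^s}*\widehat{u^{-(s-m)}}$ directly, via the residue computation $\widehat{u^{-1}}(x)=ie^x\mathbf{1}_{(-\infty,0)}(x)$ and the stability of exponential decay under convolution. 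Your route is less modular but has the merit of making the paper's ``easy to see'' claim explicit --- verifying $u^{-s}\in\E^{s-1,1}$ requires exactly the residue and decay facts you spell out --- and likewise your partial-fractions argument fills in (iv), which the paper dismisses with ``follows from (iii)''. For (v), the paper avoids Stirling entirely through the elementary pointwise bound $|x|^n\leq\sqrt{n!}\,c^n e^{x^2/(2c^2)}$, integrated against $p(x)e^{-x^2/c^2}$; your computation $\int_\R|x|^Ne^{-x^2/4}\,dx=2^{N+1}\Gamma(\tfrac{N+1}{2})$ plus Stirling's formula is equivalent, but does require the extra care you flag about absorbing the shift $N=n+d$, $d\leq s$, into the constant. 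Both treatments are sound; the paper's trick in (v) is worth remembering, as it sidesteps asymptotic analysis altogether.
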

\begin{proof}
	\begin{enumerate}[label=(\roman*)]
	\item For $m\leq s$ and $p\leq t$, Young's inequality gives
	\begin{align*}
		\|\widehat{(fgu^{m+p})^{(n)}}\|_1&\leq\sum_{k=0}^n\vect{n}{k}\|\widehat{(fu^m)^{(k)}}\|_{1}\|\widehat{(gu^p)^{(n-k)}}\|_{1}\\
		&\leq (n+1)n!(C_fC_g)^{n+1}.
	\end{align*}
	Any polynomial in $n$ is $\mathcal{O}(C^n)$ for some $C\geq1$.\\
	\item As
		$$\frac{1}{n!}(\tfrac{1}{2}c)^n|x|^n\leq\sum_{m=0}^\infty \frac{1}{m!}(\tfrac12c|x|)^m=e^{\tfrac12c|x|},$$
	we find $\|\widehat{f^{(n)}}\|_1=\||x|^n\hat{f}\|_1\leq \|e^{\tfrac12c|x|}\hat{f}\|_1(\tfrac12c)^{-n}n!$, thereby obtaining $f\in \E^{0,1}$.
	\item 
	 Item \ref{exp1 E} gives that $fu^s\in\E^{0,1}$. It is easy to see that $u^{-s}\in\E^{s-1,1}$. Therefore \ref{mult E} gives $f\in\E^{s-1,1}$, i.e., $\|\widehat{(fu^m)^{(n)}}\|_1\leq C_f^{n+1}n!$ for $m\leq s-1$. Similar to \ref{exp1 E} we get $\|\widehat{(fu^s)^{(n)}}\|_1\leq C^{n+1}n!$ for some $C\geq 1$.
	\item Follows from \ref{exp2 E}.
	
	\item Let $f(x)=e^{-x^2}$ and $m\in\N_0$. The Fourier transform of $fu^m$ is a polynomial times a Gaussian, say $(fu^m)\hat{~}(x)=p(x)e^{-x^2/c^2}$. Therefore,
\begin{align*}
	\absnorm{\widehat{(fu^m)^{(n)}}}=\absnorm{|x|^n p(x)e^{-x^2/c^2}}.
\end{align*}
Furthermore, $|x|^n=c^n\sqrt{(x^2/c^2)^{n}}\leq c^n\sqrt{n!e^{x^2/c^2}}=\sqrt{n!}c^ne^{\frac{x^2}{2c^2}}$, so
\begin{align*}
	\absnorm{\widehat{(fu^m)^{(n)}}}&\leq \sqrt{n!}c^n\absnorm{p(x)e^{-\frac{x^2}{2c^2}}}.
\end{align*}
Therefore, $f\in\E^{s,1/2}$ for any $s\in\N_0$.\qedhere
	\end{enumerate}
\end{proof}

\subsection{Bound on the multiple operator integral}\label{sct:core results}
We will now give a proof of Theorem \ref{thm:Schatten estimate}, which uses the summability of $D$ to obtain a trace class estimate on the multiple operator integral $\Tfn^D$. For summability $s=2$, a similar estimate was found by Anna Skripka in \cite[Lemma 3.6]{Skr13}. This was later generalized to the case of so-called relative Schatten perturbations in \cite[Theorem 3.10]{vNS21}, but only holds for $k$-multilinear operator integrals of order $k\geq s$.

The core idea of our proof is the same as of \cite[Lemma 3.6]{Skr13} and \cite[Theorem 3.10]{vNS21}, namely to expand $T_{f^{[n]}}(V_1,\ldots,V_n)$ as a sum of operator integrals, which should then be bounded using the triangle inequality and (a noncommutative) H\"older's inequality. 
For brevity, we sometimes write $V_k^{\{j\}}:=V_k(D_k-i)^{-j}$, and $T^{D_0,\ldots,D_k}_\phi(V_1,\ldots,V_k)^{\{j\}}:=T^{D_0,\ldots,D_k}_\phi(V_1,\ldots,V_k)(D_k-i)^{-j}$.

\begin{lem}\label{lem:added weights}
	For $s,n\in\N_0$, $f\in\Wsn$, $V_1,\ldots,V_n\in\mB(\H)$, and $D_0,\ldots,D_n$ self-adjoint in $\H$, we have
	\begin{align*}
		 T^{D_0,\ldots,D_n}_{f^{[n]}}(V_1,\ldots,V_n)=&
		\sum_{k=0}^{\min(s,n)}(-1)^k\!\!\!\!\!\!\!\sum_{\substack{ j_0\geq0,\, j_1,\ldots,j_k\geq 1,\\ j_0+\ldots+j_k=s}}
		 T^{D_0,\ldots,D_{n-k}}_{(fu^{s-k})^{[n-k]}} (V_1,\ldots,V_{n-k})^{\{j_0\}}V_{n-k+1}^{\{j_1\}}\cdots V_n^{\{j_k\}}.
	\end{align*}
\end{lem}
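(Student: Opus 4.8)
The plan is to prove the formula by induction on the summability parameter $s$, the case $s=0$ being trivial (the only surviving term is $k=0$, $j_0=0$, and the identity reads $\Tfn^{D_0,\ldots,D_n}(V_1,\ldots,V_n)=\Tfn^{D_0,\ldots,D_n}(V_1,\ldots,V_n)$). The entire inductive step is powered by a single \emph{weight-adding} identity, valid for any $m\geq1$, any $g\in\W^{1,m}$, and self-adjoint $D_0,\ldots,D_m$:
\[
T^{D_0,\ldots,D_m}_{g^{[m]}}(V_1,\ldots,V_m)=T^{D_0,\ldots,D_m}_{(gu)^{[m]}}(V_1,\ldots,V_m)(D_m-i)^{-1}-T^{D_0,\ldots,D_{m-1}}_{g^{[m-1]}}(V_1,\ldots,V_{m-1})V_m(D_m-i)^{-1},
\]
with the convention that the second term is absent when $m=0$ (where the identity is just $g(D_0)=(gu)(D_0)(D_0-i)^{-1}$). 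This is the $s=1$ instance of the lemma, and establishing it is the crux.

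To prove it I would start from the Leibniz rule for divided differences applied to the product $gu$. Since $u(x)=x-i$ has divided differences $u^{[0]}(x)=x-i$, $u^{[1]}\equiv1$, and $u^{[j]}=0$ for $j\geq2$, the Leibniz rule collapses to the two-term identity
\[
(gu)^{[m]}(x_0,\ldots,x_m)=(x_m-i)\,g^{[m]}(x_0,\ldots,x_m)+g^{[m-1]}(x_0,\ldots,x_{m-1}),
\]
which one may alternatively verify by a direct induction on $m$ using the recursion defining $g^{[m]}$. Multiplying through by the \emph{bounded} function $u^{-1}(x_m)=(x_m-i)^{-1}$ of the last variable and solving for $g^{[m]}$ gives $g^{[m]}=(gu)^{[m]}u^{-1}(x_m)-g^{[m-1]}u^{-1}(x_m)$. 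Passing to multiple operator integrals, I would invoke two standard structural properties of $T_\phi$ from \cite{ACDS09}: multiplicativity, $T_{\psi\cdot\chi}=T_\psi\cdot T_\chi$ for a symbol factoring at a node, and the fact that multiplying a symbol by a bounded function $w(x_m)$ of the last variable corresponds to right multiplication of the operator integral by $w(D_m)$. Applied to the two summands (using $u^{-1}(D_m)=(D_m-i)^{-1}$ and $T^{D_{m-1},D_m}_{1}(V_m)=V_m$) these yield exactly the weight-adding identity. The purpose of first multiplying by $u^{-1}$ is that it keeps all symbols bounded, so no domain issues with the unbounded operator $D_m-i$ ever arise.

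For the inductive step, assume the formula holds for $s$ and let $f\in\W^{s+1,n}\subseteq\W^{s,n}$. I would take the (assumed) expansion for summability $s$ and apply the weight-adding identity to the leading factor $T^{D_0,\ldots,D_{n-k}}_{(fu^{s-k})^{[n-k]}}(V_1,\ldots,V_{n-k})$ of each term, leaving the trailing product $V_{n-k+1}^{\{j_1\}}\cdots V_n^{\{j_k\}}$ untouched. Each term splits into two: a \emph{weight-raising} part, which keeps $k$ fixed, raises the $u$-power $s-k\mapsto(s+1)-k$, and sends $j_0\mapsto j_0+1$; and an \emph{order-lowering} part, which sends $k\mapsto k+1$, keeps the $u$-power at $s-k=(s+1)-(k+1)$, sets the new $j_0'=0$, converts $V_{n-k}$ into $V_{n-k}^{\{1+j_0\}}$, and carries the sign $(-1)^{k+1}$. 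The remaining bookkeeping is to verify that these two families are precisely the terms of the $(s+1)$-formula, each occurring once: a target term with $j_0'\geq1$ comes uniquely from the weight-raising part of the $s$-term with the same $k$ and $j_0=j_0'-1$, while a target term with $j_0'=0$ comes uniquely from the order-lowering part of the $s$-term with $k$ one smaller and $j_0=j_1'-1$. One then checks that the constraints $j_0'\geq0$, $j_i'\geq1$, $\sum_i j_i'=s+1$, and the cutoff $0\leq k\leq\min(s+1,n)$ all match.

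I expect the main obstacle to be twofold. The genuinely delicate analytic point is the rigorous justification of the weight-adding identity: one must ensure that multiplicativity and the last-variable right-multiplication rule of the multiple operator integral apply to the symbols at hand, which is exactly why the hypothesis $f\in\Wsn$ (guaranteeing $\widehat{(fu^m)^{(k)}}\in L^1$ for all $m\leq s$, $k\leq n$) is imposed: it places every symbol $(fu^{s-k+1})^{[n-k]}$ that arises in the induction into the class on which $T_\phi$ is defined and linear, and the $u^{-1}$ trick keeps everything bounded. The more laborious, though routine, part is the combinatorial reassembly; the one thing to watch is the boundary behaviour at $k=\min(s,n)$, where either the weight budget is exhausted (each $j_i'\geq1$ forces $k\leq s+1$) or the order-lowering part would produce a negative-order integral and is therefore correctly absent, so that the cutoff $\min(s+1,n)$ is enforced simultaneously by both mechanisms.
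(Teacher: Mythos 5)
Your proposal is correct and takes essentially the same route as the paper: induction on $s$ powered by the weight-adding identity
$T^{D_0,\ldots,D_n}_{f^{[n]}}(V_1,\ldots,V_n)=T^{D_0,\ldots,D_n}_{(fu)^{[n]}}(V_1,\ldots,V_n)(D_n-i)^{-1}-T^{D_0,\ldots,D_{n-1}}_{f^{[n-1]}}(V_1,\ldots,V_{n-1})V_n(D_n-i)^{-1}$,
followed by exactly the same splitting into weight-raising and order-lowering sums and the same reindexing, including the boundary checks at $k=\min(s+1,n)$. The only difference is that the paper simply cites this identity from the proof of \cite[Theorem 3.10(i)]{vNS21}, whereas you rederive it from the Leibniz rule for divided differences (using $u^{[1]}\equiv 1$, $u^{[j]}=0$ for $j\geq 2$) together with the standard factorization properties of the multiple operator integral, which is indeed how it is established in that reference.
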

\begin{proof}
	We prove the lemma by induction on $s$. If $s=0$, the statement follows easily. 
	For the induction step, we note that the proof of \cite[Theorem 3.10(i)]{vNS21} implies that, for all $f\in\Wsn$, we have
		$$T^{D_0,\ldots,D_n}_{f^{[n]}}(V_1,\ldots,V_n)=T^{D_0,\ldots,D_n}_{(fu)^{[n]}}(V_1,\ldots,V_n)(D_n-i)^{-1}-T^{D_0,\ldots,D_{n-1}}_{f^{[n-1]}}(V_1,\ldots,V_{n-1})V_n(D_n-i)^{-1}.$$
	Suppose the claim of the lemma holds for a certain $s\in\N_0$. Then
	\begin{align*}
		&T^{D_0,\ldots,D_n}_{f^{[n]}}(V_1,\ldots,V_n)\\
		&\quad=\sum_{k=0}^{\min(s,n)}\sum_{\substack{j_0\geq0,\, j_1,\ldots,j_k\geq 1\\ j_0+\ldots+j_k=s}}  (-1)^kT^{D_0,\ldots,D_{n-k}}_{(fu^{s-k+1})^{[n-k]}}(V_1,\ldots,V_{n-k})^{\{j_0+1\}}V_{n-k+1}^{\{j_1\}}\cdots V_n^{\{j_k\}}\\
		&\qquad+\sum_{k=0}^{\min(s,n-1)}\sum_{\substack{j_0\geq0,\, j_1,\ldots,j_k\geq 1\\ j_0+\ldots+j_k=s}}(-1)^{k+1}T^{D_0,\ldots,D_{n-k-1}}_{(fu^{s-k})^{[n-k-1]}}(V_1,\ldots,V_{n-k-1})V_{n-k}^{\{j_0+1\}}V_{n-k+1}^{\{j_1\}}\cdots V_n^{\{j_k\}}\\
		&\quad=\sum_{k=0}^{\min(s,n)}\sum_{\substack{j_0\geq1,\,j_1,\ldots,j_k\geq1\\j_0+\ldots+j_k=s+1}}(-1)^k T^{D_0,\ldots,D_{n-k}}_{(fu^{s+1-k})^{[n-k]}}(V_1,\ldots,V_{n-k})^{\{j_0\}}V_{n_k+1}^{\{j_1\}}\cdots V_n^{\{j_k\}}\\
		&\qquad+\sum_{k=1}^{\min(s+1,n)}\sum_{\substack{j_0=0,\,j_1,\ldots,j_k\geq1\\ j_0+\ldots+j_k=s+1}}(-1)^kT^{D_0,\ldots,D_{n-k}}_{(fu^{s+1-k})^{[n-k]}}(V_1,\ldots,V_{n-k})^{\{j_0\}}V_{n-k+1}^{\{j_1\}}\cdots V_n^{\{j_k\}}.
	\end{align*}
	In the first term we can freely replace the sum from $k=0$ to $\min(s,n)$ by a sum from $k=0$ to $\min(s+1,n)$, because the appearing sum over $j_0,\ldots,j_k$ is trivial for $k=s+1$. Similarly, in the second term, we can freely let $k$ run from 0 to $\min(s+1,n)$. Combining the two terms gives the claim of the lemma for $s+1$, which completes the induction step.
\end{proof}

We prove the main result of this section. 

\begin{proof}[Proof of Theorem \ref{thm:Schatten estimate}.]
	We apply Lemma \ref{lem:added weights}, and find
	\begin{align*}
		\nrm{T^D_{f^{[n]}}(V_1,\ldots,V_n)}{1}\leq\sum_{k=0}^{\min(s,n)}\sum_{\substack{ j_0\geq0,\, j_1,\ldots,j_k\geq 1,\\ j_0+\ldots+j_k=s}}
		\nrm{ T^D_{(fu^{s-k})^{[n-k]}} (V_1,\ldots,V_{n-k})^{\{j_0\}}}{s \over j_0} \nrm{V_{n-k+1}^{\{j_1\}}}{s \over j_1}\cdots \nrm{V_n^{\{j_k\}}}{s \over j_k}\!\!.
	\end{align*}
	Apply Lemma \ref{lem:T(V1,..,Vp) Schatten bound}, to find
	\begin{align*}
		\nrm{T^D_{f^{[n]}}(V_1,\ldots,V_n)}{1}\leq\sum_{k=0}^{\min(s,n)}\sum_{\substack{ j_0\geq0,\, j_1,\ldots,j_k\geq 1,\\ j_0+\ldots+j_k=s}}\frac{1}{(n-k)!}\nrm{\widehat{(fu^{s-k})^{(n-k)}}}{1}\norm{V_1}\cdots\norm{V_n}\|(D-i)^{-1}\|_{s}^s.
	\end{align*}
	A bit of combinatorics shows that the sum over $j_0,\ldots,j_k$ adds a factor $\vect{s}{k}$, which implies the first statement of the theorem. The second statement follows similarly, with the added remark that
		$$\|(D+V-i)^{-1}\|_{s}^s\leq(1+\norm{V})^{2s}\|(D-i)^{-1}\|_{s}^s.$$
	See, e.g., \cite[Appendix B, Lemma 6]{CP}.
\end{proof}

\subsection{Continuity of the multiple operator integral}
\label{sct:MOI continuity}

\begin{thm}\label{thm:continuity for L's}
Let $s\in\N$, $D$ self-adjoint in $\H$ with $(D-i)^{-1}\in\S^s$, $n\in\N_0$, and $f\in\Wsn$. The map $$T^D_{f^{[n]}}:\L^\infty\times\cdots\times\L^\infty\to\L^1$$ is continuous. (Recall that $\L^\infty=B(\H)_1$, endowed with the strong operator topology.)
\end{thm}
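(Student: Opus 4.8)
The plan is to reduce the statement to the product-continuity already established in Lemma \ref{lem:continuity with alpha relation}, using the weight-redistribution identity of Lemma \ref{lem:added weights} to shift all analytic weight onto resolvents. The guiding point is that each of the $s$ resolvent factors $(D-i)^{-1}$ supplied by that identity upgrades a merely bounded operator to a genuinely Schatten-class one, after which the relevant H\"older exponents add up to $1$ and the product lands in $\L^1$.

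First I would apply Lemma \ref{lem:added weights} with $D_0=\cdots=D_n=D$, writing $T^D_{f^{[n]}}(V_1,\ldots,V_n)$ as a \emph{finite} sum, over $0\le k\le\min(s,n)$ and tuples $j_0\ge 0$, $j_1,\ldots,j_k\ge 1$ with $j_0+\cdots+j_k=s$, of terms
$$
P_0P_1\cdots P_k,\qquad P_0:=T^D_{(fu^{s-k})^{[n-k]}}(V_1,\ldots,V_{n-k})(D-i)^{-j_0},\qquad P_l:=V_{n-k+l}(D-i)^{-j_l}.
$$
As the sum is finite and finite sums of continuous maps are continuous, it suffices to prove that each term is continuous $(\L^\infty)^{\times n}\to\L^1$.

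Next I would keep track of Schatten exponents. Since $(D-i)^{-1}\in\S^s$ we have $(D-i)^{-j_l}\in\L^{s/j_l}$, whence $P_l\in\L^{s/j_l}$ and $P_0\in\L^{s/j_0}$ (with the reading $s/0=\infty$). Each factor depends continuously on the $V$'s: by Lemma \ref{lem:continuity with alpha relation} applied to $fu^{s-k}$ in order $n-k$ (legitimate since $\widehat{(fu^{s-k})^{(n-k)}}\in L^1$ because $f\in\Wsn$, $s-k\le s$ and $n-k\le n$), the map $(V_1,\ldots,V_{n-k})\mapsto T^D_{(fu^{s-k})^{[n-k]}}(V_1,\ldots,V_{n-k})$ is continuous $(\L^\infty)^{\times(n-k)}\to\L^\infty$, and right-multiplication by the fixed power $(D-i)^{-j_l}\in\L^{s/j_l}$ is continuous by \eqref{eq:product}, whose hypothesis $q<\infty$ is met because $j_l\ge 1$; likewise $V_{n-k+l}\mapsto P_l$ is continuous into $\L^{s/j_l}$. (On the unit ball where the $V_i$ live, Lemma \ref{lem:T(V1,..,Vp) Schatten bound} bounds the operator norm of the MOI factor by a constant, so it ranges in a fixed norm-ball on which s.o.t.-multiplication stays jointly continuous.) Thus $(V_1,\ldots,V_n)\mapsto(P_0,\ldots,P_k)$ is continuous into $\L^{s/j_0}\times\cdots\times\L^{s/j_k}$, and because $\tfrac{j_0}{s}+\cdots+\tfrac{j_k}{s}=1$ with the last exponent $s/j_k$ finite, the $(k+1)$-fold product map \eqref{eq:product2} carries this continuously into $\L^1$. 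Composing and summing over the finitely many terms proves the theorem.

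The one genuine subtlety I expect is the interaction of the strong operator topology with multiplication: s.o.t.\ convergence is not preserved under adjoints, so joint continuity of $(A,B)\mapsto AB$ with $A$ ranging over the s.o.t.\ factor $\L^\infty$ holds only when the \emph{right} factor $B$ is Schatten-class of finite exponent --- exactly the asymmetric hypothesis in Lemma \ref{lem:continuity with alpha relation}. The whole purpose of invoking Lemma \ref{lem:added weights} is to arrange every product so that each s.o.t.\ factor is immediately followed by such a finite-exponent resolvent power, so that the delicate left factors are never multiplied from the wrong side.
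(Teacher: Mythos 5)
Your proposal is correct and follows essentially the same route as the paper's own proof: decompose $T^D_{f^{[n]}}$ via Lemma \ref{lem:added weights}, feed each weighted factor through Lemma \ref{lem:continuity with alpha relation} and the product continuity \eqref{eq:product}, and finish with \eqref{eq:product2} since the exponents $j_l/s$ sum to $1$ with the final exponent finite. Your added checks (that $\widehat{(fu^{s-k})^{(n-k)}}\in L^1$ follows from $f\in\Wsn$, and the norm-boundedness remark reconciling the MOI factor with the unit-ball definition of $\L^\infty$) are refinements of the same argument, not a different one.
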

\begin{proof}
	Suppose that 
	$V^m_1\to V_1,\ldots,V^m_n\to V_n$ in $\L^\infty$.
	By continuity of \eqref{eq:product}, we obtain that 
		$$(V^m_{n-k+l})^{\{j_l\}}\to V_{n-k+l}^{\{j_l\}}\quad\text{in $\L^{s/j_l}$.}$$
	We invoke Lemma \ref{lem:continuity with alpha relation} to find that
		$$T^D_{(fu^{s-k})^{[n-k]}}(V^m_1,\ldots,V^m_{n-k})^{\{j_0\}}\to T^D_{(fu^{s-k})^{[n-k]}}(V_1,\ldots,V_{n-k})^{\{j_0\}}\quad\text{in $\L^{s/j_0}$.}$$
	By Lemma \ref{lem:added weights} and the continuity of the product \eqref{eq:product2}  (for which we remark that the assumptions on $\alpha_j,\alpha$ are indeed satisfied for all terms obtained from Lemma \ref{lem:added weights}) we find that
		$$T^D_{f^{[n]}}(V^m_1,\ldots,V^m_n)\to T^D_{f^{[n]}}(V_1,\ldots,V_n)\quad\text{in $\L^{1}$},$$
	so we are done.
\end{proof}

To emphasize the strength of this result, we compare it to Lemma \ref{lem:continuity with alpha relation} which is (at least in the separate cases $\alpha_1,\ldots,\alpha_n<\infty$ and $\alpha=\infty$) known in the literature. Using the continuity of the inclusion $\L^\alpha\hookrightarrow\L^\infty$ we obtain the following clear improvement.
\begin{cor}
	Let $s\in\N$, $D$ self-adjoint in $\H$ with $(D-i)^{-1}\in\S^s$, $n\in\N_0$, and $f\in\Wsn$. For any $\alpha_1,\ldots,\alpha_n\in[1,\infty]$ and any $\alpha\in[1,\infty]$ (no relation between $\alpha$ and the $\alpha_j$'s assumed) the map
		$$\Tfn^D:\L^{\alpha_1}\times\cdots\times\L^{\alpha_n}\to\L^\alpha$$
	is continuous.
\end{cor}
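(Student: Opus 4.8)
The plan is to obtain the Corollary by sandwiching the map of Theorem~\ref{thm:continuity for L's} between two continuous inclusions, exploiting the fact that $\Tfn^D$ always lands in the \emph{smallest} Schatten class $\L^1$. Concretely, I would factor the map under consideration as
\[
\L^{\alpha_1}\times\cdots\times\L^{\alpha_n}\xrightarrow{\ \iota\ }\L^\infty\times\cdots\times\L^\infty\xrightarrow{\ \Tfn^D\ }\L^1\xrightarrow{\ \iota'\ }\L^\alpha,
\]
where $\iota$ is the product of the coordinate inclusions $\L^{\alpha_j}\hookrightarrow\L^\infty$ and $\iota'$ is the inclusion $\L^1\hookrightarrow\L^\alpha$. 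The middle arrow is continuous by Theorem~\ref{thm:continuity for L's}, and it genuinely lands in $\L^1$ (the underlying trace-class property being Theorem~\ref{thm:Schatten estimate}); so the whole composite is continuous as soon as $\iota$ and $\iota'$ are, and the relation $\tfrac1\alpha=\sum_j\tfrac1{\alpha_j}$ of Lemma~\ref{lem:continuity with alpha relation} never enters.

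For the domain inclusions, I would check that each $\L^{\alpha_j}\hookrightarrow\L^\infty$ is continuous. When $\alpha_j<\infty$ this follows from $\norm{V}\leq\nrm{V}{\alpha_j}$, so that $\nrm{\cdot}{\alpha_j}$-convergence implies operator-norm convergence and hence convergence in the strong operator topology carried by $\L^\infty$; when $\alpha_j=\infty$ the inclusion is the identity. A finite product of continuous maps into a product of metric spaces is continuous (recall from the proof of Lemma~\ref{lem:continuity with alpha relation} that every $\L^p$ is metric), so $\iota$ is continuous and one may even argue sequentially.

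For the codomain inclusion $\iota'$, I would use the monotonicity of the Schatten norms: $\nrm{T}{\alpha}\leq\nrm{T}{1}$ for every $\alpha\in[1,\infty]$ (with $\nrm{T}{\infty}=\norm{T}$). For $\alpha<\infty$ this makes $\L^1\hookrightarrow\L^\alpha$ norm-continuous directly; for $\alpha=\infty$ it gives operator-norm continuity into $(B(\H),\text{s.o.t.})$, and operator-norm convergence implies strong convergence. This is exactly the step that removes any constraint between $\alpha$ and the $\alpha_j$'s, since $\L^1$ sits continuously inside every target $\L^\alpha$.

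I do not expect a genuine obstacle here: all the analytic weight has already been spent in Theorem~\ref{thm:continuity for L's} (via Lemma~\ref{lem:added weights} and the summability of $D$). The only point that deserves care is bookkeeping the topology on $\L^\infty$: the strong operator topology appears on both ends, and the two inclusions must be verified to run in the correct directions — $\L^{\alpha_j}\hookrightarrow\L^\infty$ (finer to coarser) on the domain and $\L^1\hookrightarrow\L^\alpha$ (smallest Schatten class into a larger one) on the codomain. Once these directions are identified, continuity is immediate from $\norm{\cdot}\leq\nrm{\cdot}{\alpha_j}$ and $\nrm{\cdot}{\alpha}\leq\nrm{\cdot}{1}$.
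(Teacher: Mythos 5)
Your proof is correct and is essentially the paper's own argument: the paper obtains the corollary in a single line by composing Theorem \ref{thm:continuity for L's} with the continuous coordinate inclusions $\L^{\alpha_j}\hookrightarrow\L^\infty$ and (implicitly) the inclusion $\L^1\hookrightarrow\L^\alpha$ on the codomain, which is exactly your factorization. Your explicit verification of the two inclusions via $\norm{\cdot}\leq\nrm{\cdot}{\alpha_j}$ and $\nrm{\cdot}{\alpha}\leq\nrm{\cdot}{1}$ only spells out what the paper leaves implicit.
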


\section{Cyclic cocycles and universal forms underlying the spectral action}
\label{sct:CC and UF underlying the SA}
Mainly to fix our conventions, we start with the definition of a finitely summable spectral triple, which is the situation in which our main result is stated.
\begin{defn}
  \label{defn:st}
	Let $s\in\N$. An \textbf{$s$-summable spectral triple} $(\A,\H,D)$ consists of a separable Hilbert space $\H$, a self-adjoint operator $D$ in $\H$ and a unital *-algebra $\A\subseteq B(\H)$, such that, for all $a\in\A$, $a\operatorname{dom} D\subseteq\operatorname{dom} D$ and $[D,a]$ extends to a bounded operator, and $(D-i)^{-1}\in\mathcal{S}^s$.
\end{defn}


Throughout this section, we let $(\A,\H,D)$ be an $s$-summable spectral triple for $s\in\N$, and we let $f\in\W^{s,n}$ for $n\in\N_0$, unless stated otherwise.

\begin{defn}\label{def:bracket}
	Define a multilinear function $\br{\cdot}:B(\H)^{\times n}\to\C$ by
	\begin{align}\label{eq:br cycl}
	\br{V_1,\ldots,V_n}:=\sum_{j=1}^n\tr( T^D_{f^{[n]}}(V_j,\ldots,V_n,V_1,\ldots,V_{j-1})).
	\end{align}
\end{defn}

For our algebraic results (which make up most of Section \ref{sct:CC and UF underlying the SA} and \textsection\ref{sct:truncated expansion}) we only need two simple properties of the bracket $\br{\cdot}$, stated in the following lemma. After proving this lemma, all analytical subtleties (related to the unboundedness of $D$) are taken care of, and we can focus on the algebra that ensues from these simple rules.
\begin{lem}\label{cycl bracket}
For $V_1,\ldots,V_n\in\mB(\H)$ and $a\in\A$ we have
\begin{enumerate}[label=(\Roman*)]
	\item $\br{V_1,\ldots,V_n}=\br{V_n,V_1,\ldots,V_{n-1}},$\label{cyclicity}
	\item $\br{V_1,\ldots,aV_j,\ldots,V_n}-\br{V_1,\ldots,V_{j-1}a,\ldots,V_n}=\br{V_1,\ldots,V_{j-1},[D,a],V_j,\ldots,V_n}$,\label{commutation}
\end{enumerate}
          where it is understood that for the edge case $j=1$ we need to substitute $n$ for $j-1$ on the left-hand side, and $f\in\W^{s,n+1}$ is assumed to define the right-hand side. 
\end{lem}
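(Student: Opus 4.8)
The plan is to prove both properties directly from the definition of the bracket in \eqref{eq:br cycl}, reducing each to an identity for the trace of the multiple operator integral $T^D_{f^{[n]}}$.

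For part \ref{cyclicity}, the statement is almost immediate from the form of the definition. The bracket $\br{V_1,\ldots,V_n}$ is defined as a cyclic sum: it sums the trace of $T^D_{f^{[n]}}$ over all $n$ cyclic permutations of the arguments $(V_1,\ldots,V_n)$. Performing the cyclic relabelling $(V_1,\ldots,V_n)\mapsto(V_n,V_1,\ldots,V_{n-1})$ merely reindexes which cyclic rotation is called the ``$j$-th'' one, permuting the $n$ summands among themselves. Thus the sum is manifestly invariant, and I would spell this out by matching the $j$-th term on one side with the $(j+1)$-th (mod $n$) term on the other.

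For part \ref{commutation}, the key analytical input is a product/commutator rule for the multiple operator integral of the form
\begin{align*}
	T^D_{f^{[n-1]}}(V_1,\ldots,aV_j,\ldots,V_{n-1})-T^D_{f^{[n-1]}}(V_1,\ldots,V_ja,\ldots,V_{n-1})=T^D_{f^{[n]}}(V_1,\ldots,V_{j-1},[D,a],V_j,\ldots,V_{n-1}),
\end{align*}
expressing that moving the algebra element $a$ past a slot, at the cost of commuting it with $D$, raises the order of the operator integral by one and inserts $[D,a]$ in the new slot. This is the standard rule that underlies the relation $[D,a]=Da-aD$ interacting with the exponentials $e^{itsD}$ appearing in \eqref{def:Tfn}; concretely one uses that $ae^{itsD}$ and $e^{itsD}a$ differ by an integral of $e^{it's D}[D,a]e^{it''sD}$, which after integrating over the simplex produces the higher divided difference. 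This is why $f\in\W^{s,n+1}$ must be assumed to make the right-hand side meaningful. The hard part is organizing this so that, after summing over the $n$ cyclic rotations defining the bracket, the telescoping works out and the various boundary contributions (including the wrap-around edge case $j=1$, where $V_{j-1}$ means $V_n$) combine into exactly the single term $\br{V_1,\ldots,V_{j-1},[D,a],V_j,\ldots,V_n}$ on the right.

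The main obstacle I anticipate is bookkeeping rather than analysis: the analytical facts (trace-class bounds, the commutator rule for $T^D_{f^{[n]}}$) are supplied by Theorem \ref{thm:Schatten estimate} and the identities established in the preliminary sections, so that convergence and the legitimacy of the trace are already guaranteed. What requires care is tracking the cyclic index $j$ through the commutation rule, since applying the rule inside each of the $n$ rotated arguments of the bracket produces two families of terms whose cancellation must be verified, and the edge-case substitution at $j=1$ must be handled consistently with the cyclicity from part \ref{cyclicity}. I would therefore prove \ref{cyclicity} first and then use it freely to simplify the index manipulations in \ref{commutation}.
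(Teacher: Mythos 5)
Your skeleton matches the paper's proof: part \ref{cyclicity} is read off directly from Definition \ref{def:bracket}, and part \ref{commutation} is reduced to commutation identities for $T^D_{f^{[n]}}$ followed by bookkeeping over the cyclic sum, with the $j=1$ case done first and general $j$ recovered from \ref{cyclicity}. The genuine gap is in where you locate the difficulty. The commutator rule is \emph{not} ``supplied by Theorem \ref{thm:Schatten estimate} and the identities established in the preliminary sections'': no such identity appears before this lemma, and proving it for arbitrary bounded $V_1,\ldots,V_n$ is precisely the analytic content of the paper's proof. The paper establishes the interior insertion rule together with the two edge versions
\begin{align*}
T^D_{f^{[n]}}(aV_1,\ldots,V_n)-aT^D_{f^{[n]}}(V_1,\ldots,V_n)&=T^D_{f^{[n+1]}}([D,a],V_1,\ldots,V_n),\\
T^D_{f^{[n]}}(V_1,\ldots,V_n)a-T^D_{f^{[n]}}(V_1,\ldots,V_na)&=T^D_{f^{[n+1]}}(V_1,\ldots,V_n,[D,a])
\end{align*}
by writing out the definitions for rank-one operators, and then extends them to all of $\mB(\H)$ using the strong density of finite-rank operators together with Theorem \ref{thm:continuity for L's}, the $\L^\infty\times\cdots\times\L^\infty\to\L^1$ continuity result which is exactly where the $s$-summability enters this lemma. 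Theorem \ref{thm:Schatten estimate} only guarantees that the traces involved are finite; it yields no algebraic identities. Note also that the edge identities above, combined with the tracial cancellation $\tr\big(aT^D_{f^{[n]}}(V_1,\ldots,V_n)\big)=\tr\big(T^D_{f^{[n]}}(V_1,\ldots,V_n)a\big)$, are indispensable for the wrap-around term in the cyclic sum; your ``telescoping'' cannot close without naming them.

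Two further points. First, your displayed rule is wrong as written: the subtracted term must be $T^D_{f^{[n-1]}}(V_1,\ldots,V_{j-1}a,V_j,\ldots,V_{n-1})$, with $a$ attached to the right of $V_{j-1}$, not $V_ja$. The rule moves $a$ across one exponential factor, never across an argument: moving $a$ past $V_j$ costs a commutator $[a,V_j]$, not $[D,a]$. Second, your Duhamel sketch could in principle be developed into an alternative to the paper's rank-one-plus-continuity argument, and would even work directly for all bounded $V_i$: the formula $e^{itsD}a-ae^{itsD}=i\int_0^{ts}e^{i(ts-r)D}[D,a]e^{irD}\,dr$ is legitimate because $a$ preserves $\operatorname{dom}D$ and $[D,a]$ extends boundedly, and after rescaling the inserted integral the factor $it\,\widehat{f^{(n)}}(t)=\widehat{f^{(n+1)}}(t)$ together with the push-forward of the simplex measure reproduces the representation of $f^{[n+1]}$ from Lemma \ref{lem:divided diff} (this is indeed where $f\in\W^{s,n+1}$ enters, as you note). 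But to count as a proof you would still have to justify the operator-valued Duhamel formula, the Fubini interchange, and the fact that \eqref{def:Tfn} is independent of the chosen representation of the symbol (the paper relies on \cite[Lemma 4.3]{ACDS09} for this); none of that is ``already guaranteed'', so as it stands the key identity of the lemma remains unproven in your proposal.
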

\begin{proof}
	Property \ref{cyclicity} follows immediately from Definition \ref{def:bracket}. By writing out the definitions for rank-1 operators $V_1,\ldots,V_n$, we have,
	\begin{align}
		&T^D_{f^{[n]}}(V_1,\ldots,V_j,aV_{j+1},\ldots,V_n)-T^D_{f^{[n]}}(V_1,\ldots,V_ja,V_{j+1},\ldots,V_n)\nonumber\\
		&\quad=T^D_{f^{[n+1]}}(V_1,\ldots,V_j,[D,a],V_{j+1},\ldots,V_n),\label{com1}
	\end{align}
	and the two edge cases,
	\begin{align}\label{com2}
		T^D_{f^{[n]}}(aV_1,\ldots,V_n)-aT^D_{f^{[n]}}(V_1,\ldots,V_n)&=T^D_{f^{[n+1]}}([D,a],V_1,\ldots,V_n),\\
		T^D_{f^{[n]}}(V_1,\ldots,V_n)a-T^D_{f^{[n]}}(V_1,\ldots,V_na)&=T^D_{f^{[n+1]}}(V_1,\ldots,V_n,[D,a]).\label{com3}
	\end{align}
	By Theorem \ref{thm:continuity for L's}, and the fact that the finite-rank operators lie strongly dense in $B(\H)$, we find that formulas \eqref{com1} , \eqref{com2} and \eqref{com3}
        hold for all $V_1,\ldots,V_n\in B(\H)$. Hence,
	\begin{align*}
		&\br{aV_1,V_2,\ldots,V_n}-\br{V_1,V_2,\ldots,V_na}\\
		&\quad = \sum_{j=2}^{n}\tr (T^D_{f^{[n]}}(V_j,\ldots,V_n,[D,a],V_1,\ldots,V_{j-1}))\\
		&\qquad +\tr(T^D_{f^{[n]}}(aV_1,\ldots,V_n))-\tr(T^D_{f^{[n]}}(V_1,\ldots,V_na))\\
		&\quad=	\sum_{j=2}^{n}\tr (T^D_{f^{[n+1]}}(V_{j},\ldots,V_n,[D,a],V_1,\ldots,V_{j-1}))\\
		&\qquad +\tr(T^D_{f^{[n+1]}}([D,a],V_1,\ldots,V_n))+\tr(aT^D_{f^{[n]}}(V_1,\ldots,V_n))-\tr(T^D_{f^{[n]}}(V_1,\ldots,V_na))\\
		&\quad=	\sum_{j=2}^{n}\tr (T^D_{f^{[n+1]}}(V_{j},\ldots,V_n,[D,a],V_1,\ldots,V_{j-1}))\\
		&\qquad +\tr(T^D_{f^{[n+1]}}([D,a],V_1,\ldots,V_n))+\tr(T^D_{f^{[n+1]}}(V_1,\ldots,V_n,[D,a]))\\
		&\quad= \br{[D,a],V_1,\ldots,V_n},
	\end{align*}
	and therefore \ref{commutation} follows by applying \ref{cyclicity}.
\end{proof}

\begin{rem}
Under additional assumptions -- for instance when $V_1,\ldots, V_n\in\S^1$ and $f\in\W^{s,n}$ is such that $f'$ is compactly supported and analytic in a region of $\C$ containing a rectifiable curve $\gamma$ which surrounds the support of $f$ in $\R$ -- we have
	\begin{align*}
		\br{V_1,\ldots,V_n}= \frac{1}{2\pi i}\tr\oint_\gamma f'(z)\prod_{j=1}^n V_j(z-D)^{-1}.
	\end{align*}
This occurs in \cite[Corl. 20]{Sui11} in the case where $V_1=V_2=\cdots=V_n$. It would be interesting to confront these resolvent formulas with the ones appearing in the work of Paycha \cite{Pay07}.
\end{rem}

\subsection{Hochschild and cyclic cocycles}
When the above brackets $\br{\cdot}$ are evaluated at one-forms $a[D,b]$ associated to a spectral triple, the relations found in Lemma \ref{cycl bracket} can be translated nicely in terms of the coboundary operators appearing in cyclic cohomology. This is very similar to the structure appearing in the context of index theory, see for instance \cite{GS89,Hig06}.

Let us start by recalling the definition of the boundary operators $b$ and $B$ from \cite{C85}.

\begin{defn}
If $\A$ is an algebra, and $n\in\N_0$, we define the space of {\em Hochschild $n$-cochains}, denoted by $\mathcal{C}^n(\A)$, as the space of $(n+1)$-linear
functionals $\phi$ on $\A$ with the property that if $a_j =1$ for some $j \geq 1$, then $\phi(a_0,\ldots,a_n) = 0$. Define operators $b : \mathcal{C}^{n}(\A) \to \mathcal{C}^{n+1}(\A)$ and $B: \mathcal{C}^{n+1}(\A) \to \mathcal{C}^{n}(\A)$ by
\begin{align*}
b\phi(a_0, a_1,\dots, a_{n+1})
:=& \sum_{j=0}^n (-1)^j \phi(a_0,\dots, a_j a_{j+1},\dots, a_{n+1})\\
& + (-1)^{n+1} \phi(a_{n+1} a_0, a_1,\dots, a_n) ,\\
B \phi(a_0 ,a_1, \ldots, a_n) :=& 
\sum_{j=0}^n (-1)^{nj}\phi(1,a_j,a_{j+1},\ldots, a_{j-1}).
\end{align*}
\end{defn}
Note that $B = A B_0$ in terms of the operator $A$ of cyclic anti-symmetrization and the operator defined by $B_0 \phi (a_0, a_1, \ldots, a_n) = \phi(1,a_0, a_1,\ldots, a_n)$.

One may check that the pair $(b,B)$ defines a double complex, {\em i.e.} $b^2 = 0, B^2=0$ and $bB +Bb =0$. Hochschild cohomology then arises as the cohomology of the complex  $(\mathcal{C}^n(\A),b)$, while the for us relevant {\em periodic cyclic cohomology} is defined as the cohomology of the totalization of the $(b,B)$-complex. That is to say, 
\begin{align*}
\mathcal{C}^\ev(\A) = \bigoplus_k \mathcal{C}^{2k} (\A) ; \qquad \mathcal{C}^{\odd}(\A) = \bigoplus_k \mathcal{C}^{2k+1} (\A),
\end{align*}
form a complex with differential $b+B$ and the cohomology of this complex is called periodic cyclic cohomology. We will also refer to a periodic cyclic cocycle as a $(b,B)$-cocycle. Explicitly, an odd $(b,B)$-cocycle is thus given by a sequence
$$
(\phi_1, \phi_3, \phi_5, \ldots),
$$
where $\phi_{2k+1} \in \mathcal{C}^{2k+1}(\A)$ and 
$$
b \phi_{2k+1} + B \phi_{2k+3} = 0 ,
$$
for all $k \geq 0$, and also $B \phi_1 = 0$. An analogous statement holds for even $(b,B)$-cocycles.

\subsection{Cyclic cocycles associated to multiple operator integrals}

We define the following Hochschild $n$-cochain:
\begin{align}\label{eq:def phi_n}
	\phi_n(a_0,\ldots,a_n):=\br{a_0[D,a_1],[D,a_2],\ldots,[D,a_{n}]} \qquad  (a_0, \ldots, a_n \in \A).
\end{align}
	We easily see that $B_0\phi_n$ is invariant under cyclic permutations, so that $B\phi_n=nB_0\phi_n$ for odd $n$ and $B\phi_n=0$ for even $n$. Also, $\phi_n(a_0,\ldots,a_n)=0$ when $a_j=1$ for some $j\geq1$. We put $\phi_0:=0$.
\begin{lem}\label{lem:b}
	We have $b\phi_n=\phi_{n+1}$ for odd $n$ and we have $b\phi_n=0$ for even $n$.
\end{lem}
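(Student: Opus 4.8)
The plan is to compute $b\phi_n$ directly from its definition and reduce everything to the two properties of the bracket established in Lemma \ref{cycl bracket}. Writing out
\begin{align*}
b\phi_n(a_0,\ldots,a_{n+1}) =& \sum_{j=0}^n (-1)^j \phi_n(a_0,\ldots,a_ja_{j+1},\ldots,a_{n+1})\\
& + (-1)^{n+1}\phi_n(a_{n+1}a_0,a_1,\ldots,a_n),
\end{align*}
I would substitute the definition \eqref{eq:def phi_n} into each term. The $j=0$ term produces $\br{a_0a_1[D,a_2],[D,a_3],\ldots}$, while the interior terms $1\le j\le n$ each involve a commutator $[D,a_ja_{j+1}]=a_j[D,a_{j+1}]+[D,a_j]a_{j+1}$ by the Leibniz rule. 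The wrap-around term produces $\br{a_{n+1}a_0[D,a_1],[D,a_2],\ldots}$.

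First I would apply the Leibniz rule to every product $a_ja_{j+1}$ sitting inside a commutator, expanding each interior term into two brackets. The key observation is that the resulting brackets are exactly of the form to which property \ref{commutation} of Lemma \ref{cycl bracket} applies: each factor $[D,a_j]a_{j+1}$ or $a_j[D,a_{j+1}]$ reassembles, via \ref{commutation}, into a difference of brackets with the scalar $a_{j+1}$ (or $a_j$) moved across an adjacent slot, plus a compensating bracket containing an extra $[D,\cdot]$. The strategy is then a telescoping/cancellation argument: the signs $(-1)^j$ are arranged so that consecutive terms cancel in pairs, and after applying cyclicity \ref{cyclicity} to handle the wrap-around term one is left with a single surviving expression. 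For odd $n$ this surviving expression should be precisely $\phi_{n+1}(a_0,\ldots,a_{n+1})=\br{a_0[D,a_1],\ldots,[D,a_{n+1}]}$, whereas for even $n$ the signs are arranged so that everything cancels and one gets $0$.

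The cleanest way to organize this is probably to note that $b\phi_n$ measures the failure of $\phi_n$ to be a trace-like/cyclic object, and that properties \ref{cyclicity} and \ref{commutation} together say precisely that the bracket behaves like a graded cyclic cocycle with respect to the differential $a\mapsto[D,a]$. Concretely, I expect the interior Leibniz terms to split into a part that combines each $a_0[D,a_1]$-type leading factor with the adjacent $[D,\cdot]$ entries (rebuilding higher $\phi$-terms) and a part that moves elements around cyclically to cancel against neighbours. Tracking the sign $(-1)^j$ through each application of \ref{commutation} is where the even/odd dichotomy enters: the accumulated sign after a full pass around the $n+1$ arguments is $(-1)^{n}$ or $(-1)^{n+1}$, which explains why the wrap-around term reinforces the sum for odd $n$ (giving $\phi_{n+1}$) but cancels it for even $n$ (giving $0$).

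The main obstacle I anticipate is purely bookkeeping: keeping careful track of the signs and of which arguments land in which slot as one repeatedly applies property \ref{commutation} to slide algebra elements past the $[D,\cdot]$ entries. The edge cases — the $j=0$ term and the wrap-around $j=n$ term, which by the convention in Lemma \ref{cycl bracket} require substituting $n$ for $j-1$ — need separate care to ensure the cyclicity \ref{cyclicity} is applied correctly so that the leading factor $a_0$ stays in the $0$-slot. A useful sanity check along the way is the already-noted fact that $\phi_n(a_0,\ldots,a_n)=0$ whenever some $a_j=1$ with $j\ge1$, since $[D,1]=0$; this guarantees the expressions stay inside $\mathcal{C}^{n+1}(\A)$ and lets me discard any term in which a unit would appear in an interior slot. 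Once the cancellations are set up, verifying the two cases is a short computation whose only subtlety is the sign arithmetic.
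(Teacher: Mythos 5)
Your plan is correct and, for the odd case, is essentially the paper's own proof: expand $b\phi_n$ from the definition, split each interior entry $[D,a_ja_{j+1}]$ by the Leibniz rule, recombine adjacent halves with property \ref{commutation} of Lemma \ref{cycl bracket}, and treat the wrap-around with the edge case of \ref{commutation} followed by cyclicity \ref{cyclicity}. One point of language: the Leibniz halves do not ``cancel in pairs''; rather, the $a_j[D,a_{j+1}]$-half of term $j$ and the $[D,a_{j-1}]a_j$-half of term $j-1$ merge via \ref{commutation} into a full copy of $\phi_{n+1}(a_0,\ldots,a_{n+1})$ carrying the sign $(-1)^j$ (only the $j=0$ term cancels outright, against half of the $j=1$ term), so that uniformly in the parity of $n$ one gets
\begin{align*}
b\phi_n \;=\; \Big(\sum_{j=2}^{n}(-1)^j \,+\, (-1)^{n+1}\Big)\,\phi_{n+1},
\end{align*}
whose prefactor is $0+1=1$ for odd $n$ and $1-1=0$ for even $n$; this is exactly your sign arithmetic, and it does close the argument for both parities. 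The only genuine divergence from the paper is that the paper never computes the even case at all: since $\phi_0:=0$ gives $b\phi_0=0$, and $b^2=0$, the odd-case identity $b\phi_n=\phi_{n+1}$ immediately forces $b\phi_{n+1}=b^2\phi_n=0$ for every even index. Your direct verification works, but the paper's observation makes the even statement a purely formal consequence of the odd one, with no extra sign bookkeeping.
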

\begin{proof}
	As $b\phi_0=0$ by definition, and $b^2=0$, we need only check the case in which $n$ is odd.
	
	We find, by splitting up the sum, and shifting the second appearing sum by one, that
	\begin{align*}
		&b\phi_n(a_0,\ldots,a_{n+1})\\
		&\quad=\br{a_0a_1[D,a_1],[D,a_2],\ldots,[D,a_{n+1}]}
		-\br{a_0a_1[D,a_1],[D,a_2],\ldots,[D,a_{n+1}]}\\
		&\qquad+\sum_{j=2}^n(-1)^j\br{a_0[D,a_1],[D,a_2],\ldots,a_j[D,a_{j+1}],\ldots,[D,a_{n+1}]}\\
		&\qquad-\sum_{j=2}^{n+1}(-1)^j\br{a_0[D,a_1],[D,a_2],\ldots,[D,a_{j-1}]a_j,\ldots,[D,a_{n+1}]}\\
		&\qquad+\br{a_{n+1}a_0[D,a_1],[D,a_2],\ldots,[D,a_n]}\\
		&\quad=\sum_{j=2}^n(-1)^j\br{a_0[D,a_1],[D,a_2],\ldots,[D,a_{n+1}]}
		-\br{a_0[D,a_1],[D,a_2],\ldots,[D,a_n]a_{n+1}}\\
		&\qquad+\br{a_{n+1}a_0[D,a_1],\ldots,[D,a_n]}\\
		&\quad=\br{[D,a_{n+1}],a_0[D,a_1],[D,a_2],\ldots,[D,a_{n}]}\\
		&\quad=\phi_{n+1}(a_0,\ldots,a_{n+1}),
	\end{align*}
	by \ref{cyclicity} and \ref{commutation} of Lemma \ref{cycl bracket}.
\end{proof}
\begin{lem}
Let $n$ be even. We have $bB_0\phi_n=2\phi_n-B_0\phi_{n+1}$.
\end{lem}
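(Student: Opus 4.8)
The plan is to expand $bB_0\phi_n$ explicitly and reorganize the resulting terms using only the two rules of Lemma \ref{cycl bracket}. First I would record the two cochains on the right-hand side: since $B_0\psi(a_0,\ldots)=\psi(1,a_0,\ldots)$ and $1\cdot[D,a_0]=[D,a_0]$, one has $B_0\phi_n(a_0,\ldots,a_{n-1})=\br{[D,a_0],\ldots,[D,a_{n-1}]}$ and $B_0\phi_{n+1}(a_0,\ldots,a_n)=\br{[D,a_0],\ldots,[D,a_n]}$, so that in both every slot is a \emph{pure} commutator. Writing out $b(B_0\phi_n)(a_0,\ldots,a_n)$ from the definition of $b$ produces the $n$ interior terms $(-1)^j\br{[D,a_0],\ldots,[D,a_ja_{j+1}],\ldots,[D,a_n]}$ for $j=0,\ldots,n-1$, together with the wrap-around term $(-1)^n\br{[D,a_na_0],[D,a_1],\ldots,[D,a_{n-1}]}$ (and $(-1)^n=1$ since $n$ is even). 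Applying the Leibniz rule $[D,xy]=x[D,y]+[D,x]y$ to each merged commutator splits every term into two; I denote the two pieces of the $j$-th interior term by $T_j^{(1)}$ (the $x[D,y]$ piece) and $T_j^{(2)}$ (the $[D,x]y$ piece), and likewise the two pieces of the wrap-around term by $T_{\mathrm{last}}^{(1)}$ and $T_{\mathrm{last}}^{(2)}$.

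The key step is a telescoping. For $0\le j\le n-2$ the pair $T_j^{(2)}+T_{j+1}^{(1)}$ differs only in the block at positions $j+1,j+2$, where it reads $\big([D,a_j]a_{j+1},[D,a_{j+2}]\big)$ versus $\big([D,a_j],a_{j+1}[D,a_{j+2}]\big)$; rule \ref{commutation}, applied with inserted element $a=a_{j+1}$, collapses this difference into a single pure-commutator bracket, giving $T_j^{(2)}+T_{j+1}^{(1)}=(-1)^{j+1}B_0\phi_{n+1}(a_0,\ldots,a_n)$. Summing over $j=0,\ldots,n-2$ and using that $n$ is even, the alternating signs add up to $-B_0\phi_{n+1}$.

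It then remains to treat the four boundary contributions left unpaired: $T_0^{(1)}$, $T_{n-1}^{(2)}$, and $T_{\mathrm{last}}^{(1)},T_{\mathrm{last}}^{(2)}$. The first is immediate: $T_0^{(1)}=\br{a_0[D,a_1],[D,a_2],\ldots,[D,a_n]}=\phi_n$. For the remaining three I would use cyclicity \ref{cyclicity} to rotate each bracket into a convenient position and then apply \ref{commutation}, including its edge-case form where the multiplier wraps from the last slot to the first. Combining $T_{n-1}^{(2)}$ with $T_{\mathrm{last}}^{(1)}$ reconstructs $B_0\phi_{n+1}$, while $T_{\mathrm{last}}^{(2)}$ alone yields $\phi_n-B_0\phi_{n+1}$. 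Adding the five contributions gives $\phi_n + (-B_0\phi_{n+1}) + B_0\phi_{n+1} + (\phi_n - B_0\phi_{n+1}) = 2\phi_n - B_0\phi_{n+1}$, which is the claim.

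I expect the genuine difficulty to be purely combinatorial: keeping the signs $(-1)^j$ and the slot indices aligned through the Leibniz split and the telescoping, and correctly invoking the edge-case convention of \ref{commutation} for the wrap-around term. The parity of $n$ is precisely what makes the interior signs cancel to $-B_0\phi_{n+1}$ and the wrap-around sign come out to $+1$, so the evenness hypothesis enters only through these bookkeeping points. No new analytic input is needed, since Lemma \ref{cycl bracket} has already absorbed all the unboundedness of $D$; one only needs $f\in\W^{s,n+1}$ throughout, so that the order-$(n+1)$ brackets defining $\phi_{n+1}$ and $B_0\phi_{n+1}$ are well defined.
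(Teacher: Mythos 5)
Your proof is correct and follows essentially the same route as the paper's own argument: expand $bB_0\phi_n$, split each merged commutator by the Leibniz rule, telescope the interior pairs via the commutation rule \ref{commutation} into copies of $B_0\phi_{n+1}$ (summing to $-B_0\phi_{n+1}$ by parity), and resolve the boundary terms $T_0^{(1)}$, $T_{n-1}^{(2)}+T_{\mathrm{last}}^{(1)}$, and $T_{\mathrm{last}}^{(2)}$ exactly as the paper does using the edge case of \ref{commutation} and cyclicity \ref{cyclicity}. The only cosmetic difference is that the paper keeps the wrap-around term unsplit one step longer before applying Leibniz; the bookkeeping of signs and slots is otherwise identical.
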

\begin{proof}
	Splitting the sum in two, and shifting the index of the second sum, we find
	\begin{align*}
		bB_0\phi_n(a_0,\ldots,a_n)&=\sum_{j=0}^{n-1}(-1)^j\br{[D,a_0],\ldots,a_j[D,a_{j+1}],\ldots,[D,a_n]}\\
		&\quad-\sum_{j=1}^n(-1)^j\br{[D,a_0],\ldots,[D,a_{j-1}]a_j,\ldots,[D,a_n]}+\br{[D,a_na_0],\ldots,[D,a_{n-1}]}\\
		&=\br{a_0[D,a_1],[D,a_2],\ldots,[D,a_n]}+\sum_{j=1}^{n-1}(-1)^j\br{[D,a_0],\ldots,[D,a_n]}\\
		&\quad-\br{[D,a_0],\ldots,[D,a_{n-2}],[D,a_{n-1}]a_n}+\br{[D,a_na_0],\ldots,[D,a_{n-1}]}\\
		&=\phi_n(a_0,\ldots,a_n)-\br{[D,a_0],\ldots,[D,a_n]}+\br{[D,a_n],[D,a_0],\ldots,[D,a_{n-1}]}\\
		&\quad+\br{[D,a_n]a_0,[D,a_1],\ldots,[D,a_{n-1}]}\\
		&=2\phi_n(a_0,\ldots,a_n)-B_0\phi_{n+1}(a_0,\ldots,a_n),
	\end{align*}
	by using both properties of the bracket $\br{\cdot}$ in the last step.
\end{proof}

Motivated by this we define 
	$$\psi_{2k-1}:=\phi_{2k-1}-\tfrac{1}{2}B_0\phi_{2k},$$
so that
$$B\psi_{2k+1}=2(2k+1)b\psi_{2k-1}.$$
We can rephrase this property in terms of the $(b,B)$-complex as follows. 
\begin{prop}
  \label{prop:bB}
  Let $\phi_n$ and $\psi_{2k-1}$ be as defined above and set
  $\tilde{\psi}_{2k-1}:=(-1)^{k-1}\frac{(k-1)!}{(2k-1)!}\psi_{2k-1}$.
  \begin{enumerate}
  \item The sequence $(\phi_{2k})$ is a $(b,B)$-cocycle and each $\phi_{2k}$ defines an even Hochschild cocycle: $b \phi_{2k} = 0$. 
    \item The sequence $(\tilde \psi_{2k-1})$ is an odd $(b,B)$-cocycle. 
    \end{enumerate}
  \end{prop}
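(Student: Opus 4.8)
The plan is to read both statements off the recursion lemmas already in hand, so that the argument is almost entirely bookkeeping, with a single analytic ingredient saved for the end. (Throughout I assume $f$ is differentiable enough that all $\phi_n,\psi_n$ and the relations of Lemma \ref{cycl bracket} make sense at every order.) For the first part I would argue directly: Lemma \ref{lem:b} gives $b\phi_{2k}=0$, which is exactly the statement that each $\phi_{2k}$ is a Hochschild cocycle; together with $B\phi_{2k}=0$ (the even case of the cyclic-invariance observation) this shows that the singleton sequence $(0,\ldots,0,\phi_{2k},0,\ldots)$ satisfies $b(\cdot)+B(\cdot)=0$ in every odd degree, so it is its own even $(b,B)$-cocycle, and summing over $k$ yields that $(\phi_{2k})$ is an even $(b,B)$-cocycle (the required relation $b\phi_{2k}+B\phi_{2k+2}=0$ holding because both terms vanish).

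For the second part the two conditions to check are the ladder relations $b\tilde\psi_{2k-1}+B\tilde\psi_{2k+1}=0$ for $k\geq1$ and the edge condition $B\tilde\psi_1=0$. I would first dispatch the ladder. Writing $\tilde\psi_{2k-1}=c_k\psi_{2k-1}$ with $c_k=(-1)^{k-1}\frac{(k-1)!}{(2k-1)!}$ and inserting the relation $B\psi_{2k+1}=2(2k+1)b\psi_{2k-1}$ recorded just above the proposition, the left-hand side collapses to $\big(c_k+2(2k+1)c_{k+1}\big)b\psi_{2k-1}$; the whole question is then the numerical identity $c_{k+1}=-c_k/\big(2(2k+1)\big)$, which is immediate from the formula for $c_k$. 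This is precisely the role of the normalization constants: they are chosen to upgrade the proportionality $B\psi_{2k+1}\propto b\psi_{2k-1}$ into a genuine cocycle condition.

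The delicate step, which I expect to be the main obstacle, is the remaining edge condition $B\tilde\psi_1=0$; it is not covered by the recursion, since the relation $B\psi_{2k+1}=2(2k+1)b\psi_{2k-1}$ degenerates at $k=0$. I would compute it directly: using $c_1=1$ we get $B\tilde\psi_1=B\psi_1=B\phi_1-\tfrac12 BB_0\phi_2=B_0\phi_1$, where $B\phi_1=B_0\phi_1$ by the odd case and $BB_0\phi_2=AB_0^2\phi_2=0$ because $B_0^2\phi_2(a_0)=\phi_2(1,1,a_0)=0$ by the Hochschild normalization of $\phi_2$. Hence $B\tilde\psi_1(a_0)=B_0\phi_1(a_0)=\br{[D,a_0]}$, and the claim reduces to the vanishing of this one-point bracket. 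This does \emph{not} follow from the formal rules \ref{cyclicity} and \ref{commutation} of Lemma \ref{cycl bracket} alone; one must use that $\br{\cdot}$ originates from a trace. Tracing \eqref{def:Tfn} and applying $\int_\R\widehat{f'}(t)e^{itD}\,dt=f'(D)$ yields $\br{V}=\tr(T^D_{f^{[1]}}(V))=\tr(f'(D)V)$, so that $\br{[D,a]}=\tr(f'(D)[D,a])=0$ by cyclicity of the trace together with $[f'(D),D]=0$. I would flag this tadpole vanishing as the crux of the proposition, being the only place where the argument invokes the analytic origin of the bracket rather than its purely algebraic coboundary relations.
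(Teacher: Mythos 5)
Your proposal is correct, and on every point the paper makes explicit it takes the same route: part 1 is exactly Lemma \ref{lem:b} together with the observation $B\phi_{2k}=0$, and the ladder relations $b\tilde\psi_{2k-1}+B\tilde\psi_{2k+1}=0$ follow, as you say, from the displayed identity $B\psi_{2k+1}=2(2k+1)b\psi_{2k-1}$ once one checks $c_{k+1}=-c_k/\big(2(2k+1)\big)$, which your computation verifies. The genuine difference is your treatment of the edge condition $B\tilde\psi_1=0$: the paper states the proposition without proof and nowhere records this verification, while you correctly reduce it to the vanishing of the pure tadpole $B_0\phi_1(a_0)=\br{[D,a_0]}$ (your intermediate identities $B\phi_1=B_0\phi_1$ and $BB_0\phi_2=0$ are right), and you are also right that this cannot be deduced from properties \ref{cyclicity} and \ref{commutation} of Lemma \ref{cycl bracket} alone, since those rules never decrease the length of a bracket; an analytic input about the trace is genuinely needed. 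Flagging and filling this step is a real improvement over the paper's exposition.

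One caveat on how you close that step. The formula $\br{V}=\tr(f'(D)V)$ requires interchanging $\tr$ with the integral in \eqref{def:Tfn}, whose integrand $e^{its_0D}Ve^{its_1D}$ is not trace class, and invoking ``cyclicity of the trace'' for $\tr(f'(D)[D,a])$ is delicate because $D$ is unbounded. Both points are repaired by computing the trace of $T^D_{f^{[1]}}([D,a])$ --- trace class by Theorem \ref{thm:Schatten estimate} --- in an orthonormal eigenbasis $\{\ket{n}\}$ of $D$, which exists because $(D-i)^{-1}$ is compact: from \eqref{def:Tfn} one gets $\bra{n}T^D_{f^{[1]}}([D,a])\ket{n}=f'(\lambda_n)\bra{n}[D,a]\ket{n}$, and $\bra{n}[D,a]\ket{n}=\lambda_n\bra{n}a\ket{n}-\lambda_n\bra{n}a\ket{n}=0$ since $\ket{n}$ and $a\ket{n}$ lie in $\operatorname{dom}D$ and $\lambda_n$ is real. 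Alternatively, one can use the length-zero analogue of \eqref{com2}--\eqref{com3}, namely $T^D_{f^{[1]}}([D,a])=[f(D),a]$, together with $f(D)\in\S^1$ (because $fu^s$ is bounded and $(D-i)^{-s}\in\S^1$), so that the trace vanishes by honest cyclicity for a trace-class operator against a bounded one.
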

We use an integral notation that is defined by linear extension of
	$$\int_\phi a_0da_1\cdots da_n :=\int_{\phi_n}a_0da_1\cdots da_n:=  \phi(a_0,a_1,\ldots,a_n),$$
and similarly for $\psi$.

\subsection{Derivatives of the spectral action in terms of universal forms}\label{sct:from V to A}
In this section we will express the derivatives of the fluctuated spectral action (occurring in the Taylor series) in terms of universal forms that are integrated along $\phi$. We thus make the jump from an expression in terms of $V=\pi_D(A)\in\Omega^1_D(\A)_\sa$ to an expression in terms of $A\in\Omega^1(\A)$. By \eqref{eq:nth derivative of SA} and the definition of $\br{V,\ldots,V}$, we have, for $n\in\N$,
\begin{align}
	\frac{1}{n!}\frac{d^n}{dt^n}\tr(f(D+tV))\big|_{t=0}&=\tr(\Tfn^D(V,\ldots,V))\nonumber\\
	&=\frac{1}{n}\br{V,\ldots,V}.\label{eq:<V,...,V>/n}
\end{align}
As $V$ decomposes as a finite sum $V=\sum a_j[D,b_j]$, our task is to express $\br{a_{j_1}[D,b_{j_1}],\ldots,a_{j_n}[D,b_{j_n}]}$ in terms of universal forms $a_0da_1\cdots da_n$ integrated along $\phi$. This will turn out to be possible by just using \ref{commutation} and $[D,a_1a_2]=a_1[D,a_2]+[D,a_1]a_2$. To find the exact expression we need to work in the algebra $M_2(\Omega^\bullet(\A))=M_2(\C)\otimes\Omega^\bullet(\A)$. 

\begin{prop}
	Let $n\in\N$. For $a_1,\ldots,a_n,b_1,\ldots,b_n\in\A$, denoting $A_j:=a_jdb_j$, we have
		$$\br{a_1[D,b_1],\ldots,a_n[D,b_n]}=\int_\phi \begin{pmatrix}
A_1&0
\end{pmatrix}\prod_{j=2}^n\begin{pmatrix}
A_j+dA_j&-A_j\\dA_j&-A_j
\end{pmatrix}\begin{pmatrix}
1\\0
\end{pmatrix}.$$
\end{prop}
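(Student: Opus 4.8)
The plan is to prove the matrix identity by induction on $n$, using only the two algebraic rules from Lemma~\ref{cycl bracket} together with the Leibniz rule $[D,ab]=a[D,b]+[D,a]b$. First I would unpack what the matrix product on the right actually encodes. Expanding the product $\prod_{j=2}^n \left(\begin{smallmatrix} A_j+dA_j & -A_j\\ dA_j & -A_j\end{smallmatrix}\right)$ and sandwiching between the row $(A_1\ 0)$ and column $\left(\begin{smallmatrix}1\\0\end{smallmatrix}\right)$ produces a signed sum of universal forms of the shape $a_1(db_1)\,\omega_2\cdots\omega_n$ where each $\omega_j$ is built from $A_j$ and $dA_j=da_j\,db_j$; the entries of the matrix are precisely chosen so that, after applying $\pi_D$, the "$dA_j$" slots become genuine commutators $[D,\cdot]$ and the "$A_j$" slots get absorbed into neighbouring factors. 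So the real content is that the bracket $\br{a_1[D,b_1],\ldots,a_n[D,b_n]}$, when one repeatedly uses $[D,a_jb_j]$-type manipulations to move the $a_j$'s around, reorganizes exactly into this $2\times 2$ transfer-matrix pattern.

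The key step is the inductive passage from $n-1$ to $n$. I would isolate the last argument $a_n[D,b_n]$ and apply property~\ref{commutation} of Lemma~\ref{cycl bracket} to commute the $a_n$ past the preceding entries, generating a term with $[D,a_n]$ inserted. Concretely, $\br{\ldots,a_{n-1}[D,b_{n-1}],a_n[D,b_n]}$ should be rewritten, via \ref{commutation}, as a combination of a bracket where $a_n$ has been merged into the $(n-1)$-th slot (changing $[D,b_{n-1}]$ into $[D,b_{n-1}]a_n$, i.e.\ modifying $A_{n-1}$) and a bracket with an extra $[D,a_n]$ entry (which together with the trailing $[D,b_n]$ assembles $dA_n = da_n\,db_n$). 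This bifurcation into "merge" and "differentiate" is exactly the two columns of the transfer matrix: the first column $(A_j+dA_j,\,dA_j)^T$ records the branch that keeps differentiating, and the second column $(-A_j,\,-A_j)^T$ records the branch where $a_j$ gets absorbed to the left. I would set up the induction with an auxiliary two-component quantity—matching the two-dimensional state the matrix acts on—so that the single scalar bracket is the first component and the second component tracks the "absorbed" configuration $\br{\ldots,[D,b_{n-1}]a_n\cdots}$ that feeds back in at the next step.

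The main obstacle I anticipate is bookkeeping the signs and the edge terms correctly, particularly handling the leftmost factor $(A_1\ 0)$ and the cyclic edge case $j=1$ flagged in Lemma~\ref{cycl bracket}. One must verify that the boundary vector $\left(\begin{smallmatrix}1\\0\end{smallmatrix}\right)$ correctly terminates the recursion (the final entry contributes $A_n$ rather than $-A_n$ because there is nothing further to absorb into) and that the initial row $(A_1\ 0)$ correctly seeds it (the first form is genuinely $A_1=a_1db_1$, with no differentiation applied to it). I would therefore recommend proving a slightly stronger statement that keeps both components of the state vector explicit throughout the induction, so that the matrix multiplication $\left(\begin{smallmatrix} A_j+dA_j & -A_j\\ dA_j & -A_j\end{smallmatrix}\right)$ emerges transparently as the one-step transition rule, and only at the very end project onto the first component by pairing with $\left(\begin{smallmatrix}1\\0\end{smallmatrix}\right)$. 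Once the two-component invariant is formulated, each inductive step reduces to a single application of \ref{commutation} plus the Leibniz rule, and the matrix identity follows by associativity of the product in $M_2(\Omega^\bullet(\A))$.
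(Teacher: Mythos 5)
Your proposal is correct and takes essentially the same approach as the paper: the paper formalizes your two-component transfer-matrix induction by extending $\br{\cdot}$ to the tensor algebra $T\mB(\H)$, where a single application of Lemma \ref{cycl bracket}\ref{commutation} together with the Leibniz rule produces exactly a $2\times 2$ one-step matrix, and the claimed form of that matrix then emerges by regrouping the factors (associativity in $M_2(\Omega^\bullet(\A))$), just as you anticipate. The only differences are cosmetic: the paper's two-dimensional state is the row vector $(a_j \ \ a_jb_j)$ of pending left multipliers (with the tail kept as a column vector of tensor-algebra elements) rather than a pair of scalar brackets, and the cyclic edge case $j=1$ you worry about never actually arises, since $a_1$ is never commuted and cyclicity \ref{cyclicity} is not needed in this proposition at all.
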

\begin{proof}
 If we combine, for every $n\in\N_0$, the $n$-multilinear function $\br{\cdot}$ from \eqref{eq:br cycl}, we obtain a linear function
	$$\br{\cdot}:T\mB(\H)\to\C$$
	on the tensor algebra $T\mB(\H)$. 
	For any $\omega,\nu\in T\mB(\H)$, a straightforward calculation using the commutation rule \ref{commutation} from Lemma \ref{cycl bracket} shows that
\begin{align}
	&\br{\omega\otimes a_{j-1}[D,b_{j-1}]\otimes\begin{pmatrix}
	a_j&a_jb_j
	\end{pmatrix}\nu}
	=\br{\omega\otimes\begin{pmatrix}
	a_{j-1}&a_{j-1}b_{j-1}
	\end{pmatrix}
	M_j\otimes\nu},\label{eq:look at the stars}
\end{align}
where $M_j\in M_2(T\mB(\H))$ is defined by
\begin{align}\label{eq:M_j}
	M_j:=\begin{pmatrix}
	[D,b_{j-1}a_j]+[D,b_{j-1}]\otimes[D,a_j]&[D,b_{j-1}a_jb_j]+[D,b_{j-1}]\otimes[D,a_jb_j]\\-[D,a_j]&-[D,a_jb_j]
	\end{pmatrix}.
\end{align}
Repeating \eqref{eq:look at the stars}, and subsequently using \eqref{eq:def phi_n}, it follows that
\begin{align*}
	\br{a_1[D,b_1],\ldots, a_n[D,b_n]}&=\br{a_1[D,b_1]\otimes\ldots\otimes a_{n-1}[D,b_{n-1}]\otimes\begin{pmatrix}a_n& a_nb_n\end{pmatrix}\begin{pmatrix}
	[D,b_n]\\0
	\end{pmatrix}}\\
	&=\br{\begin{pmatrix}
	a_1&a_1b_1
	\end{pmatrix}\bigg(\prod_{j=2}^nM_j\bigg)\begin{pmatrix}
	[D,b_n]\\0
	\end{pmatrix}}\\
	&=\int_\phi \begin{pmatrix}
	a_1&a_1b_1
	\end{pmatrix}\bigg(\prod_{j=2}^nN_j\bigg)\begin{pmatrix}
	db_n\\0
	\end{pmatrix},
\end{align*}
where from \eqref{eq:M_j} we obtain
\begin{align*}	
	N_j&=\begin{pmatrix}
	d(b_{j-1}a_j)+db_{j-1} da_j&d(b_{j-1}a_jb_j)+db_{j-1} d(a_jb_j)\\-da_j&-d(a_jb_j)
	\end{pmatrix}\\
	&=\begin{pmatrix}
	db_{j-1}&b_{j-1}\\
	0&-1
	\end{pmatrix}
	\begin{pmatrix}
	a_j+da_j&a_jb_j+da_jb_j+a_jdb_j\\
	da_j&da_jb_j+a_jdb_j
	\end{pmatrix}.
\end{align*}
By also writing $\begin{pmatrix}
	db_n\\0
	\end{pmatrix}=\begin{pmatrix}
	db_n&b_n\\0&-1
	\end{pmatrix}\begin{pmatrix}
	1\\0
	\end{pmatrix}$, we find that
\begin{align*}
	&\br{a_1[D,b_1],\ldots a_n[D,b_n]}\\
	&\quad=\int_\phi \begin{pmatrix}
	a_1&a_1b_1
	\end{pmatrix}\begin{pmatrix}
	db_{1}&b_{1}\\
	0&-1
	\end{pmatrix}\left(\prod_{j=2}^n\begin{pmatrix}
	a_j+da_j&a_jb_j+da_jb_j+a_jdb_j\\
	da_j&da_jb_j+a_jdb_j
	\end{pmatrix}\begin{pmatrix}
	db_{j}&b_{j}\\
	0&-1
	\end{pmatrix}\right)\begin{pmatrix}
	1\\0
	\end{pmatrix}\\
	&\quad=\int_\phi \begin{pmatrix}
	A_1&0
	\end{pmatrix}\left(\prod_{j=2}^n\begin{pmatrix}
	A_j+dA_j&-A_j\\
	dA_j&-A_j
	\end{pmatrix}\right)\begin{pmatrix}
	1\\0
	\end{pmatrix},
\end{align*}
	which concludes the proof.
\end{proof}

\begin{cor}\label{cor:2x2 matrix}
	If $n\in\N$, $A\in\Omega^1(\A)$ and $V:=\pi_D(A)\in\Omega^1_D(\A)$, then
\begin{align}\label{eq:2x2 matrix}
	\br{V,\ldots,V}=\int_\phi \begin{pmatrix}
	A & 0
	\end{pmatrix}
	\begin{pmatrix}
	A+dA & -A\\
	dA   & -A
	\end{pmatrix}^{n-1}\begin{pmatrix}
	1\\
	0
	\end{pmatrix}.
\end{align}
\end{cor}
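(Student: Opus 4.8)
The plan is to deduce the Corollary directly from the preceding Proposition, using only the multilinearity of the bracket $\br{\cdot}$ and the linearity of the integral $\int_\phi$. First I would write the general one-form as a finite sum $A=\sum_{j=1}^m a_j\,db_j$, so that $V=\pi_D(A)=\sum_{j=1}^m a_j[D,b_j]\in B(\H)$. Since $\br{\cdot}:B(\H)^{\times n}\to\C$ is $\C$-multilinear, expanding each of the $n$ slots gives
$$
\br{V,\ldots,V}=\sum_{j_1,\ldots,j_n=1}^m \br{a_{j_1}[D,b_{j_1}],\ldots,a_{j_n}[D,b_{j_n}]}.
$$

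Next I would apply the Proposition to each summand. Abbreviating $A_j:=a_j\,db_j$ and $M_j:=\begin{pmatrix} A_j+dA_j&-A_j\\ dA_j&-A_j\end{pmatrix}$, the Proposition rewrites each term as $\int_\phi \begin{pmatrix} A_{j_1}&0\end{pmatrix}M_{j_2}\cdots M_{j_n}\begin{pmatrix}1\\0\end{pmatrix}$. The key observation is that matrix multiplication is multilinear in the entries of its factors and the indices $j_1,\ldots,j_n$ range independently, so the multi-index sum factorizes slot by slot. Using linearity of $\int_\phi$, summing over $j_1$ replaces $\begin{pmatrix} A_{j_1}&0\end{pmatrix}$ by $\begin{pmatrix} A&0\end{pmatrix}$, while for each $k\geq 2$ summing over $j_k$ replaces $M_{j_k}$ by $\sum_{j}M_{j}=\begin{pmatrix} A+dA&-A\\ dA&-A\end{pmatrix}$, since $d$ is linear and $\sum_j A_j=A$. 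As the resulting $n-1$ factors are all equal, their product is the $(n-1)$-th power of the matrix, which yields
$$
\br{V,\ldots,V}=\int_\phi \begin{pmatrix} A&0\end{pmatrix}\begin{pmatrix} A+dA&-A\\ dA&-A\end{pmatrix}^{n-1}\begin{pmatrix}1\\0\end{pmatrix},
$$
as claimed.

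There is no genuine analytic obstacle here, since all the delicate work has been absorbed into the Proposition (which in turn rests on Lemma \ref{cycl bracket} and the continuity result Theorem \ref{thm:continuity for L's}). The only point that deserves a moment of care is that the distributive factorization is applied to matrices whose entries lie in the \emph{noncommutative} graded algebra $\Omega^\bullet(\A)$; this causes no difficulty precisely because the $n$ tensor slots are summed over independent indices, so the factorization of $\sum_{j_1,\ldots,j_n}$ into a product of single-index sums is valid irrespective of the noncommutativity of the entries.
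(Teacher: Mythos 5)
Your proof is correct and matches the paper's (implicit) argument: the corollary is stated there without proof precisely because it follows from the preceding Proposition by multilinearity of $\br{\cdot}$, linearity of $\int_\phi$ and of $d$, and the distributive factorization of the multi-index sum, exactly as you carry out. Your closing remark on why noncommutativity of $\Omega^\bullet(\A)$ is harmless (independent summation indices, order of factors preserved) is the right point of care.
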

	
\begin{ex}\label{ex:first terms}
	Using \eqref{eq:2x2 matrix}, we obtain in particular
\begin{align*}
	\br{V}&=\int_{\phi_1}A,\\
	\br{V,V}&=\int_{\phi_2}A^2+\int_{\phi_3}AdA,\\
	\br{V,V,V}&=\int_{\phi_3}A^3+\int_{\phi_4}AdAA+\int_{\phi_5}AdAdA,\\
	\br{V,V,V,V}&=\int_{\phi_4}A^4+\int_{\phi_5}(A^3dA+AdAA^2)+\int_{\phi_6}AdAdAA+\int_{\phi_7}AdAdAdA.
\end{align*}
	With \eqref{eq:<V,...,V>/n}, and in the sense of \eqref{eq:Taylor}, this implies that
	\begin{align*}
		\tr(f(D+V)-f(D))=\int_{\phi_1} A+\frac{1}{2}\int_{\phi_2}A^2+\int_{\phi_3}\Big(\frac{1}{2}AdA+\frac{1}{3}A^3\Big)+\int_{\phi_4}\Big(\frac{1}{3}AdAA+\frac{1}{4}A^4\Big)+\ldots,
	\end{align*}
	where the dots indicate terms of degree 5 and higher. Using $\phi_{2k-1}=\psi_{2k-1}+\frac{1}{2}B_0\phi_{2k}$, this becomes
	\begin{align*}
		\tr(f(D+V)-f(D))=&\int_{\psi_1} A+\frac{1}{2}\int_{\phi_2}(A^2+dA)+\int_{\psi_3}\Big(\frac{1}{2}AdA+\frac{1}{3}A^3\Big)\\
		&+\frac{1}{4}\int_{\phi_4}\Big(dAdA+\frac{2}{3}(dAA^2+AdAA+A^2dA)+A^4\Big)+\ldots.
	\end{align*}
	Notice that, if $\phi_4$ would be tracial, we would be able to identify the terms $dAA^2$, $AdAA$ and $A^2dA$, and thus obtain the Yang--Mills form $F^2=(dA+A^2)^2$, under the fourth integral. In the general case, however, cyclic permutations under $\int_\phi$ produce correction terms, of which we will need to keep track.
\end{ex}

\subsection{Near-tracial behavior of $\int_\phi$}\label{sct:Cyclicity}

In \textsection\ref{sct:from V to A}, we have not used the cyclicity property \ref{cyclicity} from Lemma \ref{cycl bracket}. Doing so yields the following proposition, which shows how $\int_\phi$ differs from being tracial. This proposition and its corollary are crucial for Section \ref{sct:main thm}.

For a universal $n$-form $X\in\Omega^n(\A)$, define $\odd(X):=1$ if $n$ is odd, and $\odd(X):=0$ if $n$ is even.
\begin{prop}\label{prop:cycl}
	Let $X$ and $Y$ be universal forms. Then
		$$\int_\phi XY-\int_\phi YX=\odd(Y)\int_\phi Y dX-\odd(X)\int_\phi X dY.$$
\end{prop}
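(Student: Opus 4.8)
The statement to prove is a commutation rule for the integral $\int_\phi$ against the Hochschild cochains $\phi_n$, expressing the failure of $\int_\phi$ to be tracial in terms of the differential $d$. Let me sketch how I'd approach it.

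=== PROOF PLAN ===

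\textbf{Proof plan.}
The plan is to reduce to the case of monomials by bilinearity, and then use the two structural rules for the bracket from Lemma \ref{cycl bracket}. Since $\int_\phi$ is defined by linear extension of $\phi$, and $\phi_n(a_0,\ldots,a_n)=\br{a_0[D,a_1],\ldots,[D,a_n]}$, it suffices to verify the identity when $X$ and $Y$ are monomials of the form $a_0da_1\cdots da_p$ and $b_0db_1\cdots db_q$. The crux is to relate the product $XY$ under $\int_\phi$ to the cyclic permutation $YX$, and the natural tool is property \ref{cyclicity}, which states $\br{V_1,\ldots,V_n}=\br{V_n,V_1,\ldots,V_{n-1}}$, i.e. invariance under a single cyclic shift.

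\textbf{Key steps.}
First I would write out $\int_\phi XY$ explicitly. For $X=a_0da_1\cdots da_p$ and $Y=b_0db_1\cdots db_q$, the product $XY=a_0da_1\cdots da_p\, b_0 db_1\cdots db_q$ must first be brought into the standard form $c_0dc_1\cdots dc_{p+q}$ before $\int_\phi$ applies; this is where the term $a_p b_0$ requires rewriting $(da_p)b_0 = d(a_pb_0)-a_p\,db_0$, which is the source of the $d$ on the right-hand side of the claimed identity. Next, I would apply cyclicity \ref{cyclicity} to move the $Y$-block in front of the $X$-block. Because the bracket only enjoys invariance under cyclic shift of the underlying \emph{operators} $[D,a_i]$ (not of the universal letters directly), each shift past a one-form $[D,a_i]$ picks up the parity-dependent signs, and moving an odd-degree form all the way around produces exactly one correction term of the form $\int_\phi Y\,dX$ (and symmetrically $\int_\phi X\,dY$). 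The factors $\odd(X)$ and $\odd(Y)$ track whether these boundary corrections survive: a form of even degree cycles back with trivial sign contribution and no leftover $d$-term, whereas an odd form leaves a residual differential.

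\textbf{Main obstacle.}
The hard part will be bookkeeping the signs and the boundary terms correctly when translating cyclicity of the operator-bracket $\br{\cdot}$ into cyclicity of the universal-form integral $\int_\phi$. The subtlety is that $\int_\phi XY$ is \emph{not} simply $\br{\ldots}$ with the letters of $X$ then $Y$ in order: the junction between $X$ and $Y$ introduces a Leibniz correction (via $[D,a_pb_0]=a_p[D,b_0]+[D,a_p]b_0$, the operator-level shadow of $d(a_pb_0)=a_p\,db_0+da_p\,b_0$), and the same happens at the junction when we reform $YX$. I expect that carefully comparing these two junction corrections, one for $XY$ and one for $YX$, is precisely what yields the asymmetric right-hand side $\odd(Y)\int_\phi Y\,dX-\odd(X)\int_\phi X\,dY$. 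A clean way to organize this is to first establish the identity for the elementary cases where $X$ or $Y$ is a single letter $a\in\A$ or a single one-form $da$, and then bootstrap to general monomials by an induction on total degree, at each stage invoking only \ref{cyclicity} and \ref{commutation} together with the graded Leibniz rule for $d$.
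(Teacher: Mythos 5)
Your plan is essentially the paper's proof: reduce to monomials, expand the junction $da_p\,b_0=d(a_pb_0)-a_p\,db_0$ repeatedly by the Leibniz rule, recombine the resulting brackets via the commutation property (II), and then invoke cyclicity (I) to identify the two ``joined'' brackets coming from $XY$ and from $YX$, so that only the junction corrections $-\odd(X)\int_\phi X\,dY$ and $-\odd(Y)\int_\phi Y\,dX$ survive. One small correction of emphasis: the parity factors do not come from signs picked up when cyclically shifting past one-forms (property (I) is exact and sign-free); they come from the alternating sum of Leibniz corrections at the junction, which telescopes to $0$ when $X$ has even degree and to a single residual term $\int_\phi X\,dY$ when $X$ has odd degree --- which is precisely the mechanism your ``main obstacle'' paragraph identifies.
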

\begin{proof}
Without loss of generality, assume that $X=x_0dx_1\dots dx_n$ and $Y=y_0dy_1\dots dy_k$ for some $x_0,\ldots,x_n,y_0,\ldots,y_k\in\A$.
By using $da b=d(ab)-adb$ repeatedly, we get
\begin{align*}
	\int_\phi XY =& \int_\phi x_0 dx_1\cdots dx_{n-1}(d(x_ny_0)-x_ndy_0)dy_1\cdots dy_k\\
	=&\int_\phi x_0\big( dx_1\cdots dx_{n-1}d(x_ny_0)-dx_1\cdots dx_{n-2}d(x_{n-1}x_n)dy_0+\dots\\
	&\ldots+(-1)^{n-1}d(x_1x_2)dx_3\cdots dx_ndy_0+(-1)^nx_1dx_2\cdots dx_ndy_0\big)dy_1\cdots dy_k\\
	=&\big\langle x_0[D,x_1],\ldots,[D,x_{n-1}],[D,x_ny_0],[D,y_1],\ldots,[D,y_k]\big\rangle\\
	&-\big\langle x_0[D,x_1],\ldots,[D,x_{n-2}],[D,x_{n-1}x_n],[D,y_0],[D,y_1],\ldots,[D,y_k]\big\rangle+\ldots\\
	&\ldots+(-1)^{n-1}\big\langle x_0[D,x_1x_2],[D,x_3],\ldots,[D,x_n],[D,y_0],[D,y_1],\ldots,[D,y_k]\big\rangle\\
	&+(-1)^n\big\langle x_0x_1[D,x_2],[D,x_3],\ldots,[D,x_n],[D,y_0],[D,y_1],\ldots,[D,y_k]\big\rangle\\
	=&\big\langle x_0[D,x_1],\ldots,[D,x_{n}]y_0,[D,y_1],\ldots,[D,y_k]\big\rangle\\
	&+\sum_{j=0}^{n-2}(-1)^j\big\langle x_0[D,x_1],\ldots,[D,x_n],[D,y_0],\ldots,[D,y_k]\big\rangle\\
	=&\big\langle x_0[D,x_1],\ldots,[D,x_{n}],y_0[D,y_1],\ldots,[D,y_k]\big\rangle-\odd(X)\int_\phi X dY.
\end{align*}
	Doing the same for $\int_\phi YX$ and using cyclicity (Lemma \ref{cycl bracket}\ref{cyclicity}) yields the proposition.
\end{proof}

A quick check shows that the above proposition implies the following handy rules.

\begin{cor}\label{cor:cycl}Let $X,Y\in\Omega^\bullet(\A)$, and $A\in\Omega^1(\A)$.
\begin{enumerate}[label=(\roman*)]
	\item\label{cycl2} If $X$ and $Y$ are both of even degree, then
		$$\int_\phi XY=\int_\phi YX.$$
	\item\label{cycl3} If $X$ has odd degree, then
		$$\int_\phi (AX-XA)=\int_\phi d(AX).$$
	\item\label{cycl4} If $X$ has even degree, then
		$$\int_\phi(XA-AX)=\int_\phi dXA+\int_\phi dAdX.$$
\end{enumerate}	

\end{cor}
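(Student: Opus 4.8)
The plan is to derive all three identities directly from Proposition \ref{prop:cycl}, the only auxiliary input being the observation that closed forms of even degree are central under $\int_\phi$. Indeed, if $Y$ has even degree and $dY=0$, then both terms on the right-hand side of Proposition \ref{prop:cycl} vanish ($\odd(Y)=0$ kills the first and $dY=0$ the second), so $\int_\phi XY=\int_\phi YX$ for every $X$. Since $d^2=0$, the $2$-form $dA$ is closed of even degree and hence commutes with any universal form under $\int_\phi$; this is the workhorse for the commutation steps below. Part \ref{cycl2} is then immediate: if $X$ and $Y$ are both of even degree, then $\odd(X)=\odd(Y)=0$, the right-hand side of Proposition \ref{prop:cycl} vanishes identically, and $\int_\phi XY=\int_\phi YX$.

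For \ref{cycl3} I apply Proposition \ref{prop:cycl} with first argument $A$ and second argument $X$; since $A$ and $X$ are both odd this gives
$$\int_\phi AX-\int_\phi XA=\int_\phi X\,dA-\int_\phi A\,dX.$$
Using the centrality of $dA$ I replace $\int_\phi X\,dA$ by $\int_\phi dA\,X$, and the graded Leibniz rule $d(AX)=dA\,X-A\,dX$ then identifies the right-hand side with $\int_\phi d(AX)$, as claimed.

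For \ref{cycl4}, with $X$ of even degree I apply Proposition \ref{prop:cycl} to the pair $(X,A)$; here $\odd(X)=0$ and $\odd(A)=1$, so $\int_\phi XA-\int_\phi AX=\int_\phi A\,dX$. A second application of the proposition, now to the pair $(A,dX)$ of odd forms, yields $\int_\phi A\,dX-\int_\phi dX\,A=\int_\phi dX\,dA$, the remaining correction term dropping out because $d(dX)=0$. Finally I invoke centrality of $dA$ once more to rewrite $\int_\phi dX\,dA=\int_\phi dA\,dX$, which assembles into $\int_\phi XA-\int_\phi AX=\int_\phi dX\,A+\int_\phi dA\,dX$.

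The manipulations are purely formal once Proposition \ref{prop:cycl} is in hand, so I do not anticipate any serious obstacle; the only point demanding care is the sign and degree bookkeeping when invoking the graded Leibniz rule and when deciding which slot of the proposition each form should occupy. Isolating the centrality of closed even-degree forms at the outset streamlines these commutation steps and removes the ambiguity.
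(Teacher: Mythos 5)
Your proposal is correct and follows essentially the same route as the paper, which simply asserts that Proposition \ref{prop:cycl} implies these rules by a quick check: you derive each identity by direct applications of that proposition together with the graded Leibniz rule and $d^2=0$. Isolating the centrality of closed even-degree forms (such as $dA$) under $\int_\phi$ is a clean way to organize exactly the manipulations the paper leaves implicit.
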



\subsection{Higher-order generalized Chern--Simons forms}
As a final preparation for our main result, we briefly recall from \cite{Qui90} the definition of Chern--Simons forms. 

         \begin{defn}
           \label{defn:cs}
           Let $(\Omega^\bullet, d)$ be a differential graded algebra. The {\em Chern--Simons form} of degree $2k-1$ is given for $A \in \Omega^1$ by 
           \begin{equation}\label{eq:cs}
\cs_{2k-1}(A) := \int_0^1 A (F_t)^{k-1} dt,
           \end{equation}
           where $F_t = t dA + t^ 2 A^2$ is the curvature two-form of the (connection) one-form $A_t = t A$.
           \end{defn}
         We will only work with the universal differential graded algebra $\Omega^\bullet=\Omega^\bullet(\A)$ for the algebra $\A$.

         \begin{ex}
           For the first three Chern--Simons forms one easily derives the following explicit expressions:
           \begin{gather*}
             \cs_1(A) = A; \qquad  \cs_3(A) = \frac 12 \left( A dA + \frac 2 3 A^3 \right);\\
             \cs_5(A) = \frac 13 \left( A (dA)^2 + \frac 3 4 A dA A^ 2 + \frac 3 4 A^3 dA + \frac 3 5 A^5 \right).
             \end{gather*}
These are well-known expressions from the physics literature, {\em cf.} \cite[Section 11.5.2]{Nak90}.
           \end{ex}

\section{Expansion of the spectral action in terms of $(b,B)$-cocycles}\label{sct:main thm}
In this section we prove our main theorem.
\begin{thm}\label{thm:main thm}
  Let $(\A,\H,D)$ be an $s$-summable spectral triple, and let $f\in\E^{s,\gamma}$ for $\gamma\in(0,1)$. The spectral action fluctuated by $V=\pi_D(A)\in\Omega_D^1(\A)_\sa$ 
  can be written as
 \begin{align*}
  \tr(f(D+V)-f(D)) = \sum_{k=1}^\infty \left( \int_{\psi_{2k-1}}  \cs_{2k-1} (A) +\frac 1 {2k} \int_{\phi_{2k}}  F^{k} \right),
  \end{align*}
  where the series converges absolutely.
\end{thm}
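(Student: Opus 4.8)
The plan is to sum the Taylor series \eqref{eq:Taylor} by first controlling its tail and then reorganizing it through a single scaling integral. For convergence I would combine Proposition \ref{prop:Taylor} with Lemma \ref{lem:Cs and Es bounds}: the remainder after $K$ terms is $T^{D+V,D,\ldots,D}_{f^{[K+1]}}(V,\ldots,V)$, whose trace is bounded by $C^{K+2}(K+1)!^{\gamma-1}\norm{V}^{K+1}(1+\norm{V})^{2s}\big\|(D-i)^{-1}\big\|_s^s$. Since $\gamma<1$ makes $(K+1)!^{\gamma-1}$ decay superexponentially, this tends to $0$, so $\tr(f(D+V)-f(D))=\sum_{n\geq1}\tfrac1n\br{V,\ldots,V}$ with absolute convergence; the same superexponential decay licenses every rearrangement below.

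Next I would introduce the scaling parameter. Setting $\Phi(t):=\tr(f(D+\pi_D(tA))-f(D))$ and using \eqref{eq:<V,...,V>/n} together with Corollary \ref{cor:2x2 matrix} applied to $tA$, one obtains $\Phi(1)=\int_\phi\int_0^1\Omega(t)\,dt$, where $\Omega(t):=\begin{pmatrix}A&0\end{pmatrix}(1-tM)^{-1}\begin{pmatrix}1\\0\end{pmatrix}$ with $M:=\begin{pmatrix}A+dA&-A\\ dA&-A\end{pmatrix}$, and $(1-tM)^{-1}=\sum_{n\geq0}(tM)^n$ is read off degreewise under $\int_\phi$. The crucial step is a closed form for this resolvent: a Schur-complement computation for the top-left entry, using $(1+tA)(1-tA)=1-t^2A^2$ and $F_t=t\,dA+t^2A^2$, collapses $1-tA-t\,dA+t^2A(1+tA)^{-1}dA$ to $(1+tA)^{-1}(1-F_t)$, whence $\Omega(t)=A(1-F_t)^{-1}(1+tA)$. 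Expanding $(1-F_t)^{-1}=\sum_{k\geq1}F_t^{k-1}$ and separating parities gives $\Omega(t)^{\odd}=\sum_k AF_t^{k-1}$ and $\Omega(t)^{\ev}=t\sum_k AF_t^{k-1}A$.

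Integrating over $t$ identifies the odd part at once: by Definition \ref{defn:cs}, $\cs_{2k-1}(A)=\int_0^1 AF_t^{k-1}\,dt$, so the odd contribution to $\Phi(1)$ is exactly $\sum_k\int_{\phi_{2k-1}}\cs_{2k-1}(A)$. I would then substitute $\phi_{2k-1}=\psi_{2k-1}+\tfrac12 B_0\phi_{2k}$ and use $\int_{B_0\phi_{2k}}X=\int_{\phi_{2k}}dX$ (immediate from the definition of $B_0$, for any $(2k-1)$-form $X$) to split off $\sum_k\int_{\psi_{2k-1}}\cs_{2k-1}(A)$ and push the remaining halves into even degree. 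It then remains to show the even contributions assemble into Yang--Mills terms, i.e.
\[
\sum_k\int_{\phi_{2k}}\Big(\int_0^1 t\,AF_t^{k-1}A\,dt+\tfrac12\,d\,\cs_{2k-1}(A)\Big)=\sum_k\frac1{2k}\int_{\phi_{2k}}F^k.
\]
For this I would run a noncommutative transgression argument. As $\int_{\phi_{2k}}$ is tracial on even forms (Corollary \ref{cor:cycl}\ref{cycl2}), the Chern--Weil identity $F^k=\int_0^1\frac{d}{dt}F_t^k\,dt$ yields $\frac1{2k}\int_{\phi_{2k}}F^k=\frac12\int_0^1\int_{\phi_{2k}}(dA+2tA^2)F_t^{k-1}\,dt$; feeding in the Bianchi identity $dF_t=t[F_t,A]$ (hence $dF_t^{k-1}=t[F_t^{k-1},A]$) and cancelling, the required equality reduces to the single vanishing statement $\int_\phi A\,d\big((1-F_t)^{-1}\big)=0$.

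The main obstacle is precisely this last vanishing. The forms $A\,d((1-F_t)^{-1})$ are exact-type even forms, and the near-tracial rules of Corollary \ref{cor:cycl} only relate $\int_{\phi_m}$ to $\int_{\phi_{m+1}}$, so a naive cyclic manipulation spawns an infinite cascade of higher-degree correction terms instead of an outright cancellation. The delicate task is to close this cascade. I expect to route it through $A\,d\beta=dA\,\beta-d(A\beta)$ together with the relation $\int_\phi dG\,A=-\int_\phi A\,dG$ (which follows from $dG=t[G,A]$ and even commutativity), turning the corrections into pairings of exact forms against the even cochains, where the cocycle property $B\phi_{2k}=0$ (equivalently $b\phi_{2k}=0$) forces them to die. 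Keeping track of this cascade and terminating it via $B\phi_{2k}=0$ is where the real work lies; the remaining manipulations are the degreewise bookkeeping already illustrated in Example \ref{ex:first terms}.
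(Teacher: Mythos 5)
Your architecture is essentially the paper's: Taylor expansion plus Lemma \ref{lem:Cs and Es bounds} for convergence, a generating-function identity in $M_2(\Omega^\bullet(\A))$, the splitting $\phi_{2k-1}=\psi_{2k-1}+\tfrac12 B_0\phi_{2k}$, and a Chern--Weil transgression (the paper's Lemma \ref{lem:twee}). Your Schur-complement computation is correct and is a clean equivalent of the $\alpha,\beta$ combinatorics behind Proposition \ref{prop:k}: indeed $\bigl(1-tA-t\,dA+t^2A(1+tA)^{-1}dA\bigr)^{-1}=(1-F_t)^{-1}(1+tA)$, reproducing the integrand $\sum_k\bigl(AF_t^{k-1}+tAF_t^{k-1}A\bigr)$. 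Your reduction of the even part to the single vanishing statement $\int_\phi A\,d\bigl((1-F_t)^{-1}\bigr)=0$ is also correct: by the Bianchi identity this is, degree by degree, exactly the term the paper converts into $\int_{\phi_{2k+1}}d(A_t^2F_t^{k-1})$ in \eqref{eq:M}.

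But that vanishing is precisely where your proof stops, and the mechanism you propose to establish it would fail. The identity $B\phi_{2k}=0$ is automatic: it comes from cyclicity of the bracket together with the alternating-sign cancellation over an even number of cyclic terms, and it carries no information about $B_0\phi_{2k}$, i.e.\ about integrals of exact forms, since $\int_{\phi_{2k}}dX=B_0\phi_{2k}(\,\cdots)$. Such integrals are emphatically nonzero in general: $B_0\phi_{2k}\neq0$ is the entire reason $\psi_{2k-1}$ differs from $\phi_{2k-1}$, and, e.g., the term $\tfrac12\int_{\phi_2}dA$ survives in the final degree-2 contribution in Example \ref{ex:first terms}. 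Likewise $b\phi_{2k}=0$ does not kill exact forms. So no general cocycle property can make your "cascade" die; the vanishing $\int_{\phi_{2m+3}}d(A_t^2F_t^m)=0$ is a special feature of these particular forms, and the paper proves it in Lemmas \ref{lem:een} and \ref{lem:een b} by a genuine descent induction: a degree-lowering identity derived from the Bianchi identity and the near-traciality rules of Corollary \ref{cor:cycl}, applied twice, with $F_t=A_t^2+dA_t$ recombined midway so that degree $2m+3$ reduces to degree $2m+1$, terminating at $m=0$. That induction is the real content of Theorem \ref{thm:asymptotic expansion} and is absent from your proposal. A secondary, lesser gloss: "the same superexponential decay licenses every rearrangement" still requires per-term bounds on $\int_\phi$ of the correction forms, which must first be expanded by the Leibniz rule into canonical forms $a_0da_1\cdots da_m$ (exponentially many terms) before Lemma \ref{lem:Cs and Es bounds} applies; this bookkeeping is what the proof of Theorem \ref{thm:main bounds} supplies.
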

We prove this theorem in two steps. Firstly, we deal with the algebraic part of this statement, in \textsection\ref{sct:truncated expansion}. Here we only need to assume $f\in\Wsn$ for a finite $n\in\N$. Secondly, in \textsection\ref{sct:convergence}, we tackle the analytical part. We there obtain a strong estimate on the remainder of the above expansion in Theorem \ref{thm:main bounds} for a function $f\in\E^{s,\gamma}$ for general $\gamma\in(0,1]$. This estimate will imply that the conclusion of Theorem \ref{thm:main thm} still holds in the case of $\gamma=1$, when the perturbation $V$ is sufficiently small. When $f\in\E^{s,\gamma}$ for $\gamma\in(0,1)$, the expansion follows for all perturbations, and thus we prove Theorem \ref{thm:main thm}.

\subsection{Truncated expansion}\label{sct:truncated expansion}
Let $K\in\N$, $f\in\W^{s,2K}$, and $V=\pi_D(A)\in\Omega^1_D(\A)_\sa$.
We prove a truncated version of Theorem \ref{thm:main thm}, showing that the fluctuation of the spectral action can be expressed in terms of Chern--Simons and Yang--Mills forms, up to a remainder which involves forms of degree higher than $K$.
To enumerate the remainder forms we use the index set
\begin{align}\label{eq:T_K}
  T_K:=\left\{(v,w,p)\in\coprod_{m\in\N_0} (\N_0\times\N^{m-1})\times\N^m\times\N_0~\middle|~|v|+|w|+\left\lfloor\frac{p}{2}\right\rfloor< K,~ 2|v|+|w|+p\geq K\right\}.
\end{align}
\begin{prop}\label{prop:k}We have the asymptotic expansion
\begin{align*}
	\tr(f(D+V)-f(D))\sim&\sum_{k=1}^\infty\int_\phi\bigg(\cs_{2k-1}(A)+\int_0^1AF_t^{k-1}tA\,dt\bigg),
\end{align*}
by which we mean that we can write the $K^\text{th}$ remainder of this expansion as
\begin{align*}
	&\tr(f(D+V)-f(D))-\sum_{k=1}^K\int_\phi\bigg(\cs_{2k-1}(A)+\int_0^1AF_t^{k-1}tA\,dt\bigg)\\
	&\quad= \tr\left(T_{f^{[K+1]}}^{D+V,D,\ldots,D}(V,\ldots,V)\right)-\sum_{(v,w,p)\in T_K}\frac{1}{2|v|+|w|+p+1}\int_\phi AA^{2v_1}(dA)^{w_1}\cdots A^{2v_m}(dA)^{w_m}A^{p},
\end{align*}
where $T_K$, defined by \eqref{eq:T_K}, satisfies $|T_K|\leq 2^{K+1}$, and where $f\in\W^{s,2K}$.
\end{prop}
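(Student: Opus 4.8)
The plan is to combine the two previously established formulas—the algebraic identity of Corollary \ref{cor:2x2 matrix} and the Taylor remainder of Proposition \ref{prop:Taylor}—with the near-tracial rules of Corollary \ref{cor:cycl} to reorganize the degree-$n$ term $\frac{1}{n}\br{V,\ldots,V}$ into Chern--Simons and Yang--Mills pieces plus explicitly enumerated correction forms. First I would start from \eqref{eq:<V,...,V>/n} and Proposition \ref{prop:Taylor}, writing
\begin{align*}
\tr(f(D+V)-f(D))-\sum_{n=1}^{K}\frac{1}{n}\br{V,\ldots,V}=\tr\left(T^{D+V,D,\ldots,D}_{f^{[K+1]}}(V,\ldots,V)\right),
\end{align*}
so that the true remainder is exactly the multiple operator integral term; all the work is then to show that $\sum_{n=1}^{K}\frac{1}{n}\br{V,\ldots,V}$ equals $\sum_{k=1}^{K}\int_\phi(\cs_{2k-1}(A)+\int_0^1 AF_t^{k-1}tA\,dt)$ modulo the finite sum of forms indexed by $T_K$. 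Since $\br{V,\ldots,V}$ is a degree-$n$ expression in $A$ and $dA$ (by Corollary \ref{cor:2x2 matrix}), while the Chern--Simons form $\cs_{2k-1}(A)$ and the Yang--Mills-type terms have a fixed relation between their degree in $A$ and their differential degree, the mismatch is precisely the set of monomials whose "form-order" index $(v,w,p)$ lands in $T_K$.

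The computational heart is to expand the matrix power in Corollary \ref{cor:2x2 matrix}. I would introduce the integration variable $t$ via the curvature $F_t=t\,dA+t^2A^2$ so that the $2\times2$ matrix identity can be recognized as generating the Chern--Simons integrand $AF_t^{k-1}$ together with a partner term $AF_t^{k-1}tA$; the factor $\frac{1}{2|v|+|w|+p+1}$ reflects integrating $t^{2|v|+|w|+p}$ over $[0,1]$, which is exactly the weight that appears when one collects monomials of total degree $n=2|v|+|w|+p$ in the expansion of $\int_0^1(\cdots)\,dt$. The key algebraic step is to apply Corollary \ref{cor:cycl}\ref{cycl2}--\ref{cycl4} to move factors of $A$ cyclically under $\int_\phi$: each such move either closes up a monomial into the desired $F^k$ or $\cs_{2k-1}$ shape (when the total degree is below the truncation, $|v|+|w|+\lfloor p/2\rfloor<K$) or produces a correction form that gets pushed into higher form-degree. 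The index set $T_K$ is designed exactly to catch those monomials that have been "promoted" past degree $K$ in form-order but still arise from the terms $n\le K$ of the Taylor expansion: the condition $2|v|+|w|+p\ge K$ says the monomial genuinely appears in the truncated sum, while $|v|+|w|+\lfloor p/2\rfloor<K$ says it has not yet been fully absorbed.

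I expect the main obstacle to be the bookkeeping of the cyclic correction terms: each application of Corollary \ref{cor:cycl} generates a $d$-exact or reordered term, and one must verify that after all rearrangements the leftover monomials are exactly those indexed by $(v,w,p)\in T_K$ with the stated coefficients, with no double counting and no omissions. The cleanest route is probably an induction on $K$ (or equivalently on $n$), tracking at each step how a monomial $AA^{2v_1}(dA)^{w_1}\cdots A^{2v_m}(dA)^{w_m}A^p$ either combines with its cyclic neighbors to form part of $F^k$ or is deferred to the remainder; the coefficient $\frac{1}{2|v|+|w|+p+1}$ must be shown invariant under these moves. Finally the cardinality bound $|T_K|\le 2^{K+1}$ is a routine counting estimate: the constraints force $m$ and the total degree to be bounded in terms of $K$, and the number of compositions is controlled by a power of two, so I would dispatch it by a direct combinatorial count of the admissible tuples rather than a sharp enumeration.
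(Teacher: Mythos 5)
Your starting point is correct, and so is part of the skeleton: by \eqref{eq:<V,...,V>/n} and Proposition \ref{prop:Taylor} the analytic remainder is exactly $\tr\big(T^{D+V,D,\ldots,D}_{f^{[K+1]}}(V,\ldots,V)\big)$, and the coefficient $\frac{1}{2|v|+|w|+p+1}$ does come from $\int_0^1 t^{2|v|+|w|+p}\,dt$ after writing $\frac1n=\int_0^1 t^{n-1}\,dt$ and expanding the matrix power of Corollary \ref{cor:2x2 matrix}. But your ``key algebraic step'' --- invoking Corollary \ref{cor:cycl} to move factors of $A$ cyclically under $\int_\phi$ --- is wrong for this proposition, and it is where your argument would fail. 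The identity claimed in Proposition \ref{prop:k} is an exact, term-by-term equality of the \emph{raw} monomials $\int_\phi AA^{2v_1}(dA)^{w_1}\cdots A^{2v_m}(dA)^{w_m}A^{p}$: no monomial is ever rearranged, and every application of Corollary \ref{cor:cycl} would generate $d$-exact correction terms that simply do not appear in the statement. The near-tracial rules enter only in the \emph{next} step (Theorem \ref{thm:asymptotic expansion}), where $\int_\phi\big(\cs_{2k-1}(A)+\int_0^1AF_t^{k-1}tA\,dt\big)$ is converted into $\int_{\psi_{2k-1}}\cs_{2k-1}(A)+\frac{1}{2k}\int_{\phi_{2k}}F^k$; you have conflated the two steps, and the ``main obstacle'' you anticipate (coefficient bookkeeping under cyclic moves) is a phantom problem created by this conflation.

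The idea you are missing is the scalar matrix algebra. Writing the matrix of Corollary \ref{cor:2x2 matrix} as $\alpha A+\beta\, dA$ with $\alpha=\left(\begin{smallmatrix}1&-1\\0&-1\end{smallmatrix}\right)$ and $\beta=\left(\begin{smallmatrix}1&0\\1&0\end{smallmatrix}\right)$, the identities $\alpha^2=1$, $\beta^2=\beta$, $\beta\alpha\beta=0$, combined with the contractions against $e_1$, annihilate every word in which an odd power of $A$ precedes a $dA$, and give coefficient exactly $1$ to each surviving word. Hence the $n$-th Taylor term is \emph{already} the sum of all monomials $t^{n-1}AA^{2v_1}(dA)^{w_1}\cdots A^{2v_m}(dA)^{w_m}A^{p}$ with $2|v|+|w|+p=n-1$; no rearrangement is needed or performed. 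The proposition then reduces to pure re-indexing: truncating the Taylor series at $K$ sums these words over $P_K=\{2|v|+|w|+p<K\}$, while truncating $\sum_k\int_\phi\int_0^1 AF_t^{k-1}(1+tA)\,dt$ at $K$ sums the \emph{same} words over $S_K=\{|v|+|w|+\lfloor p/2\rfloor<K\}$ (expand $F_t^{k-1}(1+tA)$ in words of $t^2A^2$ and $t\,dA$). Since $P_K\subseteq S_K$, the discrepancy is the sum over $T_K=S_K\setminus P_K$, which is precisely definition \eqref{eq:T_K}, and $|T_K|\le|S_K|\le 2^{K+1}$ by counting words. Note finally that your reading of the two conditions in $T_K$ is backwards: $2|v|+|w|+p\ge K$ says the monomial does \emph{not} occur in the truncated Taylor sum, while $|v|+|w|+\lfloor p/2\rfloor<K$ says it \emph{does} occur in the truncated Chern--Simons/Yang--Mills sum, which is exactly why it must be subtracted in the remainder.
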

\begin{proof}
We start with the 2x2 matrix equation from Corollary \ref{cor:2x2 matrix} and separate the 1-forms $A$ from the two-forms $dA$. The $n$-th term in the Taylor expansion of $\tr (f(D+V))$ is given (by use of \eqref{eq:<V,...,V>/n} and Corollary \ref{cor:2x2 matrix}) by
\begin{align}
	\frac{1}{n!}\frac{d^{n}}{dt^{n}}\tr(f(D+tV))\Big|_{t=0}
	=&
	\frac{1}{n}\int_\phi\begin{pmatrix}
	A & 0
	\end{pmatrix}
	\left(\begin{pmatrix}
	A & -A\\
	0   & -A
	\end{pmatrix}+
	\begin{pmatrix}
	dA & 0\\
	dA   & 0
	\end{pmatrix}
	\right)^{n-1}\begin{pmatrix}
	1\\
	0
	\end{pmatrix}\nonumber\\
	\equiv&
	\frac{1}{n}\int_\phi\begin{pmatrix}
	A & 0
	\end{pmatrix}
	\left(\alpha A+
	\beta dA\right)^{n-1}\begin{pmatrix}
	1\\
	0
	\end{pmatrix}\nonumber\\
	=& \frac{1}{n}\int_\phi A\,e_1^t (\alpha A+\beta dA)^{n-1}e_1.
	\label{2x2 formula Tr}
\end{align}
	for some scalar-valued 2x2 matrices $\alpha$ and $\beta$, and $e_1=\vect{1}{0}$. The $\alpha$'s and $\beta$'s have very nice algebraic properties, which can be used to regroup the terms in the expansion in $n$.
When summing \eqref{2x2 formula Tr} from $n=1$ to infinity, and grouping the universal forms by their degree as in Example \ref{ex:first terms}, we need some machinery to keep track of the coefficient $1/n$. We will work in the space of (finite) polynomials $M_2(\Omega^\bullet(\A))[t]$, and define an integration with respect to $t$ as the linear map $\int_0^1dt\,:M_2(\Omega^\bullet(\A))[t]\to M_2(\Omega^\bullet(\A))$ given by integration of polynomials. We thus obtain
\begin{align}
	\frac{1}{n!}\frac{d^{n}}{dt^{n}}\tr(f(D+tV))\Big|_{t=0}=& \int_0^1dt\,t^{n-1}\int_\phi A \,e_1^t(\alpha A+\beta dA)^{n-1}e_1\nonumber\\
	=&\int_0^1dt\int_\phi A\,e_1^t(\alpha tA+\beta t dA)^{n-1}e_1.\label{eq:M_2[t]}
\end{align}

We now expand the $(n-1)$-th power, which is complicated because $\alpha$ and $\beta$ do not commute. To avoid notational clutter, let us denote $X:=tA$ and $Y:=tdA$. We find
\begin{align}
	e_1^t (\alpha X+\beta Y)^{n-1}e_1=\sum_{k=0}^{\lceil\frac{n-1}{2}\rceil}\sum_{\substack{v_1\geq0,~v_2,\ldots,v_k\geq1\\ w_1,\ldots,w_k\geq 1,~p\geq0\\ |v|+|w|+p=n-1}}e_1^t (\alpha^{v_1}\beta^{w_1}\cdots\alpha^{v_k}\beta^{w_k}\alpha^{p})e_1\, X^{v_1}Y^{w_1}\cdots X^{v_k}Y^{w_k}X^{p}.\label{ncbinom}
\end{align}

We can summarize the identities involving $\alpha$ and $\beta$ that we will use as
\begin{align*}
	\alpha^2&=1;	&
	\beta^2&=\beta;	&
	\beta\alpha\beta&=0;& e_1^t(\alpha) e_1&=1;\\
	e_1^t (\alpha\beta\alpha) e_1&=0;	&	
	e_1^t (\alpha\beta) e_1 &=0;	&
	e_1^t (\beta\alpha) e_1 &= 1;	&
	e_1^t(\beta) e_1&=1.
\end{align*}
From these identities follow the following two remarks:
\begin{itemize}
\item If $k\geq2$ and $v_i$ is odd for a certain $i\in\{2,\ldots,k\}$, then somewhere in $\alpha^{v_1}\beta^{w_1}\cdots\alpha^{v_k}\beta^{w_k}\alpha^{p}$ a factor $\beta\alpha\beta=0$ occurs, so in particular $$e_1^t( \alpha^{v_1}\beta^{w_1}\cdots\alpha^{v_k}\beta^{w_k}\alpha^{p})e_1=0.$$
\item If $v_1$ is odd and $v_2,\ldots,v_k$ are all even, then
	$$e_1^t(\alpha^{v_1}\beta^{w_1}\cdots\alpha^{v_k}\beta^{w_k}\alpha^{p})e_1=e_1^t( \alpha\beta\alpha^{p})e_1=0.$$
\end{itemize}
Therefore, for all $k\geq 0$, we conclude that in \eqref{ncbinom} only terms remain in which $v_1,\ldots,v_k$ are even. In fact, we find
\begin{align*}
	e_1^t (\alpha X+\beta Y)^{n-1}e_1&=\sum_{k=0}^{\lceil\frac{n-1}{2}\rceil}\sum_{\substack{v_1\in2\N_0,~v_2,\ldots,v_k\in2\N 
	\\w_1,\ldots,w_k\geq 1,~p\geq0,\\|v|+|w|+p=n-1}}e_1^t( \alpha^{v_1}\beta^{w_1}\cdots\alpha^{v_k}\beta^{w_k}\alpha^{p})e_1\, X^{v_1}Y^{w_1}\cdots X^{v_k}Y^{w_k}X^{p}\\
	&=\sum_{k=0}^{\lceil{\frac{n-1}{2}}\rfloor}\sum_{\substack{v_1\in2\N_0,~v_2,\ldots,v_k\in2\N\\w_1,\ldots,w_k\geq 1,~p\geq0,\\|v|+|w|+p=n-1}}e_1^t( \beta\alpha^{p})e_1\, X^{v_1}Y^{w_1}\cdots X^{v_k}Y^{w_k}X^{p}\\
	&=\sum_{k=0}^{\lceil\frac{n-1}{2}\rceil}\sum_{\substack{v_1\geq0,~v_2\ldots,v_k\geq1,\\w_1,\ldots,w_k\geq 1,~ p\geq0,\\2|v|+|w|+p=n-1}} (X^2)^{v_1}Y^{w_1}\cdots (X^2)^{v_k}Y^{w_k}X^{p}.
\end{align*}
Summing this from $n=1$ to $K$, we can write
\begin{align}\label{eq:one expansion}
	\sum_{n=1}^K e_1^t (\alpha X+\beta Y)^{n-1}e_1 = \sum_{(v,w,p)\in P_K}(X^2)^{v_1}Y^{w_1}\cdots (X^2)^{v_m}Y^{w_m}X^{p}
	,
\end{align}
where $P_K$ is the set of $(v,w,p)\in\coprod_{m}(\N_0\times\N^{m-1})\times\N^m\times\N_0$ such that $2|v|+w+p< K$.
%
In this last expression we can almost recognize an expansion of $(X^2+Y)^{k-1}=F_t^{k-1}$. Indeed, we have
\begin{align}\label{eq:other expansion}
	\sum_{k=1}^K(X^2+Y)^{k-1}(1+X)=\sum_{(v,w,p)\in S_K} (X^2)^{v_1}Y^{w_1}\cdots(X^2)^{v_m}Y^{w_m}X^{p},
\end{align}
where $S_K$ is the set of $(v,w,p)\in\coprod_{m}(\N_0\times\N^{m-1})\times\N^m\times\N_0$ such that $|v|+|w|+\lfloor \tfrac{p}{2}\rfloor< K.$ By \eqref{eq:other expansion} we have $|S_K|\leq 2^{K+1}$.
By using $T_K=S_K\setminus P_K$, we can combine \eqref{eq:M_2[t]}, \eqref{eq:one expansion} and \eqref{eq:other expansion}, and obtain
\begin{align}
	&\sum_{n=1}^{K}\frac{1}{n!}\frac{d^n}{dt^n}\tr\big(f(D+tV)\big)\Big|_{t=0}-\sum_{k=1}^K\int_\phi\int_0^1AF_t^{k-1}(1+tA)\,dt\nonumber\\
	&\quad= -\sum_{(v,w,p)\in T_K}\frac{1}{2|v|+|w|+p+1}\int_\phi AA^{2v_1}(dA)^{w_1}\cdots A^{2v_m}(dA)^{w_m}A^{p}.\label{eq:difference of expansions}
\end{align}
Together with \eqref{eq:remainder}, and the definition \eqref{eq:cs} of $\cs_{2k-1}(A)$, \eqref{eq:difference of expansions} implies the proposition.
\end{proof}

\begin{thm}\label{thm:asymptotic expansion}
We have
	$$\int_\phi\bigg(\cs_{2k-1}(A)+\int_0^1AF_t^{k-1}tA\,dt\bigg)=\int_{\psi_{2k-1}}\cs_{2k-1}(A)+\frac{1}{2k}\int_{\phi_{2k}} F^{k}.$$
	Therefore, with the same remainder term as in Proposition \ref{prop:k}, we have
	$$\tr(f(D+V)-f(D))\sim\sum_{k=1}^\infty\bigg(\int_{\psi_{2k-1}}\cs_{2k-1}(A)+\frac{1}{2k}\int_{\phi_{2k}} F^{k}\bigg).$$
\end{thm}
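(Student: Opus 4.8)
The plan is to prove the key identity
\begin{align*}
	\int_\phi\bigg(\cs_{2k-1}(A)+\int_0^1AF_t^{k-1}tA\,dt\bigg)=\int_{\psi_{2k-1}}\cs_{2k-1}(A)+\frac{1}{2k}\int_{\phi_{2k}} F^{k},
\end{align*}
since the asymptotic expansion then follows immediately by substituting this into the statement of Proposition \ref{prop:k}: the left-hand side is precisely the $k$-th term appearing there, and the right-hand side is the $k$-th term of the desired cocycle expansion, so the two asymptotic series coincide term by term with the same remainder. Thus all the work is in the displayed identity, which is purely algebraic and lives entirely inside the calculus of $\int_\phi$ developed in \textsection\ref{sct:Cyclicity}.

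First I would recall that $\cs_{2k-1}(A)=\int_0^1 AF_t^{k-1}\,dt$ and $F_t=tdA+t^2A^2$, so that the left-hand side equals $\int_\phi\int_0^1 AF_t^{k-1}(1+tA)\,dt$. The right-hand side splits the single bracket $\int_\phi$ into the two genuinely cohomological pieces $\int_{\psi_{2k-1}}$ and $\int_{\phi_{2k}}$. Since $\psi_{2k-1}=\phi_{2k-1}-\tfrac12 B_0\phi_{2k}$, I would rewrite $\int_{\psi_{2k-1}}\cs_{2k-1}(A)=\int_{\phi_{2k-1}}\cs_{2k-1}(A)-\tfrac12\int_{B_0\phi_{2k}}\cs_{2k-1}(A)$. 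The first of these is just $\int_\phi\cs_{2k-1}(A)$ restricted to degree $2k-1$, which matches the $\int_0^1 AF_t^{k-1}\,dt$ part of the left-hand side. So the real content is to show that the remaining discrepancy, namely
\begin{align*}
	\int_\phi\int_0^1 AF_t^{k-1}\,tA\,dt + \tfrac12\int_{B_0\phi_{2k}}\cs_{2k-1}(A),
\end{align*}
equals $\frac{1}{2k}\int_{\phi_{2k}}F^k$.

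The main obstacle, and the heart of the argument, is handling the term $\int_\phi\int_0^1 AF_t^{k-1}tA\,dt$: this is where $\int_\phi$ fails to be tracial, and I expect to apply Proposition \ref{prop:cycl} (or its corollary Corollary \ref{cor:cycl}) to move the trailing factor $tA$ around the cyclic bracket. The term $AF_t^{k-1}tA$ is a product of an odd-degree form $AF_t^{k-1}$ (degree $2k-1$) with the one-form $tA$, so the near-traciality correction from Corollary \ref{cor:cycl}\ref{cycl3} will produce exactly a $d(\cdots)$ term. The strategy is to integrate $F_t^k=(tdA+t^2A^2)^k$ against $\phi_{2k}$, observe that $\int_0^1 \frac{d}{dt}\big(t\,AF_t^{k-1}\big)\,dt$-type telescoping (or an explicit $t$-integration of the polynomial identity $\frac{d}{dt}F_t^k = k F_t^{k-1}(dA+2tA^2)$) relates $\int_0^1 F_t^k\,dt$ to $\cs$ and Yang--Mills pieces, and then to convert everything into the desired $\frac{1}{2k}F^k$ by evaluating at $t=1$. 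Concretely I would compute $\frac{d}{dt}\big(t\cs$-primitive$\big)$, integrate from $0$ to $1$, and use that $B_0\phi_{2k}$ precisely accounts for the boundary/cyclic correction terms via the rule in Corollary \ref{cor:cycl}. Matching the numerical coefficient $\frac{1}{2k}$ will come out of the $\int_0^1 t^{2k-1}\,dt=\frac{1}{2k}$ weighting inherent in the curvature $F=F_1$ being reached by scaling $A_t=tA$; keeping the combinatorial factors straight through the $B_0$ correction is the step most likely to require care.
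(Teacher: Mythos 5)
Your setup coincides with the paper's own proof: the splitting $\psi_{2k-1}=\phi_{2k-1}-\tfrac12 B_0\phi_{2k}$, the observation that $\int_{B_0\phi_{2k}}\omega=\int_{\phi_{2k}}d\omega$, and the two tools you name (Corollary \ref{cor:cycl}\ref{cycl3} to move the trailing one-form, and the $t$-derivative identity $\int_{\phi_{2k}}\frac{d}{dt}(F_t^k)=k\int_{\phi_{2k}}(dA+2tA^2)F_t^{k-1}$ combined with the fundamental theorem of calculus, which is exactly Lemma \ref{lem:twee}) are the ingredients used there. However, there is a genuine gap at the point you gloss over. Writing $A_t=tA$, after the Bianchi identity the quantity you must compute is
\begin{align*}
YM_k:=\int_0^1\frac{dt}{2t}\int_{\phi_{2k}}\big(dA_t\,F_t^{k-1}+A_t^2F_t^{k-1}+A_tF_t^{k-1}A_t\big),
\end{align*}
and applying Corollary \ref{cor:cycl}\ref{cycl3} as you propose yields
\begin{align*}
YM_k=\int_0^1\frac{dt}{2t}\int_{\phi_{2k}}(dA_t+2A_t^2)F_t^{k-1}-\int_0^1\frac{dt}{2t}\int_{\phi_{2k+1}}d(A_t^2F_t^{k-1}).
\end{align*}
Only the first term is converted into $\frac{1}{2k}\int_{\phi_{2k}}F^k$ by the fundamental-theorem-of-calculus argument. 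The second, exact, term is the problem: it does not cancel against the $B_0\phi_{2k}$ contribution (that contribution has already been consumed, via the Bianchi identity, in forming $YM_k$), nor does it vanish for any soft reason --- $\int_\phi$ of an exact form equals $B_0\phi_{2k+1}$ evaluated on the primitive, and $\phi$ is not cyclic, so exact forms do not integrate to zero in general.

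The missing idea is precisely the statement $\int_{\phi_{2m+3}}d(A_t^2F_t^m)=0$ for all $m\geq0$, which in the paper occupies Lemmas \ref{lem:een} and \ref{lem:een b}: a two-step descent argument, concluded by induction, that uses Proposition \ref{prop:cycl}, the repeated Bianchi identity, and all three parts of Corollary \ref{cor:cycl} to lower the degree of the exact form at each stage. Without this vanishing result your expansion retains the uncontrolled remainder $-\int_0^1\frac{dt}{2t}\int_{\phi_{2k+1}}d(A_t^2F_t^{k-1})$, and the coefficient matching you flag as the delicate step (essentially $\int_0^1 t^{2k-1}dt=\frac{1}{2k}$) is in fact the easy part. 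So the proposal follows the paper's route but stops short of its crux; as written it does not establish the identity.
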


The Chern--Simons term in Proposition \ref{prop:k}, integrated along $\phi$, yields the correct Chern--Simons term integrated along $\psi$, plus an additional term. Indeed, recall that $\phi_{2k-1} = \psi_{2k-1} + \frac 12 B_0 \phi_{2k}$ so that we find
\begin{align*}
  \int_{\phi_{2k-1}}  A F_t^{k-1} +  \int_{\phi_{2k-1}} t A F_t^{k-1} A &=  \int_{\psi_{2k-1}}  A F_t^{k-1} +  \int_{\phi_{2k}} \Big( \frac 12  d(A F_t^{k-1} ) + t A F_t^{k-1} A\Big)\\
  &=  \int_{\psi_{2k-1}}  A F_t^{k-1} + \frac 12  \int_{\phi_{2k}} ( d A F_t^{k-1} + t A^2 F_t^{k-1}+ t A F_t^{k-1} A),
\end{align*}
where we used the repeated Bianchi identity $d(F_t^{k-1}) = - [A_t,F_t^{k-1}]$ in going to the last line.

We arrive at the following formula:
$$
\tr(f(D+V)-f(D)) \sim \sum_{k=1}^\infty \left( \int_{\psi_{2k-1}}  \cs_{2k-1} (A)  + \frac 12 \int_0^1 dt \int_{\phi_{2k}} ( d A F_t^{k-1} + t A^2 F_t^{k-1}+ t A F_t^{k-1} A)\right).
$$
We are now to show that the second term, namely
\begin{align*}
	YM_k:=&\frac{1}{2}\int_0^1 dt\int_{\phi_{2k}}(dA F_t^{k-1}+tA^2F_t^{k-1}+tAF_t^{k-1}A)\\
	=&\int_0^1 dt\frac{1}{2t}\int_{\phi_{2k}}(dA_t F_t^{k-1}+A_t^2F_t^{k-1}+A_tF_t^{k-1}A_t),
\end{align*}
equals $\frac{1}{2k}\int_{\phi_{2k}} F^{k}$.
After some rearrangement we can use Corollary \ref{cor:cycl}\ref{cycl3}, to find
\begin{align}
	YM_k&=\int_0^1 dt\frac{1}{2t}\int_{\phi_{2k}}(dA_t+2A_t^2) F_t^{k-1}+\int_0^1 dt\frac{1}{2t}\int_{\phi_{2k}}\left(A_tF_t^{k-1}A_t-A_t^2F_t^{k-1}\right)\nonumber\\
	&=\int_0^1 dt\frac{1}{2t}\int_{\phi_{2k}}(dA_t+2A_t^2) F_t^{k-1} - \int_0^1 dt\frac{1}{2t}\int_{\phi_{2k+1}} d(A_t^2 F_t^{k-1}).\label{eq:M}
\end{align}
We will first show that the second term of \eqref{eq:M} vanishes. We use the following rule, which allows us to replace the integrand by a form which is two degrees lower.
\begin{lem}\label{lem:een}
	For every $m\geq0$, we have
		$$\int_{\phi_{2m+3}} d(A_t^2F_t^m)=-\int_{\phi_{2m+1}} \left(d(A_t^2F_t^{m-1})+dA_td(F_t^{m-1})\right).$$
\end{lem}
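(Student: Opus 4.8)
The plan is to show that both sides of the stated identity equal $-\int_{\phi_{2m+1}} d(F_t^m)$. For the right-hand side this is immediate and purely formal: since $F_t = dA_t + A_t^2$ we have $F_t^m = dA_t\,F_t^{m-1} + A_t^2 F_t^{m-1}$, and applying $d$ (using $d^2=0$, so that $d(dA_t\,F_t^{m-1}) = dA_t\,d(F_t^{m-1})$) gives the Leibniz identity $d(F_t^m) = d(A_t^2 F_t^{m-1}) + dA_t\,d(F_t^{m-1})$. Hence the right-hand side of the lemma is exactly $-\int_{\phi_{2m+1}} d(F_t^m)$, and the whole task reduces to proving $\int_{\phi_{2m+3}} d(A_t^2 F_t^m) = -\int_{\phi_{2m+1}} d(F_t^m)$, i.e.\ a genuine descent by two degrees. (For $m=0$ one reads $F_t^{-1}$ as $0$, consistent with $d(F_t^0)=0$.)

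The key move on the left-hand side is to factor off a single leading one-form, writing $A_t^2 F_t^m = A_t\cdot(A_t F_t^m)$, where $A_t F_t^m$ is a homogeneous odd form of degree $2m+1$. Applying the identity of Corollary \ref{cor:cycl}\ref{cycl3} with the one-form $A_t$ and the odd form $X = A_t F_t^m$ converts the integral of the exact form into a commutator one degree lower, namely $\int_{\phi_{2m+3}} d(A_t^2 F_t^m) = \int_{\phi_{2m+2}}(A_t^2 F_t^m - A_t F_t^m A_t)$. Pulling $A_t$ out on the left and invoking the repeated Bianchi identity in the form $A_t F_t^m - F_t^m A_t = -d(F_t^m)$ collapses the commutator to a single term, giving $\int_{\phi_{2m+3}} d(A_t^2 F_t^m) = -\int_{\phi_{2m+2}} A_t\,d(F_t^m)$.

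It then remains to establish the one-step degree shift $\int_{\phi_{2m+2}} A_t\,d(F_t^m) = \int_{\phi_{2m+1}} d(F_t^m)$. For this I would use the Bianchi identity once more, $d(F_t^m) = F_t^m A_t - A_t F_t^m$, to rewrite the target right-hand side as $\int_{\phi_{2m+1}}(F_t^m A_t - A_t F_t^m)$, and then apply Proposition \ref{prop:cycl} with the even form $X = F_t^m$ and the odd form $Y = A_t$. Since $\odd(F_t^m)=0$ and $\odd(A_t)=1$, the proposition yields precisely $\int_\phi F_t^m A_t - \int_\phi A_t F_t^m = \int_\phi A_t\,d(F_t^m)$, which is the required identity (note the left-hand integrals pair with $\phi_{2m+1}$ and the right-hand one with $\phi_{2m+2}$). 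Combining the three displays finishes the proof.

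The main thing to get right is the very first move. A direct Leibniz/Bianchi expansion of $d(A_t^2 F_t^m)$ produces several degree-$(2m+3)$ terms, and because the near-tracial rules of Proposition \ref{prop:cycl} and Corollary \ref{cor:cycl} only ever relate $\int_{\phi_n}$ to $\int_{\phi_{n+1}}$, such manipulations tend to raise the degree and return mere consistency relations rather than the wanted two-step descent. The trick that makes everything collapse is to peel off one leading $A_t$ and use the odd-form rule of Corollary \ref{cor:cycl}\ref{cycl3} to turn the exact form into a commutator, after which Bianchi does all the remaining work; throughout, one must keep careful track of which cochain $\phi_n$ each integral pairs with, as the degree steps down from $2m+3$ to $2m+2$ to $2m+1$.
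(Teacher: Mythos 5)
Your proof is correct and is essentially the paper's own argument run in reverse: the paper starts from $\int_{\phi_{2m+1}} d(A_t^2F_t^{m-1})$ and climbs up in degree using exactly the same chain of identities (the Leibniz splitting $d(F_t^m)=d(A_t^2F_t^{m-1})+dA_t\,d(F_t^{m-1})$, the repeated Bianchi identity, Proposition \ref{prop:cycl} with $X=F_t^m$, $Y=A_t$, and Corollary \ref{cor:cycl}\ref{cycl3} with $X=A_tF_t^m$), whereas you descend from $\int_{\phi_{2m+3}} d(A_t^2F_t^m)$ and pivot both sides through the common value $-\int_{\phi_{2m+1}}d(F_t^m)$. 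The ingredients, intermediate expressions, and bookkeeping of the cochain degrees are identical, so this is the same proof in a slightly different packaging.
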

\begin{proof}
	We use the definition of $F_t$, the repeated Bianchi identity $d(F_t^m)=[F_t^m,A_t]$, and subsequently Proposition \ref{prop:cycl}, to obtain
	\begin{align*}
		\int_{\phi_{2m+1}} d(A_t^2F_t^{m-1})=&\int_{\phi_{2m+1}} \left(d(F_tF_t^{m-1})-d(dA_tF_t^{m-1})\right)\\
		=&\int_{\phi_{2m+1}} \left(d(F_t^m)-dA_td(F_t^{m-1})\right)\\
		=&\int_{\phi_{2m+1}}\left(F_t^mA_t-A_tF_t^m\right)-\int_{\phi_{2m+1}} dA_td(F_t^{m-1})\\
		=&\int_{\phi_{2m+2}} A_td(F_t^m)-\int_{\phi_{2m+1}} dA_t d(F_t^{m-1})\\
		=&\int_{\phi_{2m+2}} \left(A_tF_t^mA_t-A_t^2F_t^m\right)-\int_{\phi_{2m+1}} dA_td(F_t^{m-1}).
	\end{align*}
	Applying Corollary \ref{cor:cycl}\ref{cycl3} to the first term gives
	\begin{align*}
		\int_{\phi_{2m+1}} d(A_t^2F_t^{m-1})=& -\int_{\phi_{2m+3}} d(A_t^2F_t^m)-\int_{\phi_{2m+1}} dA_td(F_t^{m-1}),
	\end{align*}
	which implies the lemma.
\end{proof}
We obtain the following result.
\begin{lem}\label{lem:een b}
	For every $m\geq0$, we have $$\int_{\phi_{2m+3}} d(A_t^2F_t^m)=0.$$
\end{lem}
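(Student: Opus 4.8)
The plan is to prove Lemma \ref{lem:een b} by downward induction on $m$, using the recursion established in Lemma \ref{lem:een} as the inductive engine. The recursion expresses the degree-$(2m+3)$ integral $\int_{\phi_{2m+3}} d(A_t^2 F_t^m)$ in terms of two degree-$(2m+1)$ integrals, namely $-\int_{\phi_{2m+1}} d(A_t^2 F_t^{m-1})$ and $-\int_{\phi_{2m+1}} dA_t\, d(F_t^{m-1})$. The first of these is, up to sign, exactly the object one degree lower that we are trying to control, so the strategy is to chase the recursion down and show everything collapses to zero.

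First I would establish a base case. Since $A\in\Omega^1(\A)$ lives over a \emph{unital} algebra $\A$, and the Hochschild cochains $\phi_n$ vanish whenever any slot after the zeroth equals $1$, low-degree exact forms integrate to zero: in particular $\int_{\phi_1} d(\,\cdot\,)$ of a $0$-form vanishes because $B_0\phi_1$-type expressions kill constants, and more directly $\int_\phi dX$ for a total-degree-mismatch reduces trivially. Concretely, for $m=0$ the claim reads $\int_{\phi_3} d(A_t^2)=0$, and I would verify this either directly from $\int_\phi dX$ being a genuine coboundary paired against $\phi_3$, or by noting that Lemma \ref{lem:een} at $m=0$ expresses it in terms of degree-$1$ integrals of the form $\int_{\phi_1} d(\text{$0$-form})$, which vanish since $d$ of a $0$-form $x$ gives $\int_{\phi_1} dx = \phi_1(1,x)=0$ by the normalization that $\phi_n$ vanishes when an argument past the first is $1$. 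The auxiliary term $\int_{\phi_{2m+1}} dA_t\, d(F_t^{m-1})$ at $m=0$ involves $F_t^{-1}$, so I would treat $m=0$ as the genuine base of the induction where the $F_t^{m-1}$ terms are simply absent by convention.

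The inductive step is then immediate from Lemma \ref{lem:een}: assuming $\int_{\phi_{2m+1}} d(A_t^2 F_t^{m-1})=0$ (the inductive hypothesis at level $m-1$, after re-indexing), the recursion gives
\begin{align*}
	\int_{\phi_{2m+3}} d(A_t^2 F_t^m) = -\int_{\phi_{2m+1}} d(A_t^2 F_t^{m-1}) - \int_{\phi_{2m+1}} dA_t\, d(F_t^{m-1}) = -\int_{\phi_{2m+1}} dA_t\, d(F_t^{m-1}),
\end{align*}
so the whole matter reduces to showing that the correction term $\int_{\phi_{2m+1}} dA_t\, d(F_t^{m-1})$ vanishes on its own. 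Here I would again invoke the Bianchi identity $d(F_t^{m-1}) = [F_t^{m-1}, A_t]$ to rewrite $dA_t\, d(F_t^{m-1}) = dA_t F_t^{m-1} A_t - dA_t A_t F_t^{m-1}$, and then apply the near-traciality rules of Corollary \ref{cor:cycl} to collapse this difference.

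The main obstacle I anticipate is precisely the bookkeeping of this auxiliary term $\int_{\phi_{2m+1}} dA_t\, d(F_t^{m-1})$: it is not obviously zero, and naively it too might require its own descent. The cleanest route is likely to recognize $dA_t\, d(F_t^{m-1})$ as itself a total derivative or as a commutator that closes under Corollary \ref{cor:cycl}\ref{cycl2}, since both $dA_t$ and $F_t^{m-1}$ carry even degree while the bracket structure of the Bianchi identity reintroduces an odd $A_t$. I would check carefully whether the parities make $dA_t F_t^{m-1}$ and $A_t$ commute under $\int_\phi$ up to exact terms, thereby forcing the correction to vanish or to feed back into a lower instance of the very identity being proved. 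If the correction term does not vanish outright, the fallback is to fold it into a \emph{strengthened} induction hypothesis asserting simultaneously that both $\int_{\phi_{2m+3}} d(A_t^2 F_t^m)=0$ and $\int_{\phi_{2m+1}} dA_t\, d(F_t^{m-1})=0$, proving them together by the same downward recursion.
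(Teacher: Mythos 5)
Your proposal sets up the right induction on Lemma \ref{lem:een} but leaves its crux unproven, and neither of your two suggested ways of closing it works as stated. Write $I_m:=\int_{\phi_{2m+3}}d(A_t^2F_t^m)$ and $J_{m-1}:=\int_{\phi_{2m+1}}dA_t\,d(F_t^{m-1})$. A single application of Lemma \ref{lem:een} plus the hypothesis $I_{m-1}=0$ gives one equation, $I_m=-J_{m-1}$, in two unknowns. Your fallback of a strengthened induction (assuming also $J_{m-2}=0$) cannot close this: Lemma \ref{lem:een} provides no recursion expressing $J_{m-1}$ in terms of lower-index quantities, so the extra hypothesis buys nothing. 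Your primary route (Bianchi plus Corollary \ref{cor:cycl}) stops exactly where the work begins: after Bianchi, $J_{m-1}=\int_\phi(dA_tF_t^{m-1}A_t-dA_tA_tF_t^{m-1})$, and Corollary \ref{cor:cycl}\ref{cycl2} applies to neither word, since any splitting of them into two factors leaves one factor of odd degree ($A_t$, $A_tF_t^{m-1}$, or $dA_tA_t$). The gap is fillable, but it takes real work: applying Proposition \ref{prop:cycl} with $X=A_t$, $Y=d(F_t^{m-1})$ (the $X\,dY$ term dies because $d^2=0$), then Bianchi, then Corollary \ref{cor:cycl}\ref{cycl2} and \ref{cycl3}, yields the identity $J_{m-1}=-2I_{m-1}$, whence $I_m=-I_{m-1}+2I_{m-1}=I_{m-1}$ and your induction closes. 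That identity is the actual content of the proof, and the paper obtains the same effect by a different device: it applies Lemma \ref{lem:een} \emph{twice}, so that the two unknown $J$-terms cancel against each other (after recognizing $d(A_t^2F_t^{m-2})+dA_t\,d(F_t^{m-2})=d(F_t^{m-1})$), and then uses Bianchi together with Corollary \ref{cor:cycl}\ref{cycl4}, \ref{cycl2} and \ref{cycl3} to reach $I_m=I_{m-1}$ for $m\geq2$, with $m=0,1$ as base cases.

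Your base case is also mishandled. The claim $\int_{\phi_1}dx=\phi_1(1,x)=0$ ``by normalization'' is false: Hochschild cochains vanish when an argument $a_j$ with $j\geq1$ equals $1$, but here the $1$ sits in the \emph{zeroth} slot, and indeed $\phi_1(1,x)=\tr(T^D_{f^{[1]}}([D,x]))$ is generally nonzero. More fundamentally, exact forms do \emph{not} automatically integrate to zero against $\phi$ --- the $\phi_n$ are not closed graded traces, which is exactly why this lemma requires proof --- so ``$\int_\phi dX$ is a coboundary'' is not an argument. The correct direct check for $m=0$ is Corollary \ref{cor:cycl}\ref{cycl3} with $X=A_t$: then $\int_\phi d(A_t^2)=\int_\phi(A_tA_t-A_tA_t)=0$.
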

\begin{proof}
	This is easily checked when $m=0$, and when $m=1$, it follows from Lemma \ref{lem:een}. If $m\geq 2$, we apply Lemma \ref{lem:een} twice, and find
	\begin{align*}
		\int_{\phi_{2m+3}} d(A_t^2F_t^m)=&-\int_{\phi_{2m+1}} d(A_t^2F_t^{m-1})-\int_{\phi_{2m+1}} dA_td(F_t^{m-1})\\
		=&\int_{\phi_{2m-1}} d(A_t^2F_t^{m-2})+\int_{\phi_{2m-1}} dA_td(F_t^{m-2})-\int_{\phi_{2m+1}} dA_td(F_t^{m-1}).
	\end{align*}
	We recognize $F_t=A_t^2+dA_t$ in the first two terms above, so by the Bianchi identity we obtain
	\begin{align*}
		\int_{\phi_{2m+3}} d(A_t^2F_t^m)=&\int_{\phi_{2m-1}} d(F_t^{m-1})-\int_{\phi_{2m+1}} dA_td(F_t^{m-1})\\
		=&\int_{\phi_{2m-1}} (F_t^{m-1}A_t-A_tF_t^{m-1})-\int_{\phi_{2m+1}} dA_td(F_t^{m-1}).
	\end{align*}
	By Corollary \ref{cor:cycl}\ref{cycl4}, the Bianchi identity, and Corollary \ref{cor:cycl}\ref{cycl2}, this gives
	\begin{align*}
		\int_{\phi_{2m+3}} d(A_t^2F_t^m)=&\int_{\phi_{2m}} d(F_t^{m-1})A_t\\
		=&\int_{\phi_{2m}} (F_t^{m-1}A_t^2-A_tF_t^{m-1}A_t)\\
		=&\int_{\phi_{2m}} (A_t^2F_t^{m-1}-A_tF_t^{m-1}A_t).
	\end{align*}
	We apply Corollary \ref{cor:cycl}\ref{cycl3}, to find
	\begin{align*}
		\int_{\phi_{2m+3}} d(A_t^2F_t^m)=&\int_{\phi_{2m+1}} d(A_t^2F_t^{m-1}).
	\end{align*}
	By induction, it follows that $\int_\phi d(A_t^2F_t^m)=0$ for all $m$.
\end{proof}
By the above Lemma, only the first term of \eqref{eq:M} remains, namely,
\begin{align}\label{eq:last term of M}
	YM_k=&\int_0^1 dt\frac{1}{2t}\int_{\phi_{2k}}(dA_t+2A_t^2) F_t^{k-1}	=\int_0^1 dt\int_{\phi_{2k}}(\tfrac{1}{2}dA+tA^2) F_t^{k-1}.
\end{align}
To express $YM_k$ in an even simpler form, we now remove the integral over $t$, which is possible by the following lemma.
\begin{lem}\label{lem:twee}
	We have
		$$\int_0^1 dt\int_{\phi_{2k}} (\tfrac{1}{2}dA+tA^2)F_t^{k-1}=\frac{1}{2k}\int_{\phi_{2k}}F^{k}.$$
\end{lem}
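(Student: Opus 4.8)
The plan is to recognize the integrand as exactly half of the $t$-derivative of $F_t^k$, and then to integrate by the fundamental theorem of calculus. First I would observe that, since $F_t = t\,dA + t^2A^2$, we have $\frac{d}{dt}F_t = dA + 2tA^2$, so that $\tfrac12 dA + tA^2 = \tfrac12\frac{d}{dt}F_t$. The left-hand side of the lemma therefore rewrites as $\tfrac12\int_0^1 dt\int_{\phi_{2k}}\big(\frac{d}{dt}F_t\big)F_t^{k-1}$, and the whole problem reduces to identifying $\int_{\phi_{2k}}\big(\frac{d}{dt}F_t\big)F_t^{k-1}$ with $\frac1k\frac{d}{dt}\int_{\phi_{2k}}F_t^k$.

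To establish that identity I would differentiate $\int_{\phi_{2k}}F_t^k$ in $t$. Because $\phi_{2k}$ is a fixed multilinear functional and $F_t$ is a polynomial in $t$ with form-valued coefficients, the differentiation commutes with $\int_{\phi_{2k}}$, and the Leibniz rule gives $\frac{d}{dt}F_t^k = \sum_{j=0}^{k-1}F_t^j\big(\frac{d}{dt}F_t\big)F_t^{k-1-j}$. Here is the one point requiring care: in each summand the factor $\frac{d}{dt}F_t$ is sandwiched between powers of $F_t$, so I must move $F_t^j$ across it to bring every summand into the common form $\int_{\phi_{2k}}\big(\frac{d}{dt}F_t\big)F_t^{k-1}$. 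Since $F_t$ and $\frac{d}{dt}F_t$ are both two-forms, all factors have even degree, and Corollary~\ref{cor:cycl}\ref{cycl2} applies to give $\int_\phi XY = \int_\phi YX$ \emph{with no correction terms}. Applying this with $X = F_t^j$ and $Y = \big(\frac{d}{dt}F_t\big)F_t^{k-1-j}$ collapses all $k$ summands to the same quantity, yielding $\frac{d}{dt}\int_{\phi_{2k}}F_t^k = k\int_{\phi_{2k}}\big(\frac{d}{dt}F_t\big)F_t^{k-1}$.

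It then remains to integrate. Combining the previous two steps, the left-hand side equals $\tfrac12\int_0^1 \tfrac1k\frac{d}{dt}\Big(\int_{\phi_{2k}}F_t^k\Big)\,dt = \frac{1}{2k}\Big(\int_{\phi_{2k}}F_1^k - \int_{\phi_{2k}}F_0^k\Big)$ by the fundamental theorem of calculus, and since $F_0 = 0$ and $F_1 = F$ this is precisely $\frac{1}{2k}\int_{\phi_{2k}}F^k$, as claimed.

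The computation is essentially routine; the only genuine obstacle is the cyclic rearrangement in the Leibniz step, and this is exactly where it is essential that the surviving terms are all of even degree. This is why the preceding reduction (Lemma~\ref{lem:een b}, eliminating the odd-degree boundary terms $\int_\phi d(A_t^2F_t^m)$) was needed: once the odd-degree contributions are gone, the clean even-form cyclicity of Corollary~\ref{cor:cycl}\ref{cycl2} turns the whole expression into an exact $t$-derivative, in sharp contrast to the odd case where $\int_\phi$ fails to be tracial and genuine correction terms appear.
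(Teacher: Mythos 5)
Your proof is correct and is essentially identical to the paper's: both rewrite the integrand as $\tfrac12\frac{d}{dt}(F_t)\,F_t^{k-1}$, use the Leibniz rule in $\Omega^\bullet(\A)[t]$ together with the even-degree cyclicity of Corollary \ref{cor:cycl}\ref{cycl2} to obtain $\int_{\phi_{2k}}\frac{d}{dt}(F_t^{k})=k\int_{\phi_{2k}}(dA+2tA^2)F_t^{k-1}$, and conclude by the fundamental theorem of calculus for polynomials using $F_0=0$, $F_1=F$.
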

\begin{proof}
	Recall that $\Omega^\bullet(\A)[t]$ is the space of polynomials with coefficients in the algebra $\Omega^\bullet(\A)$. The linear map $\frac{d}{dt}:\Omega^\bullet(\A)[t]\rightarrow\Omega^\bullet(\A)[t]$ is defined by $\frac{d}{dt}(t^nB):=nt^{n-1}B$ for $B\in\Omega^\bullet(\A)$, and satisfies the Leibniz rule. Therefore,
	$$\frac{d}{dt}(F_t^{k})=\frac{d}{dt}(F_t)F^{k-1}_t+F_t\frac{d}{dt}(F_t)F_t^{k-2}+\ldots+F_t^{k-1}\frac{d}{dt}(F_t).$$
	Both $F_t$ and $\frac{d}{dt}(F_t)$ are $2$-forms, so, after a few applications of Corollary \ref{cor:cycl}\ref{cycl2}, we arrive at
	\begin{align*}
		\int_{\phi_{2k}}\frac{d}{dt}(F_t^{k})&=k\int_{\phi_{2k}}\frac{d}{dt}(F_t)F_t^{k-1}=k\int_{\phi_{2k}}(dA+2tA^2)F_t^{k-1}.
	\end{align*}
	The fundamental theorem of calculus (for polynomials) gives
	\begin{align*}
		\int_0^1 dt\int_{\phi_{2k}}\,(dA+2tA^2)F_t^{k-1}=&\frac{1}{k}\int_{\phi_{2k}}\int_0^1dt \,\frac{d}{dt}(F_t^{k})	=\frac{1}{k}\int_{\phi_{2k}}(F_1^{k}-F_0^{k})=\frac{1}{k}\int_{\phi_{2k}}F^{k},
	\end{align*}
	from which the lemma follows.
\end{proof}

\begin{proof}[Proof of Theorem \ref{thm:asymptotic expansion}]
Applying Lemma \ref{lem:twee} to our earlier expression for $YM_k$ (equation \eqref{eq:last term of M}), we find that
\begin{align*}
	YM_k=\frac{1}{2k}\int_{\phi_{2k}}F^{k}.
\end{align*}
We therefore obtain the theorem.
\end{proof}

\subsection{Convergence}\label{sct:convergence}
\noindent We prove a strong bound on the asymptotic expansion given by Theorem \ref{thm:asymptotic expansion}, in particular giving sufficient conditions for the series to converge, effectively replacing $\sim$ by $=$. A crucial ingredient is Lemma \ref{lem:Cs and Es bounds}.

\begin{thm}\label{thm:main bounds}
	Let $(\A,\H,D)$ be an $s$-summable spectral triple, let $n\in\N$, and fix $f\in \Esg$ for $\gamma\in(0,1]$. Then there exists $C_{f,s,n,\gamma}$ such that, for $A=\sum_{j=1}^na_jdb_j$ and $V=\sum_{j=1}^n a_j[D,b_j]$ self-adjoint with $\|a_j\|,\|b_j\|,\|[D,a_j]\|,\|[D,b_j]\|\leq R$, we have
	\begin{align}
		&\left|\tr(f(D+V)-f(D))-\sum_{k=1}^K\left(\int_{\psi_{2k-1}}\cs_{2k-1}(A)+\frac{1}{2k}\int_{\phi_{2k}}F^{k}\right)\right|\nonumber\\
		&\quad\leq \frac{C_{f,s,n,\gamma}^{K+1}}{K!^{1-\gamma}}\max(R^{2K+2},R^{4K+2+4s})\tr |(D-i)^{-s}|,\label{eq:convergence}
	\end{align}
	for all $K\in\N_0$. Moreover, we have
		$$\left|\int_{\psi_{2k-1}}\cs_{2k-1}(A)\right|+\left|\int_{\phi_{2k}}F^{k}\right|\leq \frac{C_{f,s,n,\gamma}^{k}}{k!^{1-\gamma}}\max(R^{2k},R^{4k})\tr|(D-i)^{-s}|.$$
\end{thm}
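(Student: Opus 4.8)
The plan is to derive both displayed bounds from a single estimate on $\int_\phi X$ for monomials $X$ in $A$ and $dA$, together with the remainder formula already established. First I would combine Proposition~\ref{prop:k} with the term-by-term identity of Theorem~\ref{thm:asymptotic expansion} to rewrite the quantity inside the absolute value in \eqref{eq:convergence} exactly as
\begin{align*}
  \tr\big(T^{D+V,D,\ldots,D}_{f^{[K+1]}}(V,\ldots,V)\big)-\sum_{(v,w,p)\in T_K}\frac{1}{2|v|+|w|+p+1}\int_\phi AA^{2v_1}(dA)^{w_1}\cdots A^{2v_m}(dA)^{w_m}A^{p},
\end{align*}
so that it remains to bound the trace of the multiple operator integral and the finite sum over $T_K$ separately.

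\textbf{Key monomial estimate.} For a monomial $X$ in the one-forms $A$ and the two-forms $dA$, with $\alpha$ factors of $A$ and $\beta$ factors of $dA$ and hence of degree $N=\alpha+2\beta$, I would prove
$$\Big|\int_\phi X\Big|\le \tilde C^{\,N+1}\,N!^{\gamma-1}\,R^{N+\alpha}\,\tr|(D-i)^{-s}|$$
for a constant $\tilde C=\tilde C(f,s,n)$. This is the technical heart: writing $A=\sum_j a_jdb_j$ and $dA=\sum_j da_jdb_j$ and repeatedly applying $da\,b=d(ab)-a\,db$ puts $X$ into a sum of canonical forms $x_0dx_1\cdots dx_N$; applying $\phi$ and distributing $[D,\cdot]$ over products via Leibniz yields a sum of at most $\mathrm{const}^N n^{\alpha+\beta}$ brackets $\br{W_1,\ldots,W_N}$, each slot $W_i$ a product of the operators $a_j,b_j,[D,a_j],[D,b_j]$, all of norm $\le R$. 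Since the total number of these building blocks is $2\alpha+2\beta=N+\alpha$, the product of slot norms is $\le \mathrm{const}^N R^{N+\alpha}$. Bounding each bracket by its definition \eqref{eq:br cycl} together with Lemma~\ref{lem:Cs and Es bounds} (with $V=0$), which gives $|\br{W_1,\ldots,W_N}|\le N\,C^{N+1}N!^{\gamma-1}\norm{W_1}\cdots\norm{W_N}\,\tr|(D-i)^{-s}|$ using $\norm{(D-i)^{-1}}_s^s=\tr|(D-i)^{-s}|$, and absorbing the factors $\mathrm{const}^N$, $n^{\alpha+\beta}\le n^N$ and $N$ into $\tilde C$, gives the claim. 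The same argument with $\phi$ replaced by $B_0\phi_{N+1}$ produces the identical weight $R^{N+\alpha}$ (now all $N+1$ slots are commutators $[D,\cdot]$) at bracket order $N+1$.

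\textbf{Assembling the bounds.} For the multiple operator integral I would apply Lemma~\ref{lem:Cs and Es bounds} with $n=K+1$ and all arguments equal to $V$, then insert $\norm V\le nR^2$ and $(1+\norm V)^{2s}\le(1+nR^2)^{2s}$; splitting into the cases $R\ge1$ and $R<1$ bounds $\norm V^{K+1}(1+\norm V)^{2s}$ by $\mathrm{const}^K\max(R^{2K+2},R^{2K+2+4s})\le\mathrm{const}^K\max(R^{2K+2},R^{4K+2+4s})$, with $(K+1)!^{\gamma-1}\le K!^{\gamma-1}$. For the sum over $T_K$ I would apply the monomial estimate to each term, reading off from \eqref{eq:T_K} that $N=1+2|v|+2|w|+p$ and $\alpha=1+2|v|+p$, so that $N+\alpha=2+2(2|v|+|w|+p)\in[2K+2,\,4K+2]$ (using $2|v|+|w|+p\ge K$ and $2|v|+2|w|+p\le2K$, both consequences of $(v,w,p)\in T_K$); hence $R^{N+\alpha}\le\max(R^{2K+2},R^{4K+2})$, while $N\ge K+1$ forces $N!^{\gamma-1}\le K!^{\gamma-1}$ and $|T_K|\le 2^{K+1}$ controls the number of terms. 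Adding the two contributions and enlarging the constant yields \eqref{eq:convergence}. The second inequality follows identically: after integrating the harmless polynomial in $t$, both $\cs_{2k-1}(A)=\int_0^1AF_t^{k-1}\,dt$ and $F^k$ are finite sums of monomials of degrees $2k-1$ and $2k$ with $A$-count $\alpha\le N$, so the monomial estimate and its $B_0\phi$-variant (to handle $\psi_{2k-1}=\phi_{2k-1}-\tfrac12B_0\phi_{2k}$) give $R$-powers $R^{N+\alpha}$ with exponents in $[2k,4k]$, covered by $\max(R^{2k},R^{4k})$, and factorials $\le k!^{\gamma-1}$.

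\emph{The main obstacle} is making the monomial estimate precise: in particular the bookkeeping that the operator-norm weight of every bracket arising from a degree-$N$, $\alpha$-times-$A$ monomial is of order $R^{N+\alpha}$ rather than merely $R^N$, and that the number of brackets together with the slot-combinatorics from iterated Leibniz expansions stays bounded by $\mathrm{const}^N$, so as not to spoil the factorial decay $N!^{\gamma-1}$ inherited from $f\in\Esg$. Once this is in place, matching the $T_K$ exponents to $\max(R^{2K+2},R^{4K+2+4s})$ is routine bookkeeping with the inequalities defining $T_K$.
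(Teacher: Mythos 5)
Your proposal is correct and follows essentially the same route as the paper's proof: the same decomposition into the multiple-operator-integral remainder plus the $T_K$-sum, the same reduction of monomials in $A,dA$ to canonical forms $e_0de_1\cdots de_l$ via Leibniz, the same application of Lemma \ref{lem:Cs and Es bounds} (with $V=0$ for the cochain terms, and with perturbed operator for the remainder), and the same $R$-power bookkeeping (your $R^{N+\alpha}$ is exactly the paper's $R^{2M}$). The only difference is organizational: you isolate the monomial estimate as a standalone lemma with explicit $(\alpha,\beta,N)$-counting, while the paper carries out the identical argument inline.
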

\begin{proof}
Theorem \ref{thm:asymptotic expansion} gives
\begin{align}
	&\left|\tr(f(D+V)-f(D))-\sum_{k=1}^K \left(\int_{\psi_{2k-1}}\cs_{2k-1}(A)+\frac{1}{2k}\int_{\phi_{2k}}F^{k}\right)\right|\nonumber\\
	&\quad\leq \left\|T_{f^{[K+1]}}^{D+V,D,\ldots,D}(V,\ldots,V)\right\|_1+\sum_{(v,w,p)\in T_K}\left|\int_\phi AA^{2v_1}(dA)^{w_1}\cdots A^{2v_m}(dA)^{w_m}A^{p}\right|.\label{eq:propk}
\end{align}
We first focus on the first term. Lemma \ref{lem:Cs and Es bounds} gives a $C\geq1$ such that
\begin{align*}
	\left\|T_{f^{[K+1]}}^{D+V,D,\ldots,D}(V,\ldots,V)\right\|_1&\leq \frac{C^{K+1}}{K!^{1-\gamma}}
	\sum_{j_1,\ldots,j_{K+1}\in\{1,\ldots,n\}}\prod_{m=1}^{K+1}\norm{a_{j_m}}\norm{[D,b_{j_m}]}(1+\norm{V})^{2s}\tr|(D-i)^{-s}|\\
	&\leq \frac{C^{K+1}}{K!^{1-\gamma}}n^{K+1}R^{2K+2}(1+\norm{V})^{2s}\tr|(D-i)^{-s}|,
\end{align*}
for all $K\in\N_0$. We conclude that there exists $\tilde C_{f,s,n,\gamma}$ such that
\begin{align*}
	\left\|T_{f^{[K+2]}}^{D+V,D,\ldots,D}(V,\ldots,V)\right\|_1\leq \frac{\tilde C_{f,s,n,\gamma}^{K+1}}{K!^{1-\gamma}}\max(R^{2K+2},R^{2K+2+4s})\tr|(D-i)^{-s}|.
\end{align*}

We now move on to the second term (the finite sum) on the right-hand side of \eqref{eq:propk}. It contains terms of the form
\begin{align*}
	\left|\int_\phi B_1\cdots B_M\right|,
\end{align*}
for $B_1,\ldots,B_M\in \{a_jdb_j,da_jdb_j:j\in\{1,\ldots,n\}\}$. Let $l$ be the degree of $B_1\cdots B_M$. By the definition of $T_K$ (equation \eqref{eq:T_K}) $K+1\leq l\leq 2K+1$ and $K+1\leq M\leq 2K+1$. By using the Leibniz rule repeatedly, we can write
\begin{align*}
	\int_\phi B_1\cdots B_M=\sum_{j\in J}\int_\phi e_{0,j}de_{1,j}\cdots de_{l,j},
\end{align*}
for a set $J$ with $|J|\leq 3^M\leq 3^{2K+1}$, and $e_{i,j}\in\A$ such that $e_{0,j}\cdots e_{l,j}=\prod_{m=1}^M a_{j_m}b_{j_m}$.
We get
\begin{align}
	\left|\int_\phi B_1\cdots B_M\right|&\leq\sum_{j\in J}|\int_\phi e_{0,j}de_{1,j}\cdots de_{l,j}|\nonumber\\
	&=\sum_{j\in J}|\phi_l(e_{0,j},\ldots,e_{l,j})|\nonumber\\
	&\leq \sum_{j\in J}\sum_{i=1}^l|\tr(T^D_{f^{[l]}}([D,e_i],\ldots,[D,e_l],e_0[D,e_1],[D,e_2],\ldots,[D,e_{i-1}])|,\label{eq:phi to MOI}
\end{align}
where we suppressed the index $j$ for readability.
We now apply Lemma \ref{lem:Cs and Es bounds} with $V=0$ to \eqref{eq:phi to MOI} and obtain
\begin{align*}
	\left|\int_\phi B_1\cdots B_M\right|
	&\leq  lC^{l+1}l!^{\gamma-1}\sum_{j\in J}\norm{e_0}\Big(\prod_{i=1}^l\norm{[D,e_i]}\Big)\tr|(D-i)^{-s}|,
\end{align*}
for a constant $C\geq 1$. Because we have $\norm{a_j},\norm{b_j}$, $\norm{[D,a_j]},\norm{[D,b_j]}\leq R$ by assumption, and $e_{0}\cdots e_{l}=\prod_{m=1}^M a_{j_m}b_{j_m}$, with $K+1\leq M\leq 2K+1$, we find
\begin{align*}
	\left|\int_\phi B_1\cdots B_M\right|
	&\leq \tilde C^{l+1}l!^{\gamma-1}\sum_{j\in J} R^{2M}\tr|(D-i)^{-s}|\\
	&\leq \hat C^{K+1}|J|K!^{\gamma-1}\max(R^{2K+2},R^{4K+2})\tr|(D-i)^{-s}|\\
	&\leq \check C^{K+1}K!^{\gamma-1}\max(R^{2K+2},R^{4K+2})\tr|(D-i)^{-s}|.
\end{align*}
We can now bound the second term on the right-hand side of \eqref{eq:propk}. We use that $|T_K|\leq 2^{K+1}$, and that $n^M\leq(n^2)^{K+1}$,
to find
\begin{align*}
	&\sum_{(v,w,p)\in T_K}\left|\int_\phi AA^{2v_1}(dA)^{w_1}\cdots A^{2v_m}(dA)^{w_m}A^{p}\right|\\
	&\quad\leq 2^{K+1}(n^2)^{K+1}\check C^{K+1}K!^{\gamma-1}\max(R^{2K+2},R^{4K+2})\tr|(D-i)^{-s}|\\
	&\quad\leq \breve C_{f,s,n,\gamma}^{K+1}K!^{\gamma-1}\max(R^{2K+2},R^{4K+2})\tr|(D-i)^{-s}|.
\end{align*}
Combining the first and second term of \eqref{eq:propk}, we obtain a number $C_{f,s,n,\gamma}$ such that \eqref{eq:convergence} holds. 

Moving on to the last claim of the theorem, we notice that, because $\psi_{2k-1}=\phi_{2k-1}-\frac{1}{2}B_0\phi_{2k}$,
	$$\left|\int_{\psi_{2k-1}}\cs_{2k-1}(A)\right|\leq\sum_{j\in J}\left|\int_\phi e_{0,j}de_{1,j}\cdots de_{l_j,j}\right|,$$
	where the sum is over certain $e_{i,j}\in\mathcal{A}$ (because $\mathcal{A}$ is unital) with $e_{0,j}\cdots e_{l_j,j}=\prod_{m=1}^{M}a_{j_m}b_{j_m}$ for some $M$ with $k\leq M\leq 2k-1$. The number of elements in $J$ is exponential in $k$. We obtain
	$$\left|\int_{\psi_{2k-1}}\cs_{2k-1}(A)\right|\leq \check C_{f,s,n,\gamma}^{k}k!^{\gamma-1}\max(R^{2k},R^{4k-2})\tr|(D-i)^{-s}|,$$
	for some number $\check C_{f,s,n,\gamma}\geq1$. Similarly, we obtain a number $\hat C_{f,s,n,\gamma}\geq1$ such that
	$$\left|\int_{\phi_{2k}}F^{k}\right|\leq \hat C_{f,s,n,\gamma}^{k}k!^{\gamma-1}\max(R^{2k},R^{4k})\tr|(D-i)^{-s}|,$$
	thereby proving the theorem.
\end{proof}
This theorem has two important corollaries, for $f\in\E^{s,1}$ (hence, for all $f\in\E^{s,\gamma}$) and for $f\in\E^{s,\gamma}$, $\gamma<1$.

\begin{cor}
	Let $(\A,\H,D)$ be an $s$-summable spectral triple, let $f\in\E^{s,1}$ and $V=\pi_D(A)\in\Omega^1_D(\A)_\sa$. Then there exists a $\delta>0$ such that for all $t\in\R$ with $|t|<\delta$, we have
	$$\tr(f(D+tV)-f(D))=\sum_{k=1}^\infty\left(\int_{\psi_{2k-1}}\cs_{2k-1}(tA)+\frac{1}{2k}\int_{\phi_{2k}}F_t^{k}\right),$$
	and the series converges absolutely.
\end{cor}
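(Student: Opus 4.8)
The plan is to instantiate the master estimate from Theorem \ref{thm:main bounds} with the rescaled gauge potential $tA$ and thereby control the tail of the series uniformly in $t$ for small $|t|$. First I would observe that replacing $A$ by $tA$ replaces $V=\pi_D(A)$ by $tV$, which is again self-adjoint, so the hypotheses of Theorem \ref{thm:main bounds} are met with the \emph{same} elements $a_j,b_j$ but with each relevant operator norm scaled. Writing $A=\sum_{j=1}^n a_j db_j$, the coefficients in $tA=\sum_j (ta_j)db_j$ can be bounded by taking $R'=\max(R,|t|R)$, where $R$ is a common bound on $\|a_j\|,\|b_j\|,\|[D,a_j]\|,\|[D,b_j]\|$; thus one may simply apply Theorem \ref{thm:main bounds} to the potential $tA$ with the bound $R$ replaced by $|t|R$ (absorbing a harmless factor into the constant). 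This is the crucial input: the summability $s$, the algebra, and the function $f\in\E^{s,1}$ are unchanged, so the same constant $C:=C_{f,s,n,1}$ works for all $t$.

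Next I would extract absolute convergence from the last display of Theorem \ref{thm:main bounds}. Specializing $\gamma=1$ there (so that the factorial $k!^{1-\gamma}=k!^0=1$ disappears) and applying it to $tA$, the $k$-th summand is bounded by
\begin{align*}
	\left|\int_{\psi_{2k-1}}\cs_{2k-1}(tA)\right|+\frac{1}{2k}\left|\int_{\phi_{2k}}F_t^{k}\right|\leq C^{k}\max\big((|t|R)^{2k},(|t|R)^{4k}\big)\tr|(D-i)^{-s}|.
\end{align*}
Hence, choosing $\delta>0$ so small that $C\max(\delta^2R^2,\delta^4R^4)<1$, the series is dominated term-by-term by a convergent geometric series whenever $|t|<\delta$, which yields absolute convergence.

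To upgrade this from an asymptotic statement ($\sim$) to an honest equality, I would return to the remainder bound \eqref{eq:convergence} of Theorem \ref{thm:main bounds}, again with $R$ replaced by $|t|R$ and with $\gamma=1$ (so $K!^{1-\gamma}=1$). The remainder after $K$ terms is then bounded by $C^{K+1}\max\big((|t|R)^{2K+2},(|t|R)^{4K+2+4s}\big)\tr|(D-i)^{-s}|$, which, for $|t|<\delta$ with $\delta$ chosen as above (and shrunk further if necessary so that the relevant geometric ratios are $<1$), tends to $0$ as $K\to\infty$. Therefore the truncated expansions converge to $\tr(f(D+tV)-f(D))$, establishing the stated identity with $F_t^k$ the curvature of $tA$.

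The main obstacle I anticipate is purely bookkeeping rather than conceptual: one must verify that the two competing powers of $|t|R$ appearing in the $\max$ (coming from the Chern--Simons/Yang--Mills terms of degree $2k$ versus the relative-Schatten factor $(1+\|tV\|)^{2s}$) are both suppressed by a single choice of $\delta$, and that the constant $C_{f,s,n,1}$ really is independent of $t$. Since the $\max$ is dominated by whichever power has the smaller base once $|t|R<1$, choosing $\delta<1/R$ and then small enough to beat the geometric factor $C$ handles both cases simultaneously; the $t$-independence of the constant follows because Theorem \ref{thm:main bounds} fixes $f$, $s$, $n$, and $\gamma$ but lets $R$ vary freely, so we are entitled to feed it the $t$-dependent bound $|t|R$ while keeping $C_{f,s,n,1}$ fixed.
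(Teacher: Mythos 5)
Your overall strategy --- rescale the potential, apply Theorem \ref{thm:main bounds} with $\gamma=1$ uniformly in $t$, and let geometric factors kill both the tail and the remainder --- is the same as the paper's, but your execution breaks down at the decisive step: the claim that Theorem \ref{thm:main bounds} may be applied to $tA=\sum_j(ta_j)db_j$ ``with the bound $R$ replaced by $|t|R$''. The hypothesis of that theorem requires a \emph{single} bound $R'$ dominating all four families of norms $\|a_j'\|,\|b_j'\|,\|[D,a_j']\|,\|[D,b_j']\|$ of the chosen decomposition. In your decomposition only the $a$-coefficients are scaled; $\|b_j\|$ and $\|[D,b_j]\|$ remain of size $R$, so the smallest admissible bound is $R'=\max(|t|R,R)=R$ for $|t|\leq 1$, which carries no smallness in $t$ at all --- and with $R'=R$ fixed, the theorem's bounds $C^{k}\max(R^{2k},R^{4k})$ need not decay in $k$. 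Nor can the discrepancy be ``absorbed into the constant'': the ratio between the bound you want, $(|t|R)^{2k}$, and the bound you are entitled to, $R^{2k}$, is $|t|^{-2k}$, which grows exponentially in $k$ and blows up as $t\to0$. Indeed, your claimed per-term estimate is generically \emph{false}: $\int_{\phi_{2k}}F_t^{k}$ is a polynomial in $t$ whose lowest-order monomial $t^{k}\int_{\phi_{2k}}(dA)^{k}$ has degree $k$, so no bound of order $|t|^{2k}$ as $t\to 0$ can hold unless that coefficient vanishes.

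The missing idea --- which is essentially the entirety of the paper's short proof --- is to distribute the scalar over \emph{both} coefficient families by splitting its square root:
$$tV=\sum_{j=1}^n\sqrt{|t|}\,a_j\,\big[D,\mathrm{sign}(t)\sqrt{|t|}\,b_j\big],\qquad tA=\sum_{j=1}^n\sqrt{|t|}\,a_j\, d\big(\mathrm{sign}(t)\sqrt{|t|}\,b_j\big),$$
which represents the same universal form $tA$ (by linearity of $d$) but now has all four norms bounded by $\sqrt{|t|}\,M$, where $M:=\max_j\{\|a_j\|,\|b_j\|,\|[D,a_j]\|,\|[D,b_j]\|\}$. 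Applying Theorem \ref{thm:main bounds} legitimately with $R'=\sqrt{|t|}\,M$ gives per-term bounds of order $C^{k}(|t|M^2)^{k}$ (note the correct power $|t|^{k}$, not $|t|^{2k}$) and a remainder of order $(C|t|M^2)^{K+1}$ once $\sqrt{|t|}M<1$. Choosing $\delta$ so that $\sqrt{\delta}\,M\leq 1/(C_{f,s,n,1}+1)$ (the paper sets $R:=1/(C_{f,s,n,1}+1)$ and $\delta:=(R/M)^2$) makes both geometric; the rest of your argument --- absolute convergence of the tail plus vanishing of the remainder yields the equality --- then goes through unchanged.
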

\begin{proof}
	Write $V=\sum_{j=1}^na_j[D,b_j]$. First take $C_{f,s,n,1}\geq 1$ from Theorem \ref{thm:main bounds}, define $R:=1/(C_{f,s,n,1}+1)$ such that $C_{f,s,n,1}R<1$, and define $\delta:= \left(\frac{R}{\max_j\{\|a_j\|,\|b_j\|,\|[D,a_j]\|,\|[D,b_j]\|\}}\right)^2$. By writing
	$$tV=\sum_{j=1}^n\sqrt{|t|}a_j[D,\text{sign}(t)\sqrt{|t|}b_j],$$
	the corollary follows.
\end{proof}

\begin{proof}[Proof of Theorem \ref{thm:main thm}.]
	This follows from Theorem \ref{thm:main bounds} by taking $\gamma<1$.
\end{proof}

\section{Gauge invariance and the pairing with K-theory}\label{sct:vanishing pairing}
Since the spectral action is a spectral invariant, it is in particular invariant under conjugation of $D$ by a unitary in the algebra $\A$. More generally, in the presence of an inner fluctuation we find that the spectral action is invariant under the transformation
$$
D+V \mapsto u (D+V) u^* = D + V^u; \qquad V^u = u[D,u^*] + u V u^*.
$$
This transformation also holds at the level of the universal forms, with a gauge transformation of the form $A \mapsto A^u = u d u^* + u A u ^*$. Let us analyze the behavior of the Chern--Simons and Yang--Mills terms appearing in Theorem \ref{thm:main thm} under this gauge transformation, and derive an interesting consequence for the pairing between the odd $(b,B)$-cocycle $\tilde \psi$ with the odd K-theory group of $\A$.

\begin{lem}
  The Yang--Mills terms $\int_{\phi_{2k}} F^k$ with $F = dA +A^2$ are invariant under the gauge transformation $A \mapsto A^u$ for every $k \geq 1$. 
  \end{lem}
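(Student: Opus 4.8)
The plan is to first establish that the curvature transforms covariantly under $A \mapsto A^u$, namely $F^u = uFu^*$ where $F^u := dA^u + (A^u)^2$ is the curvature of $A^u = u\,du^* + uAu^*$, and then to reduce the claimed invariance to the near-tracial behaviour of $\int_\phi$ recorded in Corollary \ref{cor:cycl}\ref{cycl2}.

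For the covariance, I would work entirely in $\Omega^\bullet(\A)$ and exploit only $d^2 = 0$, the graded Leibniz rule, and the unitarity relations $uu^* = u^*u = 1$. Differentiating $uu^* = 1$ and $u^*u = 1$ yields $u\,du^* = -du\,u^*$ and $du^*\,u = -u^*\,du$, which let me collapse every internal $u^*u$ or $uu^*$ that appears after expanding products. Setting $\omega := u\,du^*$ for the pure gauge part, a short computation gives $\omega^2 = -d\omega$ (the curvature of a pure gauge vanishes). Expanding
\begin{align*}
  F^u = d(A^u) + (A^u)^2 = \big(d\omega + du\,A\,u^* + u\,dA\,u^* - uA\,du^*\big) + \big(\omega^2 + \omega\,uAu^* + uAu^*\,\omega + uA^2u^*\big),
\end{align*}
and substituting $\omega^2 = -d\omega$, $\omega\,uAu^* = -du\,A\,u^*$ and $uAu^*\,\omega = uA\,du^*$, I expect all the $d\omega$-terms, the $du\,A\,u^*$-terms and the $uA\,du^*$-terms to cancel in pairs, leaving exactly $u(dA + A^2)u^* = uFu^*$.

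Given the covariance, the same internal cancellations give $(F^u)^k = uF^k u^*$. Since $F^k$ has even degree $2k$ while $u$ and $u^*$ have degree $0$, both $uF^k$ and $u^*$ are even forms, so Corollary \ref{cor:cycl}\ref{cycl2} applies and yields
\begin{align*}
  \int_{\phi_{2k}} (F^u)^k = \int_\phi (uF^k)\,u^* = \int_\phi u^*\,(uF^k) = \int_\phi (u^*u)\,F^k = \int_{\phi_{2k}} F^k,
\end{align*}
using $u^*u = 1$ in the last step. This is the asserted invariance.

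The only genuinely delicate part is the sign bookkeeping in the covariance computation; the integral step is immediate once one observes that every factor involved ($uF^k$ and $u^*$) has even degree, so that the even-even commutativity of Corollary \ref{cor:cycl}\ref{cycl2} — rather than any full cyclicity — suffices, and none of the correction terms of the kind seen in Corollary \ref{cor:cycl}\ref{cycl3}--\ref{cycl4} arise.
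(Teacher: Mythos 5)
Your proposal is correct and follows essentially the same route as the paper: the paper's proof likewise consists of noting that the curvature of $A^u$ is $uFu^*$ and then invoking Corollary \ref{cor:cycl}\ref{cycl2} for the two even forms $uF^k$ and $u^*$. The only difference is that you spell out the covariance computation $F^u = uFu^*$ (which the paper takes as known), and your computation of it is accurate.
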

\proof
Since the curvature of $A^u$ is simply given by $u F u^*$, the claim follows 
from Corollary \ref{cor:cycl}\ref{cycl2}.
\endproof


We are thus led to the conclusion that the Chern--Simons forms are gauge invariant as well. Indeed, arguing as in \cite{CC06}, since both $\tr f(D+V)$ and the Yang-Mills terms are invariant under $V \mapsto V^u$, we find that, under the assumptions stated in Theorem \ref{thm:main thm}:
$$
\sum_{k=0}^\infty \int_{\psi_{2k+1}} \cs_{2k+1}(A^u ) =  
\sum_{k=0}^\infty \int_{\psi_{2k+1}} \cs_{2k+1} (A ).
$$
Each individual Chern--Simons form behaves non-trivially under a gauge transformation. Nevertheless, it turns out that we can conclude, just as in \cite{CC06}, that the pairing of the whole $(b,B)$-cocycle with K-theory is trivial. 
Since the $(b,B)$-cocycle $\tilde \psi$ is given as an infinite sequence, we should first carefully study the analytical behavior of $\tilde \psi$. In fact, we should show that it is an {\em entire cyclic cocycle} in the sense of \cite{C88a} (see also \cite[Section IV.7.$\alpha$]{C94}). For this purpose, we can without loss of generality assume that $\A$ is complete in the Banach algebra norm defined by $\|a\|_1:=\|a\|+\|[D,a]\|$, because $(\overline{\A}^{\|\cdot\|_1},\H,D)$ is also a spectral triple, and the resulting $\tilde\psi_{2k+1}\in \mathcal{C}^{2k+1}(\overline{\A}^{\|\cdot\|_1})$ is an extension of the one in $\mathcal{C}^{2k+1}(\A)$. Recall that for Banach algebras $\A$ an odd cochain such as $\tilde \psi$ is called entire if the power series $\sum_k  \frac{(2k+1)!}{k!}\| \tilde \psi_{2k+1} \| z^k$ converges everywhere in $\C$. This is equivalent \cite[Remark IV.7.7a,c]{C94} to the condition that for any bounded subset $\Sigma \subset \A$ there exists a constant $C_\Sigma$ so that
$$
\left|\tilde \psi_{2k+1} (a_0, \ldots, a_{2k+1}) \right|  \leq \frac{C_\Sigma}{ k!} \qquad (\forall a_j \in \Sigma).
$$
In our case it turns out that Lemma \ref{lem:Cs and Es bounds} implies the following growth condition, guaranteeing that indeed $\tilde\psi$ is entire.

\begin{lem}
	Fix $f\in\Esg$ for $\gamma<1$ and equip $\A$ with the norm $\| a \|_1 = \| a \| + \| [D,a]\|$. Then, for any bounded subset $\Sigma\subset\mathcal{A}$ there exists $C_\Sigma$ such that
		$$\left |\tilde{\psi}_{2k+1}(a_0,\ldots, a_{2k+1}) \right|\leq \frac{C_\Sigma}{k!},$$
	for all $a_j\in\Sigma$.
\end{lem}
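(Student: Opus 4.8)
The plan is to unwind the two definitions $\tilde\psi_{2k+1}=(-1)^{k}\frac{k!}{(2k+1)!}\psi_{2k+1}$ and $\psi_{2k+1}=\phi_{2k+1}-\tfrac12 B_0\phi_{2k+2}$, to bound the two Hochschild cochains $\phi_{2k+1}$ and $B_0\phi_{2k+2}$ separately, and then to reassemble. Since $\A$ carries the norm $\|a\|_1=\|a\|+\|[D,a]\|$, a bounded subset $\Sigma$ admits an $R\geq1$ with $\|a\|_1\leq R$ for all $a\in\Sigma$, and the target estimate $|\tilde\psi_{2k+1}(a_0,\dots,a_{2k+1})|\leq C_\Sigma/k!$ is equivalent to showing that $k!\,|\tilde\psi_{2k+1}|$ stays bounded in $k$, uniformly over $a_j\in\Sigma$.

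First I would bound a single $\phi_n$. By Definition \ref{def:bracket}, $\phi_n(a_0,\dots,a_n)=\br{a_0[D,a_1],[D,a_2],\dots,[D,a_n]}$ is a sum of $n$ traces of multiple operator integrals, so $|\tr X|\leq\|X\|_1$ together with Lemma \ref{lem:Cs and Es bounds} applied with $V=0$ gives
\[
|\phi_n(a_0,\dots,a_n)|\leq n\,C^{n+1}n!^{\gamma-1}\,\|a_0[D,a_1]\|\,\prod_{j=2}^n\|[D,a_j]\|\,\|(D-i)^{-1}\|_s^s.
\]
Using $\|a_0[D,a_1]\|\leq\|a_0\|_1\|a_1\|_1$ and $\|[D,a_j]\|\leq\|a_j\|_1$, the product of norms is $\leq R^{n+1}$. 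The same estimate applied to $B_0\phi_{2k+2}(a_0,\dots,a_{2k+1})=\phi_{2k+2}(1,a_0,\dots,a_{2k+1})=\br{[D,a_0],[D,a_1],\dots,[D,a_{2k+1}]}$ gives a fully analogous bound of order $(2k+2)\,C^{2k+3}(2k+2)!^{\gamma-1}R^{2k+2}$. Since $\gamma-1<0$ and $(2k+2)!\geq(2k+1)!$, both $|\phi_{2k+1}|$ and $|B_0\phi_{2k+2}|$ are dominated by a common expression $Q(k)\,C^{2k+3}(2k+1)!^{\gamma-1}R^{2k+2}\|(D-i)^{-1}\|_s^s$ for a fixed polynomial $Q$, and hence so is $|\psi_{2k+1}|$.

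Finally I would assemble. From $\tilde\psi_{2k+1}=(-1)^k\tfrac{k!}{(2k+1)!}\psi_{2k+1}$ we get
\[
k!\,|\tilde\psi_{2k+1}(a_0,\dots,a_{2k+1})|=\frac{(k!)^2}{(2k+1)!}\,|\psi_{2k+1}(a_0,\dots,a_{2k+1})|\leq \frac{(k!)^2}{(2k+1)!}\,Q(k)\,C^{2k+3}(2k+1)!^{\gamma-1}R^{2k+2}\|(D-i)^{-1}\|_s^s.
\]
The combinatorial identity $(k!)^2\leq(2k+1)!$ then cancels the dangerous factorial and leaves $Q(k)\,C^{2k+3}R^{2k+2}(2k+1)!^{\gamma-1}\|(D-i)^{-1}\|_s^s$. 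Because $\gamma<1$ strictly, the factor $(2k+1)!^{\gamma-1}$ decays super-exponentially and therefore overwhelms the polynomial-times-geometric growth $Q(k)C^{2k+3}R^{2k+2}$, so the right-hand side is bounded (indeed tends to $0$) as $k\to\infty$; setting $C_\Sigma:=\sup_k k!\,|\tilde\psi_{2k+1}|$ proves the lemma.

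The main obstacle, and the only genuinely quantitative point, is this last competition of growth rates: one must check that the $(2k+1)!$ produced by the normalization of $\tilde\psi$ is not fully consumed by the $(k!)^2$ coming from squaring the $\phi$-bound, but leaves a residual factorial decay. The clean way to see this is the bound $(k!)^2\leq(2k+1)!$, after which the strict inequality $\gamma<1$ is exactly what guarantees the estimate; this is also why the borderline case $\gamma=1$ is excluded here and is instead handled, only for sufficiently small perturbations, in the preceding corollary.
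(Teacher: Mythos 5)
Your proposal is correct and follows essentially the same route as the paper: decompose $\tilde{\psi}_{2k+1}$ via $\psi_{2k+1}=\phi_{2k+1}-\tfrac12 B_0\phi_{2k+2}$, bound each of the cyclically summed traces by Lemma \ref{lem:Cs and Es bounds} with $V=0$ to get a factor $(2k+1)!^{\gamma-1}R^{2k+2}$ (up to polynomial and geometric factors), and then let the prefactor $\tfrac{k!}{(2k+1)!}$ together with $\gamma<1$ absorb everything. Your use of $(k!)^2\leq(2k+1)!$ is just a cleaner substitute for the paper's appeal to Stirling's approximation in the final step.
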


\begin{proof}
	Assume that $\| a_j \|_1 \leq R$ for all $a_j \in \Sigma$ so that both $\|a_j\|,\|[D,a_j]\|\leq R$. By definition of $\phi$, the expression $\psi_{2k+1}(a_0,\ldots,a_{2k+1})$ is given by a linear combination of multiple operator integrals with arguments in $\{V\in\mB(\H):\|V\|\leq R\}$ except for $a_0[D,a_1]$, which is bounded by $R^2$. By applying Lemma \ref{lem:Cs and Es bounds}, we obtain the estimate
	\begin{align}\label{eq:bound psi}
	\left	|\psi_{2k+1}(a_0,\ldots,a_{2k+1}) \right|\leq\bigg((2k+1)\frac{C^{2k+2}}{(2k+1)!^{1-\gamma}}+(k+1)\frac{C^{2k+3}}{(2k+2)!^{1-\gamma}}\bigg)R^{2k+2}\|(D-i)^{-1}\|_{s}^s.
	\end{align}
	We recall from Proposition \ref{prop:bB} that
	\begin{align}\label{tilde psi reprise}
		\tilde{\psi}_{2k+1}=(-1)^{k}\frac{k!}{(2k+1)!}\psi_{2k+1},
	\end{align}
	so that \eqref{eq:bound psi} in particular implies the lemma by use of, for instance, Stirling's approximation.
\end{proof}
For $u\in M_q(\A)$, define a pairing
\begin{align}\label{eq:pairing}
	\langle u,\tilde\psi\rangle:=(2\pi i)^{-1/2}\sum_{k=0}^\infty (-1)^{k}k!\tilde{\psi}^q_{2k+1}(u^*,u,\ldots,u^*,u),
\end{align}
where $\tilde{\psi}^q_{2k+1}:=\tr \# \tilde\psi_{2k+1}:(\mu_0\otimes a_0,\ldots,\mu_{2k+1}\otimes a_{2k+1})\mapsto\tr(\mu_0\cdots\mu_{2k+1})\tilde\psi_{2k+1}(a_0,\ldots,a_{2k+1})$ for $\mu_0,\ldots,\mu_{2k+1}\in M_q(\C)$ and $a_0,\ldots,a_{2k+1}\in\A$. Since $\tilde \psi$ is a $(b,B)$-cocycle, it follows from \cite[Corollary IV.7.27]{C94} (see also \cite[Sections III.3 and IV.7]{C94}) that this pairing only depends on the class of $u$ in $K_1(\A)$.

\begin{thm}
  Let $f\in\Esg$ for $\gamma<1$. Then the pairing of the odd entire cyclic cocycle $\tilde \psi$ with $K_1(\A)$ is trivial, {\em i.e.}
  $$\langle u,\tilde\psi\rangle=0
  $$
  for all unitary $u\in M_q(\A)$.
\end{thm}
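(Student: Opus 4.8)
The plan is to substitute the flat connection $u\,du^*$ into the gauge invariance of the total Chern--Simons term established above, and then recognize the result as the pairing \eqref{eq:pairing}. Since $\langle u,\tilde\psi\rangle$ only depends on the class of $u$ in $K_1(\A)$, and such classes are represented by unitaries $u\in M_q(\A)$, I first pass to the amplified spectral triple $(M_q(\A),\H\otimes\C^q,D\otimes 1_q)$. Its resolvent $(D-i)^{-1}\otimes 1_q$ is again in $\S^s$, and its associated odd cocycle is exactly $\tilde\psi^q=\tr\#\,\tilde\psi$, so the gauge invariance derived above applies verbatim at the matrix level. Specializing $A\mapsto A^u=u\,du^*+uAu^*$ to $A=0$ yields the pure gauge $A^u=u\,du^*$, and since $\cs_{2k+1}(0)=0$ the invariance reads, for every unitary $v\in M_q(\A)$,
$$\sum_{k=0}^\infty\int_{\psi^q_{2k+1}}\cs_{2k+1}(v\,dv^*)=0.$$

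Next I evaluate each term on the pure gauge. From $d(vv^*)=0=d(v^*v)$ one gets $dv\,v^*=-v\,dv^*$ and $(v\,dv^*)^2=-dv\,dv^*$, so the curvature $F=d(v\,dv^*)+(v\,dv^*)^2$ vanishes and $F_t=(t-t^2)\,dv\,dv^*$. Hence
$$\cs_{2k+1}(v\,dv^*)=\Big(\int_0^1(t-t^2)^k\,dt\Big)\,v\,dv^*(dv\,dv^*)^k=\frac{(k!)^2}{(2k+1)!}\,v\,dv^*(dv\,dv^*)^k,$$
where the Beta integral $\int_0^1 t^k(1-t)^k\,dt=B(k+1,k+1)$ produces the displayed factor. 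Integrating along $\psi^q_{2k+1}$ converts this universal form into $\psi^q_{2k+1}(v,v^*,\ldots,v^*)$, and inserting the normalization $\psi_{2k+1}=(-1)^k\frac{(2k+1)!}{k!}\tilde\psi_{2k+1}$ from Proposition \ref{prop:bB} cancels the Beta factor entirely:
$$\int_{\psi^q_{2k+1}}\cs_{2k+1}(v\,dv^*)=(-1)^k k!\,\tilde\psi^q_{2k+1}(v,v^*,\ldots,v^*).$$

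Summing over $k$, the vanishing of the gauge-invariant difference becomes $\sum_{k=0}^\infty(-1)^k k!\,\tilde\psi^q_{2k+1}(v,v^*,\ldots,v^*)=0$. Taking $v=u^*$ turns the arguments into the alternating pattern $(u^*,u,\ldots,u^*,u)$ occurring in \eqref{eq:pairing}, so that the sum is exactly $(2\pi i)^{1/2}\langle u,\tilde\psi\rangle$, and I conclude $\langle u,\tilde\psi\rangle=0$.

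I expect the main obstacle to be analytic rather than algebraic: every identity above is an infinite series, and both the evaluation of the Chern--Simons sum at the (generally large) pure gauge $v\,dv^*$ and the termwise rearrangement require absolute convergence. This is supplied precisely by the standing hypothesis $f\in\Esg$ with $\gamma<1$, through Theorem \ref{thm:main bounds} and the entireness of $\tilde\psi$ established in the preceding lemma. The same normalization that makes $\tilde\psi$ entire is what forces the Beta-function coefficients to collapse onto the weights $(-1)^k k!$ of the pairing, so that the remaining bookkeeping --- the signs, the degree $2k+1$ of each form, and the matrix trace in $\tr\#\,\tilde\psi$ --- lines up exactly with the definition of $\langle\,\cdot\,,\tilde\psi\rangle$.
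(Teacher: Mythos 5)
Your proposal is correct and takes essentially the same route as the paper: both pass to the amplified triple $(M_q(\C)\otimes\A,\C^q\otimes\H,I_q\otimes D)$, use that a pure gauge potential has vanishing curvature so that only the Chern--Simons sum survives and vanishes by unitary invariance of the spectral action, and then evaluate $\cs_{2k+1}$ on the pure gauge via $\int_0^1(t-t^2)^k\,dt=\frac{(k!)^2}{(2k+1)!}$, whose factor is absorbed by the normalization of $\tilde\psi_{2k+1}$ to yield exactly the coefficients $(-1)^k k!$ of the pairing. The only cosmetic difference is that you invoke the previously displayed gauge-invariance identity for the Chern--Simons sum specialized at $A=0$ (with pure gauge $u\,du^*$, then $v=u^*$), whereas the paper re-derives the same vanishing directly from Theorem \ref{thm:main thm} applied to $A=u^*du$; these unwind to the identical argument.
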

\begin{proof}
	Apply Theorem \ref{thm:main thm} to a bigger spectral triple, namely $(\A^q,\H^{q},D^q):=(M_q(\C)\otimes \A,\C^q\otimes\H,I_q\otimes D)$. Take $A=u^*du$ for $u$ unitary in $M_q(\A)=M_q(\C)\otimes\A$.
	Clearly, then $V=u^*[D^q,u]$, and because the multiple operator integral behaves naturally with respect to tensor products, we obtain
		$$\tr(f(D^q+u^*[D^q,u])-f(D^q))=\sum_{k=0}^\infty\left(\int_{\psi^q_{2k+1}}\cs_{2k+1}(u^*du)+\frac{1}{2k+2}\int_{\phi^q_{2k+2}}F^{k+1}\right),$$
	where $F=d(u^*du)+(u^*du)^2=0$. Also notice that the left-hand side equals $\tr(f(u^*D^qu)-f(D^q))=0.$
	Therefore,
	\begin{align}\label{eq:cs is 0}
		\sum_{k=0}^\infty\int_{\psi^q_{2k+1}}\cs_{2k+1}(u^*du)=0.
	\end{align}
        From the definition of the Chern--Simons form (Definition \ref{defn:cs}) and the fact that $F_t=tdA+t^2A^2=(t-t^2)dA+t^2F=(t-t^2)du^*du$ we find that
	\begin{align*}
		\cs_{2k+1}(u^*du)&=\int_0^1dt\, (t-t^2)^{k}u^*dudu^*du\cdots du^*du,
	\end{align*}
	so that by a straightforward integration we may conclude that
	\begin{align*}
		\int_{\psi^q_{2k+1}}\cs_{2k+1}(u^*du)=\frac{k!^2}{(2k+1)!}\psi^q_{2k+1}(u^*,u,\ldots,u^*,u).
	\end{align*}
Combining this with \eqref{tilde psi reprise}, \eqref{eq:pairing} and \eqref{eq:cs is 0}, the theorem follows.
\end{proof}

\paragraph{Acknowledgements} We would like to thank Steven Lord and Fedor Sukochev for their generous hospitality during a visit in Summer 2019. We also thank them, the members in their research group, as well as the other participants in the workshop "Noncommutative Calculus and the Spectral Action" at UNSW in August 2019 for fruitful discussions. We also thank Alain Connes and Anna Skripka for useful comments. Research supported by NWO Physics Projectruimte (680-91-101).

\newcommand{\noopsort}[1]{}\def\cprime{$'$}


\begin{thebibliography}{99}

\bibitem{ACDS09}
N.~A.~Azamov, A.~L.~Carey, P.~G.~Dodds, and F.~A.~Sukochev.
Operator integrals, spectral shift, and spectral flow.
{\it Canad. J. Math.} (2) \textbf{61} (2009), 241--263.

\bibitem{CC96}
A.~H. Chamseddine and A.~Connes.
\newblock Universal formula for noncommutative geometry actions: {U}nifications
  of gravity and the {S}tandard {M}odel.
\newblock {\em Phys. Rev. Lett.} \textbf{77} (1996),  4868--4871.

\bibitem{CC97}
A.~H. Chamseddine and A.~Connes.
\newblock The spectral action principle.
\newblock {\em Commun. Math. Phys.} \textbf{186} (1997),  731--750.

\bibitem{CCM07}
A.~H. Chamseddine, A.~Connes, and M.~Marcolli.
\newblock Gravity and the {S}tandard {M}odel with neutrino mixing.
\newblock {\em Adv. Theor. Math. Phys.} (6) \textbf{11} (2007)  991--1089.

\bibitem{CD89}
Y.~Choquet-Bruhat and C.~DeWitt-Morette.
\newblock {\em Analysis, manifolds and physics. {P}art {II}}.
\newblock North-Holland Publishing Co., Amsterdam, 1989.
\newblock 92 applications.

\bibitem{CDD77}
Y.~Choquet-Bruhat, C.~DeWitt-Morette, and M.~Dillard-Bleick.
\newblock {\em Analysis, manifolds and physics}.
\newblock North-Holland Publishing Co., Amsterdam-New York-Oxford, 1977.

\bibitem{C85}
A.~Connes.
\newblock Non-commutative differential geometry.
\newblock {\em Publ. Math. IHES} \textbf{62} (1985)  257--360.

\bibitem{C88a}
A.~Connes.
\newblock Entire cyclic cohomology of {B}anach algebras and characters of
  {$\theta$}-summable {F}redholm modules.
\newblock {\em $K$-Theory} \textbf{1} (1988)  519--548.

\bibitem{C94}
A.~Connes.
\newblock {\em Noncommutative Geometry}.
\newblock Academic Press, San Diego, 1994.

\bibitem{C96}
A.~Connes.
\newblock Gravity coupled with matter and the foundation of non-commutative
  geometry.
  \newblock {\em Commun. Math. Phys.} \textbf{182} (1996)  155--176.
  
\bibitem{CC06}
A.~Connes and A.~H. Chamseddine.
\newblock {Inner fluctuations of the spectral action}.
\newblock {\em J. Geom. Phys.} \textbf{57} (2006) 1--21.

\bibitem{CS18}
A.~ Chattopadhyay and A.~ Skripka. 
\newblock {Trace formulas for relative Schatten class perturbations}.
\newblock {\em J. Funct. Anal.} (12) \textbf{274} (2018), 3377--3410.

\bibitem{CM07}
A.~Connes and M.~Marcolli.
\newblock {\em Noncommutative Geometry, Quantum Fields and Motives}.
\newblock AMS, Providence, 2008.

\bibitem{CP}
A. Carey and J. Phillips.
\newblock {Unbounded {F}redholm modules and spectral flow}
\newblock {\em Can. J. Math.} (4) \textbf{50} (1998), 673--718.

\bibitem{GS89}
E.~Getzler and A.~Szenes.
\newblock On the {C}hern character of a theta-summable {F}redholm module.
\newblock {\em J. Funct. Anal.} (2) \textbf{84} (1989)  343--357.

\bibitem{Hig06}
N.~Higson.
\newblock The residue index theorem of {C}onnes and {M}oscovici.
\newblock In {\em Surveys in noncommutative geometry}, volume~6 of {\em Clay
  Math. Proc.}, pages 71--126. Amer. Math. Soc., Providence, RI, 2006.

\bibitem{vNS21}
T.~D.~H.~van Nuland and A. Skripka.
\newblock {Spectral shift for relative Schatten class perturbations}.
\newblock arXiv:2102.00090 [math.FA].

\bibitem{Nak90}
M.~Nakahara.
\newblock {\em Geometry, Topology and Physics}.
\newblock IOP Publishing, 1990.

\bibitem{Pay07}
S.~Paycha.
\newblock (Second) Quantised resolvents and regularised traces.
\newblock {\em J. Geom. Phys.} (5) \textbf{57} (2007)  1345--1369.


\bibitem{PSS13}
D.~Potapov, A.~Skripka, and F.~Sukochev. 
\newblock Spectral shift function of higher order.
\newblock {\em Invent. Math.} (3) \textbf{193} (2013), 501--538.

\bibitem{Qui90}
D.~Quillen.
\newblock Chern-{S}imons forms and cyclic cohomology.
\newblock In {\em The interface of mathematics and particle physics ({O}xford,
  1988)}, volume~24 of {\em Inst. Math. Appl. Conf. Ser. New Ser.}, pages
  117--134. Oxford Univ. Press, New York, 1990.

\bibitem{Skr13}
A.~Skripka.
\newblock Asymptotic expansions for trace functionals.
\newblock {\em J. Funct. Anal.} (5) \textbf{266} (2014)  2845--2866.

\bibitem{ST19}
A. Skripka and A. Tomskova.
 {\it Multilinear Operator Integrals: Theory and Applications.} Lecture Notes in Math. 2250, Springer International Publishing, 2019, XI+192 pp.

\bibitem{Sui11}
W.~D. {\noopsort{suijlekom}}van Suijlekom.
\newblock Perturbations and operator trace functions.
\newblock {\em J. Funct. Anal.} (8) \textbf{260} (2011)  2483--2496.

\bibitem{Sui14}
W.~D. {\noopsort{suijlekom}}van Suijlekom.
\newblock {\em Noncommutative Geometry and Particle Physics}.
\newblock Springer, 2015.
\end{thebibliography}

\end{document}